\let\norm\undefined 
\DeclarePairedDelimiter\norm{\lVert}{\rVert}
\numberwithin{equation}{section}
\DeclareMathOperator{\conv}{conv}
\DeclareMathOperator{\dist}{dist}
\DeclareMathOperator{\ran}{ran}
\DeclareMathOperator{\supp}{supp}
\DeclareMathOperator{\maxsupp}{maxsupp}
\DeclareMathOperator{\minsupp}{minsupp}
\newcommand{\andd}{\,\,\,\text{and}\,\,}
\newcommand{\N}{\mathbb{N}}
\newcommand{\Q}{\mathbb{Q}}
\newcommand{\e}{\epsilon}
\newcommand{\mc}{\mathcal}
\newcommand{\tree}[1][A]{\mathcal{#1}}
\newtheorem{theorem}{Theorem}[section]
\newtheorem{lemma}[theorem]{Lemma}
\newtheorem{remark}[theorem]{Remark}
\newtheorem{remarks}[theorem]{Remarks}
\newtheorem{definition}[theorem]{Definition}
\newtheorem{proposition}[theorem]{Proposition}
\newtheorem{corollary}[theorem]{Corollary}
\newtheorem{notation}[theorem]{Notation}
\newtheorem*{claim}{\textit{Claim}}
\newtheorem{problem}{Problem}
\newcommand{\wmt}{W_{mT}}
\newcommand{\bxk}[1][x]{ (#1_{k})_{k\in\N}
}
\begin{document}


\title{The HI extension of the standard HI spaces}


\author{Spiros A. Argyros}
\address{National Technical University of Athens, Faculty of Applied Sciences, Department of Mathematics, Zografou Campus, 157 80, Athens, Greece.}
\email{sargyros@math.ntua.gr}


\author{Antonis Manoussakis}
\address{School of Environmental Engineering, Technical University of Crete, University Campus, 73100 Chania, Greece}
\email{amanousakis@isc.tuc.gr}
\author{Pavlos Motakis }
\address{Department of Mathematics and Statistics, York University, 4700 Keele Street, Toronto, Ontario, M3J 1P3, Canada}
\email{pmotakis@yorku.ca}
\thanks{{\em 2010 Mathematics Subject Classification:} Primary 46B03, 46B06.}



\begin{abstract}
A Hereditarily Indecomposable (HI)  Banach space $X$ admits an HI
extension if there exists an HI space $Z$ such that $X$ is isomorphic
to a subspace $Y$ of $Z$ and $Z/Y$ is of infinite dimension. The
problem whether or not every HI  space admits an HI extension is attributed to A. Pelczynski. In this paper we present a method to define
HI-extensions of the standard HI spaces, a class which includes
the Gowers-Maurey space, asymptotic $\ell_{p}$-HI spaces and others.
\end{abstract}
 \maketitle


 \section{Introduction}\label{sintro}
 The problem of the HI extension of an HI Banach space  can be considered as
 part of a general problem  which concerns the embedding of Banach
 spaces into a space with tight structure. There are several results
 related to this problem with the more representative being the following:
 The first one stated and proved  in \cite{ARaik} asserts that every
 separable reflexive Banach space is isomorphic to a  subspace of a
 reflexive indecomposable Banach space. The second one is similar to
 the previous result where the superspace admits very few operators. In
 particular every separable reflexive Banach space is isomorphic to a
 subspace of a $\mc{L}_{\infty}$ space which satisfies the
 ``scalar-plus-compact'' property. This result is presented in
 \cite{AFHORSZ} for uniformly convex spaces and the general case is
 proved in an unpublished paper of the same authors.
In this direction it remains open whether every separable Banach space
not containing $c_{0}$ is embedded  into a $\mc{L}_{\infty}$ space with
very few operators.

Returning to the problem of the HI  extension
of an HI space, attributed to A. Pelczynski, we point out that this is a
delicate and hard problem. This is mainly due to the quasi-minimality
of HI Banach spaces \cite{G}. To explain the obstacle a little further, assume
that $X$ is an HI space and  $Z$ is also  HI containing isomorphically
the space $X$. Then every subspace $Y$ of $X$ and every subspace $E$
of $Z$
contain further subspaces  $D_{1}$  of $Y$ and $D_{2}$ of $E$ which are
isomorphic. This means that   for a possible construction of an HI
superspace $Z$ of $X$ we ought to extract the structure of the space $X$
in the space $Z$ and this is hard,  in particular when we do not have a
precise description of the structure of the space $X$.

As it stands, there exists a specific type of HI space for which the answer to Pelczynski’s problem is known to be affirmative. In \cite{CFG}, J. M. F. Castillo, V. Ferenczi and M. Gonzalez proved that Ferenczi's uniformly convex HI space $\mathcal{F}$ from \cite{F} admits an HI extension $\mathcal{F}_{2}$. The space $\mathcal{F}$ is the outcome of a complex interpolation scheme, and this is an unavoidable component of the definition of $\mathcal{F}_{2}$. As such, the method from \cite{CFG} is not applicable to the more standard HI spaces defined via the method of norming sets. The aim of this paper is to exhibit a method that permits us to define HI extensions of the latter class of HI spaces, which includes the first generation of HI spaces such as the Gowers-Maurey HI space, \cite{GM}, the Argyros-Deliyanni
asymptotic $\ell_{1}$ HI space, \cite{AD}, and the Deliyanni-Manoussakis asymptotic $\ell_{p}$-HI
spaces, \cite{DM}. We have  chosen to apply the method on a variant
of  Gowers-Maurey space, denoted by $X_{hi}$, with its norming set
$W_{hi}$ being a subset of the norming set of the mixed Tsirelson space
$T[(\mc{A}_{n_{j}}, m_{j}^{-1})_{j\in\N}]$.

The norming set of any HI space which has been defined in the
literature  has two  inevitable  characteristics. The first  one is
some  unconditional structure which is usually  imposed by a  mixed
Tsirelson space.  For example in Gowers-Maurey space, Schlumprecht
space \cite{Sch} plays that role. The second one is the conditional
structure which is responsible  for the tight structure of the HI
space and it is traced in the earlier work of B. Maurey and
H. P. Rosenthal \cite{MR}. In the case of $X_{hi}$ the norming set
$W_{hi}$ is defined as follows: The set $W_{hi}$ is a subset of
$c_{00}(\N)$ which is minimal with respect to the following properties.
\begin{enumerate}[leftmargin=21pt,label=(\roman*)]

\item  It is rationally convex, symmetric, closed on the projections on
intervals  of $\N$ and contains $\pm e_{n}^{*}$, $n\in\N$.

\item It is closed in all $(\mc{A}_{n_{2j}},m_{2j}^{-1})$
operations. That means that for every  family $(f_{i})_{i=1}^{n_{2j}}$
of elements of $W_{hi}$ with successive supports  the functional
$f=m_{2j}^{-1}\sum_{i=1}^{n_{2j}}f_{i}\in W_{hi}$. In this case we say that  $f$ is a weighted functional with weight $w(f)$ of $f$ equal to $m_{2j}$.( Note that this is the
unconditional structure mentioned above).

\item It includes all $f=m_{2j-1}^{-1}\sum_{i=1}^{n_{2j-1}}f_{i}$ where
$(f_{i})_{i=1}^{n_{2j-1}}$ is a $(2j-1)$-special sequence. For such a functional we set  $w(f)=m_{2j-1}$.

\end{enumerate}

$(2j-1)$-special sequences are sequences $(f_{i})_{i=1}^{n_{2j-1}}$ of
successive weighted functionals which are defined through a coding
function. Their important property is a tree-like property, namely
for every $i>1$ the weight $w(f_{i})=m_{2j_{i}}$ of $f_{i}$   uniquely determines the previous elements of the sequence.
For a precise  definition of the $(2j-1)$-special sequences see
Definition~\ref{sseq}. This type of functionals imposes the HI
structure of the space $X_{hi}$.

We now pass to describe the HI extension of the space $X_{hi}$ mentioned above. Our approach is the natural one. Namely, we shall define a new norming set $W_{ex}$ such that the induced norm on $c_{00}(\mathbb{N})$ is an HI one and also the completion of the subspace generated by the even coordinates $(e_{2n})_{n\in\mathbb{N}}$ is isometric to the initial space $X_{hi}$. For this, we let $\widehat W_{hi} =  \{\hat g: g\in W_{hi}\}$ where for $g\in W_{hi}$ and $n\in\mathbb{N}$, $\hat g(2n) = g(n)$ and $\hat g(2n-1) = 0$. Then, $W_{ex}$ is the norming set that satisfies the following two conditions:
\begin{enumerate}[leftmargin=17pt,label=(\alph*)]

\item The norm induced by $W_{ex}$ is an HI one.

\item $W_{ex}|_{\mathbb{N}_{ev}} = \widehat W_{hi}$, where $\mathbb{N}_{ev}$ is the set of even positive integers and $g|_{\mathbb{N}_{ev}} = g\cdot 1_{\mathbb{N}_{ev}}$.

\end{enumerate}

We set   $X_{ex}$ the completion of   $ (c_{00}(\mathbb{N}),\|\cdot\|_{W_{ex}})$.
Condition (b) yields that the subspace  $X_{ev} = \overline{\langle e_{2n}:n\in\mathbb{N}\rangle}$   of  $X_{ex}$  is isometric to  $X_{hi}$. In the sequel we denote the subspace $X_{ev}$  by  $\widehat X_{hi}$. Let us also observe that since the space $X_{ex} $ is an HI space, the set $\widehat W_{hi}$ cannot be a subset of $W_{ex}$. Otherwise, the subspace  $\widehat X_{hi}$ would be complemented in $X_{ex}$. Moreover, since $\|\cdot\|_{W_{ex}}$ is HI, the norming set  $W_{ex}$ must include  special functionals which are entirely responsible for the HI structure. At the same time, for every special functional $f\in W_{ex}$, $f|_{\mathbb{N}_{ev}} = \hat g$ with $g\in W_{hi}$. This means that the added special functional in $W_{ex}$ does not contribute to  $\widehat X_{hi}$  norm beyond the elements of $\widehat W_{hi}$. To satisfy the previously explained two necessary requirements, we introduce paired special functionals that  will be described below. The norming set $W_{ex}$ is defined to satisfy the following  properties.
\begin{enumerate}[leftmargin=21pt,label=(\roman*)]

\item It contains $(e_n^*)_{n\in\mathbb{N}}$, it is symmetric, closed under rational convex combinations, and the restrictions on intervals of $\mathbb{N}$.

\item It is closed under the even operations $(\mathcal{A}_{n_{2j}},m_{2j}^{-1})_{j}$.

\item $W_{ex}|_{\mathbb{N}_{ev}} = \widehat W_{hi}$.

\item It contains the paired special functionals.
\end{enumerate}

The paired special functionals are of the form $f = m_{2j+1}^{-1}\sum_{k=1}^{n_{2j+1}}(f_k+\hat g_k)$ such that $\supp f_k\subset \mathbb{N}_{od}$, the set of the odd positive integers, $w(f_k) = w(g_k)$, and $m_{2j+1}^{-1}\sum_{k=1}^{n_{2j+1}}g_k$ is a special functional in the norming set $W_{hi}$. The definition of the paired special sequence $f_1,\hat g_1,\ldots,f_{n_{2j+1}},\hat g_{n_{2j+1}}$ uses a new coding function so that the tree-like property is also satisfied. Notice that the paired special functionals satisfy the requirement that $f|_{\mathbb{N}_{ev}} = \hat g$ with $g\in W_{hi}$. The HI property of the space $X_{ex}$ follows from the following criterion from \cite[Lemma 7.1]{CFG} (see Proposition \ref{hi-1}): the subspace $\widehat X_{hi}$ is HI and the quotient map  $Q: X_{ex}\to X_{ex}/ \widehat X_{hi}$ is a strictly singular operator. The latter is a consequence of the next proposition (Proposition \ref{hi-2}).

\begin{proposition}
For every infinite dimensional subspace $Y$ of $X_{ex}$ we have that  $\mathrm{dist}(S_Y,S_{\widehat X_{hi}}) = 0$.
\end{proposition}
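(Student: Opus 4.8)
The goal is to show that every infinite-dimensional subspace $Y$ of $X_{ex}$ contains, for every $\varepsilon>0$, a norm-one vector that is $\varepsilon$-close to a norm-one vector in $\widehat X_{hi}$. The plan is to produce inside $Y$ a normalized vector $y$ whose behavior is controlled by the even-coordinate part: concretely, I would first use a standard sliding-hump / gliding argument to find a normalized block sequence $(y_k)$ in $Y$ (with respect to the basis $(e_n)_{n\in\mathbb N}$ of $X_{ex}$), and then decompose each $y_k = y_k^{od} + y_k^{ev}$ into its restrictions to the odd and even coordinates. The heart of the matter is to show that one can choose the $y_k$ (or suitable averages of them) so that the odd part becomes norm-negligible relative to the even part; once $\|y_k^{od}\|$ is small compared to $\|y_k^{ev}\|$, the vector $y_k^{ev}/\|y_k^{ev}\|$ lies in $\widehat X_{hi}$ and is $\varepsilon$-close to $y_k/\|y_k\|\in S_Y$, which is exactly the assertion $\mathrm{dist}(S_Y,S_{\widehat X_{hi}})=0$.

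To control the odd part I would run the usual basic inequality / rapidly increasing sequence (RIS) machinery that is built into the standard HI framework. Given a long normalized block sequence in $Y$, pass to a RIS $(x_i)$ and consider the averaged vectors $x = m_{2j}^{-1}\sum_{i=1}^{n_{2j}} x_i$ for a large even index $2j$; the basic inequality bounds $\|x\|$ from above in terms of the behavior of functionals in $W_{ex}$ on $(x_i)$. The key structural input is property (iii), $W_{ex}|_{\mathbb N_{ev}} = \widehat W_{hi}$, together with the description of the paired special functionals: any $f\in W_{ex}$ splits as $f = f|_{\mathbb N_{od}} + \widehat g$ with $g\in W_{hi}$ (for weighted and special functionals), so the action of $f$ on the odd part of a block vector is governed by functionals supported on $\mathbb N_{od}$ whose weights come from the paired-special structure. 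Because the paired special functionals have the tree-like property of $(2j-1)$-special sequences, at most one change of weight can be exploited along a RIS, so the standard estimate forces the odd contribution of the averaged vector to be of order $m_{2j+1}^{-1}$ or smaller, i.e. negligible, while the even part still carries a definite proportion of the norm (here one uses that $X_{ev}\cong X_{hi}$ so lower $\ell_1$-type estimates on averages survive on the even side).

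I would organize the argument in the following steps: (1) reduce to a normalized block sequence $(y_k)$ in $Y$; (2) for each $k$, split $y_k$ into odd and even parts and observe $y_k^{ev}\in\widehat X_{hi}$; (3) if infinitely many $y_k$ already satisfy $\|y_k^{ev}\|\ge 1-\varepsilon$ we are done, so assume $\|y_k^{od}\|$ is bounded below along the sequence; (4) extract a RIS from $(y_k^{od})$ and form an average $x=m_{2j}^{-1}\sum_{i} y_{k_i}^{od}$ for suitably large $2j$, then apply the basic inequality together with the structure of $W_{ex}$ (only odd-supported parts of weighted and paired-special functionals act on $x$) to conclude $\|x\|\le C m_{2j}^{-1}\cdot(\text{polynomial})$, which is small; (5) by linearity, $z := m_{2j}^{-1}\sum_i y_{k_i}$ has $\|z - z^{ev}\| = \|x\|$ small, while $z^{ev}\in\widehat X_{hi}$; normalizing gives the desired approximation. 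I expect the main obstacle to be Step (4): one must verify that the paired special functionals, and in particular the interaction between the $f_k$ (supported on $\mathbb N_{od}$) and the $\widehat g_k$, genuinely fit the basic-inequality estimates — that the new coding function still yields a tree-like structure strong enough that a RIS cannot ``resonate'' with a paired special functional on the odd coordinates — so that the odd part is quantitatively suppressed by the operation of weight $m_{2j}$. This is where the precise definitions of the paired special sequences and the earlier lemmas (presumably a basic inequality adapted to $W_{ex}$) will have to be invoked carefully.
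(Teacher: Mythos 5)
Your overall strategy---make the odd-coordinate part of a vector in $Y$ norm-negligible, so that its even part, which lies in $\widehat X_{hi}$, approximates it---cannot work, and Step (4) is where it breaks. The even operations $(\mathcal{A}_{n_{2j}},m_{2j}^{-1})$ in $W_{ex}$ act on \emph{all} of $c_{00}(\mathbb{N})$, not only on the even coordinates: if $(x_i)_{i=1}^{n_{2j}}$ is a RIS of normalized vectors supported on $\mathbb{N}_{od}$ and $f_i\in W_{ex}$ norms $x_i$, then $m_{2j}^{-1}\sum_i f_i\in W_{ex}$, so $\lVert \frac{m_{2j}}{n_{2j}}\sum_i x_i\rVert\gtrsim 1$ (this is exactly the lower bound in Corollary \ref{eris}). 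Thus averaging a RIS of odd parts does not produce a small vector, and no amount of tree-structure on the paired special functionals changes this. The decisive counterexample is $Y=\overline{\langle e_{2n-1}:n\in\mathbb{N}\rangle}$: every vector in this $Y$ has zero even part, so your scheme would have to conclude that normalized vectors of $Y$ have small norm, which is absurd; yet the proposition must hold for this $Y$. The point is that $\mathrm{dist}(S_Y,S_{\widehat X_{hi}})=0$ is achieved by vectors with \emph{disjoint} supports being close in norm---the HI phenomenon---not by one support absorbing the other.

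The paper's proof runs in the opposite direction. Assuming $\mathrm{dist}(S_Y,S_{\widehat X_{hi}})>\e$, Lemma \ref{dist1} (via Hahn--Banach and $B_{X_{ex}^*}=\overline{W_{ex}}$) produces functionals in $W_{ex}$ supported on $\mathbb{N}_{od}$ that almost norm vectors of $Y$; these yield $(6/\e,2j)$-SEPs $(y,f)$ with $y\in Y$ and $\supp(f)\subset\mathbb{N}_{od}$. Each such $f$ is then fed into the coding function $\varrho$, which assigns to it a companion $g=\varrho(f)\in K^{2j}$ and hence a companion vector in $\widehat X_{hi}$; alternating these builds a $(6\e^{-1},2j-1)$-dependent \emph{paired} sequence $(x_k,f_k)_{k=1}^{2n_{2j-1}}$ with $x_{2k-1}\in Y$ and $x_{2k}\in\widehat X_{hi}$. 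The paired special functional $m_{2j-1}^{-1}\sum_k(f_{2k-1}+\widehat g_{2k})$ gives the lower bound on the sum, while the variant of the basic inequality (Corollary \ref{x-y}) gives $\bigl\lVert\frac{m_{2j-1}}{n_{2j-1}}\sum_k(x_{2k-1}-x_{2k})\bigr\rVert\le 12C/m_{2j-1}$, forcing the $Y$-vector and the $\widehat X_{hi}$-vector close and contradicting the assumption for large $j$. So the paired special functionals are used to create a conditional cancellation between a vector of $Y$ and a vector of $\widehat X_{hi}$, not to suppress the odd coordinates. You correctly identified the relevant ingredients (basic inequality, tree-like property of the paired codings), but the mechanism by which they enter is the one above, and your proof would need to be restructured around it.
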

The key ingredients for proving this result are paired special functionals.

The paper is organized into nine sections.The first three sections, after the introduction, are devoted to a brief presentation of basic ingredients and results for defining the norming set  $W_{hi}$ and proving its properties. The next section contains the definition of the norming set $W_{ex}$ of the space $X_{ex}$. In the following two sections  the proof that the space $X_{ex}$ is HI is given. In 
the last two sections we present two further HI extensions. The first one concerns a norming set $W$ which is not closed in rational convex combinations and the second one refers to HI extensions of asymptotic $\ell_1$ HI spaces.

\section{Mixed Tsirelson spaces}\label{smt}
In this section we recall the definition of a class of  mixed Tsirelson spaces and some of their properties that we shall use. We refer to \cite{ATod} for a detailed  presentation. Also we follow standard terminology as in  \cite{AlK},\cite{LT}.

Let  $(m_{j})_{j=0}^{+\infty}$, $(n_{j})_{j=0}^{+\infty}$ be two strictly increasing sequences
satisfying the  following:
\begin{enumerate}
\item  $m_{0}=2$ and $m_{j+1}=m_{j}^{5}$.
\item $n_{0}=4$ and $ n_{j+1}=(12n_{j})^{s_{j}}$
  where $2^{s_{j}}\geq m_{j+1}^{3}$.
\end{enumerate}
By $\mc{A}_{n}$ we denote the family of the finite subsets of $\N$
with at most $n$ elements, i.e.,  $\mc{A}_{n}=\{F\subset\N:\#F\leq n\}$.

\begin{definition}
1) Let  $n\in\N$. A sequence   $(f_{i})_{i=1}^{k}$, $k\leq n$, of elements  of
$c_{00}(\N)$ is said to be $\mc{A}_{n}$-admissible if
$\supp (f_{1})<\supp (f_{2})<\dots<\supp (f_{k})$.

2) The $(\mc{A}_{n_{j}}, m_{j}^{-1})$-operation on $c_{00}(\N)$ 
is the operation which assigns to each
$n_{j}$-admissible sequence $(f_{i})_{i=1}^{k}$ the vector $m_{j}^{-1}(f_{1}+f_{2}+\dots+f_{k})$.
\end{definition}
For $f\in c_{00}(\N)$ we shall write  $w(f)=m_{j}$ if $f$ is the result of an
$(n_{j},m_{j}^{-1})$- operation.

We present the norming set of  the mixed Tsirelson space defined by the
families  $\mathcal{A}_{n_{j}}$ and the ``weights'' $(m_{j})_{j}$ which
is  a discretization of the norming set of  Schlumprecht space
$\mathcal{S}$, \cite{Sch}.

\begin{definition}
  Let  $W_{mT}$ be the smallest subset of $c_{00}(\N)$  with the following properties
\begin{enumerate}
\item $\pm e^{*}_{n} \in W_{mT}$ for every $n\in\N$.
\item  $W_{mT}$ is closed in the $(\mc{A}_{n_{j}},m_{j}^{-1})$-operations, i.e., for  every $\mc{A}_{n_{j}}$-admissible  sequence
  $(f_{i})_{i=1}^{k}$ of elements of $W_{mT}$ we have that
  $m_{j}^{-1}\sum_{i=1}^{k}f_{i}\in W_{mT}$.
\item  $W_{mT}$ is symmetric
and  closed under the restriction of its elements to intervals of $\N$.
  \item  $W_{mT}$ is closed in rational convex combinations.
  \end{enumerate}
\end{definition}
The set $W_{mT}$ can  also be defined inductively as $W_{mT}=\cup_{n=0}^{\infty}W_{mT}^{n}$ where  $W_{mT}^{0}=\{\pm e^{*}_{n}:n\in\N\}$  and 
$W_{mT}^{n}$ is the smallest subset of $c_{00}(\N)$ that contains $W_{mT}^{n-1}$ and satisfies properties (2)-(3) of the above definition with respect to  the functionals of $W_{mT}^{n-1}$.

We denote  by $X_{mT}=T[(\mc{A}_{n_{j}}, m_{j}^{-1})_{j}]$ the completion of  $c_{00}(\N)$  under the norm
$\norm{\cdot}_{W_{mT}}$
induced  by the set   $W_{mT}$, i.e.,
\[
  \norm{x}_{W_{mT}}=\sup\{f(x):f\in W_{mT}  \}
\]
\begin{definition}
Let $f\in W_{mT}$. For a finite tree $\tree$, a family $(f_{\alpha})_{\alpha\in\tree}$ is
said to be a tree analysis of $f$  if the following are satisfied.
\begin{enumerate}
\item   $\tree$ has a unique root denoted by 0 and $f_{0}=f$.
  \item  For every maximal node $\alpha\in\tree$ we have that $f_{\alpha}=\pm
    e^{*}_{n_{\alpha}}$.
  \item Let $\alpha$ be a non-maximal node of $\tree$
    and denote by $S_{\alpha}$ the  set of the immediate successors of $\alpha$. Then
     either  there exists  $j\in\N$ such that 
    $(f_{\beta})_{\beta\in S_{\alpha}}$  is a $n_{j}$-admissible subset of $W_{mT}$
    and $f_{\alpha}= m_{j}^{-1}\sum_{\beta\in S_{\alpha}}f_{\beta}$
 or there exists a rational convex combination $(r_{\beta})_{\beta\in
        S_{a}}$ such that  $f_{\alpha}=\sum_{\beta\in S_{\alpha}}r_{\beta}f_{\beta}$.
  \end{enumerate}
\end{definition}
\begin{proposition}
 Every $f\in W_{mT}$ admits a tree analysis.
\end{proposition}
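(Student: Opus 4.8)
The plan is to argue by induction on the inductive construction $\wmt=\bigcup_{n}\wmt^{n}$, proving that every $f\in\wmt^{n}$ admits a tree analysis. For the base case $n=0$ a functional $f=\pm e^{*}_{m}$ is analysed by the trivial tree consisting of a single node that is at once the root and a maximal node, with $f_{0}=f$; this meets condition (2) in the definition of a tree analysis. For the inductive step I would distinguish cases according to which generating operation produces $f\in\wmt^{n}$ from functionals of $\wmt^{n-1}$.

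If $f=m_{j}^{-1}\sum_{i=1}^{k}f_{i}$ with $(f_{i})_{i=1}^{k}$ an $\mc{A}_{n_{j}}$-admissible family in $\wmt^{n-1}$, then by the inductive hypothesis each $f_{i}$ has a tree analysis $(f^{i}_{\alpha})_{\alpha\in\mathcal{A}^{i}}$, and I form a tree $\mathcal{A}$ by adjoining a new root $0$ with $f_{0}=f$ whose immediate successors are the roots of the $\mathcal{A}^{i}$; the branching at $0$ then realises the $(\mc{A}_{n_{j}},m_{j}^{-1})$-operation. The case of a rational convex combination $f=\sum_{\beta}r_{\beta}g_{\beta}$ is handled identically, with the root $0$ now a convex-combination node. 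If $f=-g$ arises from the symmetry of $\wmt$, I take a tree analysis of $g$ and negate the functional attached to every node: maximal nodes $\pm e^{*}_{m_{\alpha}}$ stay of the form $\pm e^{*}_{m_{\alpha}}$, while the identities $-f_{\alpha}=m_{j}^{-1}\sum_{\beta}(-f_{\beta})$ and $-f_{\alpha}=\sum_{\beta}r_{\beta}(-f_{\beta})$ show that both kinds of internal node survive negation, so the negated family is a tree analysis of $-g$.

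The remaining, and principal, case is the interval restriction $f=g|_{E}$. Here the plan is to take a tree analysis $(g_{\alpha})_{\alpha\in\mathcal{A}}$ of $g$, replace each attached functional by its restriction $f_{\alpha}:=g_{\alpha}|_{E}$, and prune every node $\alpha$ with $f_{\alpha}=0$ together with the subtree below it. One must then verify that the two internal-node conditions persist. At an $(\mc{A}_{n_{j}},m_{j}^{-1})$-node the identity $g_{\alpha}|_{E}=m_{j}^{-1}\sum_{\beta\in S_{\alpha}}(g_{\beta}|_{E})$ holds, and after discarding the $\beta$ with $g_{\beta}|_{E}=0$ the surviving functionals still have successive supports and number at most $n_{j}$, so they remain $\mc{A}_{n_{j}}$-admissible; since $E$ is an interval and the members of $S_{\alpha}$ have successive supports, the restriction can only truncate the two extreme overlapped members and leaves the interior ones unchanged, so this bookkeeping is routine.

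The hard part will be the behaviour of the convex-combination nodes under restriction: when $f_{\alpha}=\sum_{\beta}r_{\beta}(g_{\beta}|_{E})$ loses some children to the restriction, the surviving coefficients need no longer sum to one, so the surviving restrictions must be renormalised before they again constitute a rational convex combination, and one has to confirm that this local adjustment is compatible with the value demanded at the node and with its parent. Controlling this interaction, and checking that the collection of locally modified nodes patches together into a genuine tree analysis of $g|_{E}$, is where I expect the real work of the proof to lie.
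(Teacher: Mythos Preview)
Your inductive argument on the levels $W_{mT}^{n}$ is exactly the paper's proof recast: the paper simply observes that the set $V=\{f\in W_{mT}:f\text{ admits a tree analysis}\}$ is closed under each of the defining operations of $W_{mT}$ and invokes minimality, which amounts to the same case analysis you outline. The paper calls this easy and supplies no details.

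Your assessment of where the difficulty lies is, however, off. The interval-restriction case at convex-combination nodes, which you single out as the ``hard part'', is routine once one sees the right move. You are correct that after pruning the zero children the surviving coefficients $(r_{\beta})_{\beta\in S'}$ sum only to some rational $R\leq 1$, but renormalising is the wrong idea: as you yourself suspect, it would change the value at the node and create the very compatibility problem you fear. Instead one \emph{pads}. Since $W_{mT}$ is symmetric, adjoin to $S'$ two new leaf children carrying $e^{*}_{m}$ and $-e^{*}_{m}$ (for any $m$) with coefficients $\tfrac{1-R}{2}$ each; this restores a genuine rational convex combination with unchanged value, the two new leaves have one-node tree analyses, and nothing above $\alpha$ is disturbed. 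Each deficient node is repaired locally and independently, so there is no global bookkeeping, and the proof really is as short as the paper indicates.
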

The proof is easy.  We simple consider  all $f\in W_{mT}$ admitting a
tree analysis and then we show that this set satisfies (1),(2) and (3)
of the above definition.  The result follows from the minimality of
the set $W_{mT}$.

\begin{definition}
  Let $f\in\wmt$  and $(f_{\alpha})_{\alpha\in\tree}$ be a  tree   analysis  of $f$.
  For  $\alpha\in\tree$  the functional $f_{\alpha}$
 is said to be weighted  with weight $w(f_{\alpha})=m_{j}$ if $f_{\alpha}$ is the result
 of an $(\mc{A}_{n_{j}},m_{j}^{-1})$-operation of its successors.
\end{definition}

It was  proved in  \cite{ATolmemoir}, Lemma 3.15,  that every  $f\in
W_{mT}$ admits a tree analysis $(f_{\alpha})_{\alpha\in\tree}$ such that whenever  $f_{\alpha}$ is rational convex combination of its successors,
i.e., $f_{\alpha}=\sum_{\beta\in S_{\alpha}}r_{\beta}f_{\beta}$,  then
every $f_{\beta}$ is a weighted functional  and $\ran(f_{\beta})\subset\ran(f_{\alpha})$.

\section{Even operation complete subsets of $W_{mT}$}\label{eocomplete}
\begin{definition}
  A subset  $W$ of $W_{mT}$ is said to be  even operation complete  (EO-complete)  if
  \begin{enumerate}
  \item $\pm e^{*}_{n} \in W$ for every $n\in\N$.
    \item $W$  is closed   in the  $(\mc{A}_{n_{2j}},m_{2j}^{-1})$-operation
      for every $j\in\N$.
    \item  $W$ is symmetric 
      and closed under the restriction of its elements to intervals of $\N$.
\item  Every $f\in W$  admits a tree analysis $(f_{\alpha})_{\alpha\in\tree}$ such
  that $f_{\alpha}\in W$  for every $\alpha\in\tree$.
  \end{enumerate}
\end{definition}
In the rest  of this section by $W$ we will denote any EO-complete subset
of $W_{mT}$.
We will denote by $X_{W}$ the completion of $c_{00}(\N)$ under the
norm $\norm{\cdot}_{W}$ induced by $W$.

\begin{definition}
A vector $x\in X_{W}$ is said to be a $C-\ell_{1}^{k}$- average if  there exists
$x_{1}<x_{2}<\dots<x_{k}$  such  that
$x=\frac{1}{k}\sum_{i=1}^{k}x_{i}$, and  $\norm{x_{i}}\leq C\norm{x}$ for
each $i$.

Moreover  if $\norm{x}=1$  then $x$ is called   a normalized $C-\ell_{1}^{k}$-average.
\end{definition}
\begin{lemma}\label{l1averages}
Let $Y$   be a block subspace  of $X_{W}$. Then for every $j\in\N$
there exists a   block sequence  $(y_{i})_{i=1}^{n_{2j}}$ such that
$y_{i}\in Y$   and $n_{2j}^{-1}\sum_{i=1}^{n_{2j}}y_{i}$ is a normalized $2-\ell_{1}^{n_{2j}}$-average
\end{lemma}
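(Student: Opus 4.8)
The standard approach to producing $\ell_1^{n_{2j}}$-averages in mixed Tsirelson spaces is an iterated averaging argument, and I would follow that template here. The plan is to argue by contradiction: suppose that for some fixed $j$ the block subspace $Y$ admits no normalized $2$-$\ell_1^{n_{2j}}$-average. The key observation is that one should in fact work with $\ell_1^{n_{2i}}$-averages for a range of indices $i$ and build them up successively, so I would first fix a large integer $k$ (to be chosen later, polynomially large relative to $n_{2j}$) and consider the stronger failure: assume there is no normalized $2$-$\ell_1^{k}$-average in $Y$. The heart of the matter is the following dichotomy for a normalized block vector $x=\sum_{i=1}^{k}x_i$ written as an average of $k$ successive normalized blocks $x_i\in Y$ divided by $k$: either $x$ is already (close to) a $2$-$\ell_1^k$-average, or some $x_i$ has norm substantially larger than $\frac{1}{k}\cdot(\text{something})$, which lets us ``zoom in'' on that $x_i$ and restart the process on a block of smaller combinatorial complexity.

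Concretely, I would set up the standard recursive scheme. Start with $\ell_1^2$-averages, which are trivial to obtain (take any two successive normalized blocks $y_1<y_2$ in $Y$ and form $\frac12(y_1+y_2)$; after renormalizing this is a $2$-$\ell_1^2$-average provided no worse constant is forced, which one checks directly). Then inductively, given a normalized $C$-$\ell_1^{n}$-average, one takes $n'$ many successive normalized such averages $z_1<\cdots<z_{n'}$ in $Y$ and looks at $z=\frac{1}{n'}\sum z_t$; if $\|z_t\|\le C'\|z\|$ for all $t$ after renormalization then $z$ is a $C'$-$\ell_1^{nn'}$-average and we have increased the length; if not, i.e. some $\|z_t\|$ is large relative to $\|z\|$, then the single vector $z_t$ (which is itself a $C$-$\ell_1^{n}$-average of blocks in $Y$) has small norm as a vector of $X_W$ yet is an average of $n$ normalized blocks, and by unfolding one level of the average structure and applying the pigeonhole principle one finds a sub-average of controlled length which is a $2$-$\ell_1^{m}$-average for some $m$ in the target range. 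The arithmetic is arranged so that after finitely many steps one is forced into the first alternative with length at least $n_{2j}$ and constant $2$; restricting such a long average to its first $n_{2j}$ terms (and renormalizing) gives the desired $2$-$\ell_1^{n_{2j}}$-average. The role of EO-completeness, the growth condition $n_{j+1}=(12n_j)^{s_j}$, and the fact that $W\subset W_{mT}$ enters only through crude norm estimates: any $f\in W$ is built by at most the operations $(\mathcal{A}_{n_{2i}},m_{2i}^{-1})$ and convex combinations, so its action on a long average can be split into a ``small weight'' part (contributing at most $m^{-1}$ times a trivial bound) and a ``few large coordinates'' part, and the admissibility bounds $n_{2i}$ combined with the rapid growth of $n_j$ force the average to be essentially flat.

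The main obstacle, and the step I would spend the most care on, is the bookkeeping in the recursion: one must choose the initial length $k$, the sequence of intermediate lengths, and the sequence of constants $C_t\to 2$ so that (a) the process terminates before the constant degrades past $2$, and (b) the final length exceeds $n_{2j}$. This is precisely where the specific choices $m_{j+1}=m_j^5$ and $2^{s_j}\ge m_{j+1}^3$ are used, and getting the inequalities to close is the delicate point; but it is entirely analogous to the classical computation (e.g.\ in \cite{ATod}, \cite{GM}, \cite{Sch}) for Schlumprecht-type spaces, with the only modification being that all operations are restricted to even indices, which merely replaces $n_j,m_j$ by $n_{2j},m_{2j}$ throughout and does not affect the structure of the argument. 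A secondary, routine point is verifying that the restriction of a long $2$-$\ell_1^{N}$-average to an initial segment of $n_{2j}$ of its summands is again a normalized $2$-$\ell_1^{n_{2j}}$-average after renormalization, which follows from the fact that each summand has norm at most $2/N$ times the whole, hence the initial segment has norm comparable to $n_{2j}/N$, and the constant is preserved.
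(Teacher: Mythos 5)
The paper does not prove this lemma; it cites the James-type argument of \cite{GM} (Lemma 3) and its adaptation in \cite{ATod} (Lemma II.22). Your sketch aims at that same template (assume no normalized $2$-$\ell_1^k$-average exists, iterate the dichotomy ``either the average is flat or one block is disproportionately large'' down $N$ levels to get $\norm{\sum_{i=1}^{k^N}x_i}<(k/2)^N$, and contradict this using the growth of $(n_j)$ and $(m_j)$), and the zooming-in recursion you describe is a legitimate phrasing of that iteration. However, there is one genuine gap: you never state the ingredient that actually produces the contradiction, namely the \emph{lower} $\ell_1$-estimate coming from EO-completeness. Since $W$ is closed under the $(\mc{A}_{n_{2i}},m_{2i}^{-1})$-operations, for normalized successive blocks $x_1<\dots<x_M$ with $M\leq n_{2i}$ one picks $f_t\in W$ with $f_t(x_t)>1-\delta$ and $\ran(f_t)\subset\ran(x_t)$, and then $m_{2i}^{-1}\sum_t f_t\in W$ gives $\norm{\sum_t x_t}\geq (1-\delta)M/m_{2i}$. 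Comparing $k^N/m_{2i_0}$ with $(k/2)^N$ for $k^N\leq n_{2i_0}$ and $2^N\geq m_{2i_0}$ (which the conditions $n_{j+1}=(12n_j)^{s_j}$, $2^{s_j}\geq m_{j+1}^3$ make possible) is the whole contradiction. What you attribute to the norming set instead --- splitting a functional's action into a small-weight part and a few large coordinates, ``forcing the average to be essentially flat'' --- is an \emph{upper} estimate and points in the wrong direction; note that everything you say about the norming set would equally apply in $c_0$, where the lemma is false. Without the lower estimate the iteration produces an upper bound on $\norm{\sum x_i}$ and nothing to contradict it with.

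Two secondary issues. First, your bottom-up composition of averages multiplies constants: if $z=\frac{1}{n'}\sum_t z_t$ with each $z_t$ a $C$-$\ell_1^{n}$-average and $\norm{z_t}\leq C''\norm{z}$, then $z$ is a $CC''$-$\ell_1^{nn'}$-average, not a $C''$-average; keeping the final constant equal to $2$ then requires per-step constants $1+\e_t$ with controlled product, a complication the top-down contradiction argument avoids entirely (the constant $2$ there appears exactly once). Second, the passage from a long $2$-$\ell_1^{N}$-average to a $2$-$\ell_1^{n_{2j}}$-average is misjustified: the definition gives $\norm{x_i}\leq 2\norm{x}$ but no lower bound on the norm of the \emph{initial} segment of $n_{2j}$ summands. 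One must instead choose, by the triangle inequality, a consecutive group $G$ of $n_{2j}$ summands with $\norm{\sum_{i\in G}x_i}\geq (n_{2j}/N)\norm{\sum_{i=1}^{N}x_i}$; for that group $\norm{\frac{1}{n_{2j}}\sum_{i\in G}x_i}\geq\norm{x}$ and the constant $2$ is preserved.
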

The existence of normalized $\ell_{1}^{k}$-averages was  first established by
Th. Schlumprecht  in his space $S$ (\cite{Sch}), with the use  
of  Krivine theorem, since  $\ell_{1}$ is  finitely block representable  in
$S$. Later  W. T. Gowers and B. Maurey, (\cite{GM} Lemma 3),  provided  a  James type argument to construct
$2-\ell_{1}^{k}$-averages in $S$. For a proof  
in the setting of the  space $X_{W}$ we refer the reader to  \cite{ATod} Lemma II.22, \cite{AManStudia} Lemma 4.6

The following lemma  concerning  $\ell_{1}^{n{j}}$-averages is an
essential tool
for providing upper bounds  on the norm of certain vectors.
\begin{lemma}\label{i<j}
 Let $x$ be an $C-\ell_{1}^{n_{j}}$-average  in $X_{W}$  and $f\in W$
 with $w(f)=m_{i}$, $i<j$.  Then
 \[\abs{f(x)}\leq\frac{3C}{2m_{i}}. \]
\end{lemma}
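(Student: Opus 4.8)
The plan is to use a tree analysis of $f$ and split the successors of each weighted node into those that "see" few of the averaging blocks $x_1,\dots,x_{n_j}$ of $x=\tfrac1{n_j}\sum x_\ell$ and those that see many. Concretely, write $x=\frac{1}{n_j}\sum_{\ell=1}^{n_j}x_\ell$ with $x_1<\dots<x_{n_j}$ and $\norm{x_\ell}\le C\norm{x}$. Fix a tree analysis $(f_\alpha)_{\alpha\in\tree}$ of $f$ with $f_0=f$, $w(f_0)=m_i$; by the last remark of Section~\ref{eocomplete} we may assume that whenever a node is a rational convex combination all its successors are weighted and have range contained in that of the node, so every node carries a well-defined weight $w(f_\alpha)=m_{j_\alpha}$ with $j_\alpha\ge i$ down from the root (weights are non-decreasing along branches is \emph{not} automatic, so I will not assume that; instead I argue by the size of the supports). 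For a node $\alpha$ let $A_\alpha=\{\ell: \ran(x_\ell)\cap\ran(f_\alpha)\ne\emptyset\}$. The key dichotomy: evaluate $f$ on $x$ by estimating, for each $\ell$, the contribution $f(x_\ell)$, distinguishing the $\ell$ that are "split" by the action of $f$ from those that are not.

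The cleaner route, and the one I would carry out, is the standard estimate by induction on the tree. For a subtree rooted at $\alpha$ with $w(f_\alpha)=m_{j_\alpha}$, I claim
\[
\abs{f_\alpha(x)}\le \frac{1}{n_j}\Big(\#A_\alpha\cdot C\norm{x}\cdot \mathbf{1}_{[j_\alpha\ge j]} \;+\; \text{(a small term when }j_\alpha<j)\Big),
\]
more precisely: if every weighted node below $\alpha$ has weight $\ge m_j$, then $\abs{f_\alpha(x)}\le \frac{2C\norm{x}}{n_j}\#A_\alpha$ (because such an $f_\alpha$ behaves like a functional of a mixed Tsirelson norm restricted to weights $\ge m_j$, against which an $\ell_1^{n_j}$-average of size-$\le C\norm x$ blocks has small norm — this is where the condition $2^{s_j}\ge m_{j+1}^3$ and the growth $n_{j}=(12n_{j-1})^{s_{j-1}}$ gets used, as in the standard Gowers--Maurey/ATolias computation). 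On the other hand, for the top node $f=f_0$ with $w(f)=m_i<m_j$: at most one of the immediate successors' ranges can be "straddled" by each $x_\ell$ in the bad way; split the index set $\{1,\dots,n_j\}$ into those $\ell$ lying entirely inside the range of a single successor $f_\beta$ of $f_0$ and the at most $n_i\le n_{j}$ many $\ell$ straddling two successors. For the straddling ones use $\abs{f_0(x_\ell)}\le \norm{x_\ell}\le C\norm{x}$, contributing at most $\frac{1}{m_i}\cdot\frac{1}{n_j}\cdot (\#\text{successors}) \cdot C\norm x$; for the interior ones, group them by successor and apply the inductive bound. Summing and using $m_i^{-1}(n_i/n_j)C\norm x + m_i^{-1}\cdot(\text{sum over successors of }2C\norm x/n_j\cdot\#A_\beta)$ and $\sum_\beta \#A_\beta\le n_j + n_i$ collapses to $\le \frac{3C}{2m_i}\norm{x}$, with $\norm{x}=1$ in the statement.

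The main obstacle — and the only place real care is needed — is the base estimate: bounding $\abs{g(x)}$ for $g\in W$ all of whose weighted nodes have weight at least $m_j$, against a $2$-$\ell_1^{n_j}$-average $x$. This is the step that forces the arithmetic conditions on $(m_j),(n_j)$ in Section~\ref{smt}. I would prove it by a secondary induction on the height of the tree of $g$: a weighted node $g_\alpha=m_{k}^{-1}\sum_{\beta\in S_\alpha}g_\beta$ with $k\ge j$ and $\#S_\alpha\le n_k$; by the inductive hypothesis each $\abs{g_\beta(x)}$ is controlled by $(\text{stuff})/n_j$, and the branching factor $n_k\le$ (something like) $n_j^{1/3}$-type comparison against the weight $m_k\ge m_j$ lets the geometric series close. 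This is precisely Lemma II.23 (or the analogue) of \cite{ATod}, and I would cite that computation rather than reproduce it, keeping only the bookkeeping of the $\#A_\alpha$ terms explicit since that is what feeds back into the main estimate above. Everything else is routine: no conditional/special-functional structure is used here, only the unconditional $(\mathcal A_{n_j},m_j^{-1})$ part, which is why the lemma holds verbatim in $X_W$ for any EO-complete $W$.
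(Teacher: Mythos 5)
Your proposal reaches for far heavier machinery than this lemma requires, and in doing so it both mislocates the difficulty and fails to deliver the stated constant. The paper itself offers no proof beyond citing \cite{GM} Lemma 4 (equivalently \cite{ATod} Lemma II.23), and that cited argument is a one-level interval count, not a tree-analysis induction: since $w(f)=m_i$, write $f=m_i^{-1}\sum_{t=1}^{d}f_t$ with $d\le n_i$, $f_1<\dots<f_d$ in $W$, and write $x=n_j^{-1}\sum_{\ell=1}^{n_j}x_\ell$. Each $f_t$ has dual norm at most $1$, so $\abs{f_t(x_\ell)}\le\norm{x_\ell}\le C\norm{x}$, and therefore
\[
\abs{f(x)}\;\le\;\frac{C\norm{x}}{m_i n_j}\,\#\{(t,\ell):\ran(f_t)\cap\ran(x_\ell)\ne\emptyset\}\;\le\;\frac{C\norm{x}}{m_i n_j}\,(n_j+d-1),
\]
the incidence count being at most $n_j+d-1$ because both families are successive (the incidence set is a monotone staircase). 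As $d\le n_i\le n_{j-1}$ and $n_j\ge 12\,n_{j-1}$, this is at most $\frac{13}{12}\cdot\frac{C}{m_i}\norm{x}<\frac{3C}{2m_i}\norm{x}$. No tree analysis, no auxiliary space, and no use of the condition $2^{s_j}\ge m_{j+1}^3$ enters: the only arithmetic input is $n_i/n_j\le 1/2$.

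Two things go wrong concretely in your write-up. First, the ``main obstacle'' you identify --- bounding $g(x)$ when all weighted nodes of $g$ have weight at least $m_j$ --- is not an obstacle: the estimate $\abs{g(x)}\le\frac{C\norm{x}}{n_j}\#\{\ell:\ran(x_\ell)\cap\ran(g)\ne\emptyset\}$ is just the triangle inequality plus $\norm{g}^*\le1$ and holds for \emph{every} $g\in W$ with no hypothesis on weights; deferring it to \cite{ATod} Lemma II.23 is moreover circular, since that lemma \emph{is} the statement being proved. Second, your constants do not close: with the factor $2$ in your claimed base estimate, the final bound $m_i^{-1}\frac{n_i}{n_j}C\norm{x}+m_i^{-1}\sum_\beta\frac{2C\norm{x}}{n_j}\#A_\beta$ together with $\sum_\beta\#A_\beta\le n_j+n_i$ gives roughly $\frac{2C}{m_i}\norm{x}$, which exceeds $\frac{3C}{2m_i}\norm{x}$. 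The factor must be $1$, at which point the entire tree apparatus collapses to the one-level count above.
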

We refer   to \cite{GM} Lemma 4,  or \cite{ATod} Lemma II.23, for a proof.

\begin{definition}\label{defRIS}
A block sequence  $\bxk$   in $X_{W}$ is said to be a $(C,\e)$-rapidly
increasing sequence (RIS), if
there exists a strictly
 increasing sequence  $(j_{k})_{k}$  of positive integers such that
\begin{enumerate}
\item $\norm{x_{k}}\leq C$ for all  $k$.
 \item    $\frac{1}{m_{j_{k+1}}} \cdot \#\supp(x_{k})\leq \e$ for every $k$.
 \item For every $k=1,2,\dots $  and every $f\in W$ with $w(f)=m_{i}, i< j_{k}$ we have
    that   $|f (x_{k} )|\leq \sfrac{C}{m_{i}}$.
 \end{enumerate}
\end{definition}

\begin{proposition}
Let $Y$
be a block subspace  of $X_{W}$. Then for  $\e>0$
there exists a normalized $(3,\e)$-RIS
sequence $(x_{k})_{k}$  with $x_{k}\in Y$  for every $k$.
\end{proposition}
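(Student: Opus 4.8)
The plan is to build $(x_k)_k$ together with a strictly increasing index sequence $(j_k)_k$ by induction, arranging that each $x_k\in Y$ is a normalized $2$-$\ell_1^{n_{2j_k}}$-average, that $x_1<x_2<\cdots$, and that $\tfrac{1}{m_{j_{k+1}}}\#\supp(x_k)\le\e$ for all $k$. The two ingredients are \cref{l1averages}, which supplies normalized $2$-$\ell_1^{n_{2j}}$-averages of every prescribed length inside any block subspace, and \cref{i<j}, which bounds the action of a functional of weight $m_i$ on a $C$-$\ell_1^{n_j}$-average whenever $i<j$. The RIS constant will turn out to be $C=3$.

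Fix $\e>0$. For $k=1$ set $j_1=1$ and apply \cref{l1averages} to $Y$ to obtain $x_1\in Y$, a normalized $2$-$\ell_1^{n_{2}}$-average; recall $\supp x_1$ is finite. Now suppose $x_1<\cdots<x_{k-1}$ in $Y$ and $j_1<\cdots<j_{k-1}$ have been defined as above. Since $m_j\to\infty$, choose $j_k>j_{k-1}$ so large that $m_{j_k}\ge\e^{-1}\#\supp(x_{k-1})$, which secures the estimate $\tfrac{1}{m_{j_k}}\#\supp(x_{k-1})\le\e$. The closed span $Y_k$ of those block vectors of $Y$ whose minimum support lies past $\maxsupp x_{k-1}$ is again a block subspace, so applying \cref{l1averages} to $Y_k$ with length $n_{2j_k}$ produces a normalized $2$-$\ell_1^{n_{2j_k}}$-average $x_k\in Y_k\subseteq Y$ with $\maxsupp x_{k-1}<\minsupp x_k$. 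This completes the inductive construction.

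It remains to verify that $(x_k)_k$, with witnesses $(j_k)_k$, is a normalized $(3,\e)$-RIS in the sense of \cref{defRIS}. Condition (1) holds since $\norm{x_k}=1\le 3$; condition (2) is precisely $\tfrac{1}{m_{j_{k+1}}}\#\supp(x_k)\le\e$, which was built in when $j_{k+1}$ was chosen. For condition (3), take $f\in W$ with $w(f)=m_i$ and $i<j_k$. Since $x_k$ is a $2$-$\ell_1^{n_{2j_k}}$-average and $i<j_k\le 2j_k$, \cref{i<j} (with $C=2$ and $2j_k$ in the role of $j$) yields $\abs{f(x_k)}\le\tfrac{3\cdot 2}{2m_i}=\tfrac{3}{m_i}$, which is the required bound with $C=3$. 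Hence $(x_k)_k\subseteq Y$ is the desired normalized $(3,\e)$-RIS.

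There is no deep obstacle here; the one point that needs care is the order of the inductive choices. The support of $x_k$ becomes available only after $x_k$ has been produced, so the index $j_{k+1}$ that governs condition (2) at stage $k$ must be picked afterwards — this is why the construction alternates between enlarging $j_k$ and extracting the average $x_k$. The value $C=3$ is forced by multiplying the constant $2$ of the $\ell_1$-averages from \cref{l1averages} by the factor $\tfrac32$ in \cref{i<j}, and is all that is needed for the applications of RIS sequences later on.
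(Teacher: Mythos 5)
Your proof is correct and follows essentially the same route as the paper: extract normalized $2$-$\ell_1^{n_{2j}}$-averages from $Y$ via Lemma~\ref{l1averages}, pass to a subsequence (equivalently, choose the indices inductively) so that $\#\supp(x_k)\le \e\, m_{j_{k+1}}$, and invoke Lemma~\ref{i<j} to get the constant $3=2\cdot\tfrac{3}{2}$ in condition (3). The only cosmetic difference is that you take $(j_k)$ itself as the witness sequence while the paper implicitly uses $(2j_k)$; both are valid since $i<j_k$ implies $i<2j_k$.
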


Indeed, Lemma~\ref{l1averages}   yields that  we can get a block
sequence  $(x_{j})_{j}$ such that  $x_{j}$ is a normalized
2-$\ell_{1}^{n_{2j}}$-average and $x_{j}\in Y$ for every $k$.  Choose  a
subsequence  $(x_{j_{k}})_{k}$   such that    $\#\supp(x_{j_{k}})\leq\e m_{2j_{k+1}}$.
Lemma~\ref{i<j}    yields that    $(x_{j_{k}})_{k}$ is  a  $(3,\e)$-RIS.
\subsection{The auxiliary space}
In order to get  upper bounds  on  the action of  weighted functionals on certain vectors we shall use an auxiliary space to reduce the high complexity of the calculations (see subsection~\ref{basicin}).
The auxiliary  space
$X_{aux}$   is the mixed  Tsirelson space
$T[(\mc{A}_{4n_{j}},m_{j}^{-1})_{j\in\N}]$.

The result we  shall  use from the auxiliary space  are 
the  upper 
estimations
of the functionals of its  norming set  $W_{aux}$ on the averages of its  basis,
and in particular  the following lemma.
\begin{lemma}\label{basisaverages}
Let $x=\frac{m_{j}}{n_{j}}\sum_{k=1}^{n_{j}}e_{k}\in X_{aux}$. Then  for 
every $f\in W_{aux}$ with  $w(f)=m_{i}$,
\[
  \abs{f(x)}\leq\begin{cases}\frac{2}{m_{i}}\,\,\text{if $i<j$}\\
        \frac{m_{j}}{m_{i}}\,\,\text{if $i\geq j $}.
  \end{cases}
\]
Moreover  if $f$  has a tree analysis $(f_{\alpha})_{\alpha\in\tree}$ such
that for every weighted  $f_{\alpha}$, $w(f_{\alpha})\ne m_{j}$  then
$\abs{f(x)}\leq \frac{2}{m^{2}_{j}}$.
\end{lemma}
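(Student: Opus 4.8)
The plan is to establish the three bounds separately, reducing the two nontrivial ones to a single estimate on the $\ell_1$-mass of functionals that are forbidden from using the weight $m_j$.

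\emph{The case $i\ge j$.} Every $f\in W_{aux}$ with $w(f)=m_i$ has the form $f=m_i^{-1}\sum_l f_l$ with the $f_l$ of successive support and $\norm{f_l}_\infty\le 1$; since the supports are pairwise disjoint this gives $\norm{f}_\infty\le 1/m_i$. As $\norm{x}_1=n_j\cdot (m_j/n_j)=m_j$, the $\ell_\infty$--$\ell_1$ duality yields $\abs{f(x)}\le \norm{f}_\infty\norm{x}_1\le m_j/m_i$, which is exactly the asserted bound for $i\ge j$ (and is harmless there, being at most $1$). This uses nothing about the tree of $f$.

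\emph{The ``moreover'' assertion is the core.} First reduce to $\supp f\subseteq\{1,\dots,n_j\}$, replacing $f$ by $f\cdot 1_{[1,n_j]}$, which stays in $W_{aux}$ with the same weight and retains the no-$m_j$ property. Then $\abs{f(x)}\le \frac{m_j}{n_j}\sum_k\abs{f(e_k)}=\frac{m_j}{n_j}\norm{f}_1$, so it suffices to prove the sublemma: if $g\in W_{aux}$ admits a tree analysis with no weighted node of weight $m_j$ and $\supp g\subseteq\{1,\dots,n_j\}$, then $\norm{g}_1\le 2n_j/m_j^3$. I would prove this by induction on the height of the tree analysis, running the clean recursion $\norm{g}_1=w(g)^{-1}\sum_\beta\norm{g_\beta}_1$ over the immediate successors (a rational convex-combination node only helps, since there $\norm{g}_1\le \max_\beta\norm{g_\beta}_1$). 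In effect this is a bound, at the point $n_j$, on the fundamental function of the mixed Tsirelson space whose weights are $(m_t)_{t\ne j}$: to accumulate a support of size $n_j$ using only indices $\ne j$ one must use either indices $>j$, whose coefficients are at most $1/m_{j+1}=1/m_j^5$, or long iterations of indices $<j$, and reaching $n_j=(12n_{j-1})^{s_{j-1}}$ coordinates through the latter costs a depth factor that, by the lacunarity $2^{s_{j-1}}\ge m_j^3$, leaves at most $2m_j^{-3}$ of the full mass. This is precisely where the growth rates and the factor $4$ in $\mc{A}_{4n_j}$ are consumed. Feeding the sublemma back gives $\abs{f(x)}\le \frac{m_j}{n_j}\cdot\frac{2n_j}{m_j^3}=2/m_j^2$.

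\emph{Deriving $i<j$ from the ``moreover'' assertion.} Again assume $\supp f\subseteq\{1,\dots,n_j\}$, fix a tree analysis, and let $\alpha_1,\dots,\alpha_p$ be the maximal nodes of weight $m_j$. Writing $f=f_0+\sum_{q=1}^p\lambda_q f_{\alpha_q}$, where $f_0=f\cdot 1_{K_0}$ collects the coordinates whose branch avoids every weight-$m_j$ node and $\lambda_q\in(0,1]$ is the product of the operation coefficients along the branch from the root to $\alpha_q$. The pruned $f_0$ lies in $W_{aux}$ and has no weight-$m_j$ node, so the ``moreover'' assertion bounds $\abs{f_0(x)}\le 2/m_j^2$. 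Each $f_{\alpha_q}$ has weight $m_j$, hence $\norm{f_{\alpha_q}}_\infty\le 1/m_j$ and $\abs{f_{\alpha_q}(x)}\le \frac{1}{m_j}\cdot\frac{m_j}{n_j}\cdot c_q=c_q/n_j$ with $c_q=\#(\supp f_{\alpha_q}\cap\{1,\dots,n_j\})$; as the $\alpha_q$ are incomparable their supports are disjoint and $\sum_q c_q\le n_j$. Since the root has weight $m_i\ne m_j$, every $\alpha_q$ is a proper descendant and the first factor of $\lambda_q$ is $1/m_i$, so $\lambda_q\le 1/m_i$. Hence $\sum_q\lambda_q\abs{f_{\alpha_q}(x)}\le \frac{1}{m_i}\sum_q c_q/n_j\le 1/m_i$, and $\abs{f(x)}\le 2/m_j^2+1/m_i\le 2/m_i$, the last inequality using $2m_i\le m_{j-1}^{10}=m_j^2$.

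\emph{The main obstacle.} The $\ell_\infty$--$\ell_1$ step and the tree bookkeeping in the split are routine. The genuinely delicate point is the sublemma of the second paragraph: extracting the clean constant $2/m_j^3$ for the $\ell_1$-mass of a functional that is barred from the weight $m_j$. This is a fundamental-function computation for a level-deleted mixed Tsirelson space, and I expect the induction to require carrying a support-sensitive estimate — distinguishing supports below and above $n_j/m_j^{3}$ — rather than the bare bound, so as to absorb the contributions of single leaves and of high-weight ($t>j$) nodes; it is exactly here that the growth conditions $m_{j+1}=m_j^5$ and $2^{s_j}\ge m_{j+1}^3$ are indispensable.
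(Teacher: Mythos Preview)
The paper does not give an in-text proof of this lemma; it simply cites \cite{ATod}, Lemma~II.9 and \cite{AManStudia}, Lemma~4.2. Your outline is precisely the standard argument that appears in those references: the $\ell_\infty$--$\ell_1$ duality dispatches $i\ge j$, the ``moreover'' clause is the genuine content, and the case $i<j$ is deduced from it by pruning the subtrees rooted at the weight-$m_j$ nodes closest to the root (your word ``maximal'' is ambiguous here --- you need the \emph{minimal} such nodes in the tree order so that the pruned $f_0$ really is free of weight $m_j$ and the $f_{\alpha_q}$ are incomparable; make this explicit). Your identification of the sublemma on the $\ell_1$-mass of a weight-$m_j$-free functional as the crux is accurate, and your sketch of its proof via a support-sensitive induction exploiting $m_{j+1}=m_j^5$ and $2^{s_{j-1}}\ge m_j^3$ is exactly how the cited sources proceed; there the induction is usually phrased as a direct bound on $|f(e_1+\cdots+e_n)|$ rather than on $\norm{f}_1$, but for this particular $x$ the two formulations are equivalent.
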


For a proof of this result see \cite{ATod},Lemma II.9,  (or
\cite{AManStudia},  Lemma 4.2).
\begin{remark}\label{basisaverages2}
Let us note  that the above estimations hold also   for
$n_{j}$-averages of the basis  of the space $X_{W}$.
\end{remark}
\subsection{The basic inequality}\label{basicin}
The basic inequality is the fundamental  tool which permits us to  show
that the action of a
weighted functional $f\in W$  on a  $n_{j}$-average  of a RIS  admits
similar upper bound as the action of a functional  
 $g$ in $W_{aux}$ with the same weight  with $f$,  acting  on a
 $n_{j}$-average of the basis  
 of $X_{aux}$.  In particular the following holds.
 \begin{proposition}
   Let  $(x_{k})_{k}$  be a $(C,\e)$-RIS in $X_{W}$ and $(\lambda_{k})_{k}\in
   c_{00}(\N)$ be a sequence  of scalars. Then for every  $f\in W$ of
   type $I$ we can find  $g_{1}$ such that $g_{1}=h_{1}$  or
   $g_{1}=e^{*}_{t}+h_{1}$
   with $t\notin \supp( h_{1})$ where  $h_{1}\in W_{aux}$  with
   $w(h_{1})=w(f)$  and $g_{2}\in c_{00}(\N)$ with $\norm{g_{2}}_{\infty}\leq\e$
   with $g_{1},g_{2}$ having nonnegative coordinates  and such that
   \begin{equation}
     \label{eq:8}
    \abs{f(\sum_{k}\lambda_{k}x_{k})}\leq C(g_{1}+g_{2})(\sum_{k}\abs{\lambda_{k}}e_{k}). 
  \end{equation}
  If we additionally assume that there exists $j_{0}\in\N$ such that for
  every $h\in X_{W}$ with $w(h)=m_{j_{0}}$ and every interval $E$ of
  $\N$ we have that
  \begin{equation}
    \label{eq:9}
    \abs{h(\sum_{k\in E}\lambda_{k}x_{k})}\leq C(\max_{k\in E}\abs{\lambda_{k}}+\e\sum_{k\in E}\abs{\lambda_{k}}),
  \end{equation}
  then if $w(f)\ne m_{j_{0}}$, we may select  $h_{1}$ to have a tree
  analysis $(h_{t})_{t\in\tree[T]}$ with $w(h_{t})\ne m_{j_{0}}$ for all
  $t\in\tree[T]$ with $h_{t}$ a  weighted functional.
 \end{proposition}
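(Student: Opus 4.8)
The plan is to prove \eqref{eq:8} by induction on a tree analysis of $f$. Some harmless reductions come first: since $W$ is symmetric, after replacing each $x_k$ by $\operatorname{sgn}(\lambda_k)x_k$ we may assume all $\lambda_k\ge0$; since $f$ is supported in $\ran f$ and $W$ is closed under restrictions to intervals, we may discard the $x_k$ disjoint from $\ran f$ and replace the at most two $x_k$ straddling the boundary of $\ran f$ by their restrictions, so that $\ran x_k\subseteq\ran f$ for all $k$ (and the sequence remains a $(C,\e)$-RIS with the same indices $j_k$); and, using EO-completeness, fix a tree analysis $(f_\alpha)_{\alpha\in\tree}$ of $f$ with $f_\alpha\in W$ for all $\alpha$, refined as is standard so that the successors of any rational-convex-combination node are weighted functionals. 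I would then prove, by induction from the leaves of $\tree$ to the root, that for every $\alpha$ there are $g_1^{\alpha},g_2^{\alpha}$ with nonnegative coordinates, supported in $\{k:\ran x_k\cap\ran f_\alpha\ne\emptyset\}$, such that $g_1^{\alpha}=h_1^{\alpha}$ or $g_1^{\alpha}=e_t^{*}+h_1^{\alpha}$ with $t\notin\supp h_1^{\alpha}$ and $h_1^{\alpha}\in W_{aux}$, such that $h_1^{\alpha}$ is the outcome of the corresponding $(\mc{A}_{4n_j},m_j^{-1})$-operation when $w(f_\alpha)=m_j$, such that $\norm{g_2^{\alpha}}_\infty\le\e$, and such that $\bigl|f_\alpha(\sum_k\lambda_k x_k)\bigr|\le C(g_1^{\alpha}+g_2^{\alpha})(\sum_k\lambda_k e_k)$; taking $\alpha$ to be the root then yields the proposition. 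A leaf $f_\alpha=\pm e_n^{*}$ meets a single $x_{k_0}$, and $\bigl|f_\alpha(\sum_k\lambda_k x_k)\bigr|=\lambda_{k_0}|x_{k_0}(n)|\le C\lambda_{k_0}$, so $g_1^{\alpha}=e_{k_0}^{*}$, $g_2^{\alpha}=0$ work. For a rational-convex-combination node $f_\alpha=\sum_\beta r_\beta f_\beta$, I would apply the inductive hypothesis to each (weighted) $f_\beta$ and set $h_1^{\alpha}=\sum_\beta r_\beta h_1^{\beta}\in W_{aux}$, merging the exceptional coordinates of the $g_1^{\beta}$ into one via $\sum_\beta r_\beta\lambda_{t_\beta}\le\max_\beta\lambda_{t_\beta}$, and $g_2^{\alpha}=\sum_\beta r_\beta g_2^{\beta}$, which has $\ell_\infty$-norm at most $\e$ by convexity.

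The heart of the argument is a weighted node $f_\alpha=m_j^{-1}\sum_{i=1}^{d}f_i$ with $d\le n_j$. Split the relevant indices into those \emph{interior} to a (unique) $f_i$ and the at most $d-1$ \emph{straddling} ones. To each $f_i$, apply the inductive hypothesis to the block subsequence of $x_k$ interior to $f_i$, obtaining $g_1^{i},g_2^{i}$; the $g_2^{i}$ are disjointly supported, so $\bigl\|\sum_i g_2^{i}\bigr\|_\infty\le\e$. A straddling $k$ with $j_k>j$ obeys $|f_\alpha(x_k)|\le C/m_j$ by property (3) of Definition~\ref{defRIS}, applied with the weight $m_j$ of $f_\alpha$ (or rather of its restriction to $\ran x_k$), and contributes the term $m_j^{-1}e_k^{*}$; the remaining straddling $k$'s have $j_k\le j$, hence lie among the finitely many early RIS elements, are estimated crudely, and their contributions together with the exceptional coordinates carried up from the $g_1^{i}$ must all be absorbed into a single exceptional coordinate $e_t^{*}$. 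Setting $h_1^{\alpha}=m_j^{-1}\bigl(\sum_i h_1^{i}+\sum_{\text{straddling }k,\ j_k>j}e_k^{*}\bigr)$, the number of summands is at most $d+(d-1)$ together with the at most $d$ exceptional coordinates folded in from the $g_1^{i}$, which stays below $4n_j$; after reindexing they have successive supports, so $h_1^{\alpha}\in W_{aux}$ with $w(h_1^{\alpha})=m_j$. The step I expect to be the main obstacle is precisely this bookkeeping: keeping $h_1^{\alpha}$ within the $\mc{A}_{4n_j}$ budget in the presence of straddling RIS elements, and in particular controlling the early RIS elements whose index $j_k$ is too small for the weight met so that only one exceptional $e_t^{*}$ and one $\ell_\infty$-small error functional survive to the root; this rests on carefully preserving the range conditions down the induction.

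For the last assertion, whenever a node $f_\alpha$ of the tree analysis is weighted with $w(f_\alpha)=m_{j_0}$ I would not recurse through it: since $\ran f_\alpha$ is an interval, \eqref{eq:9} gives $|f_\alpha(\sum_k\lambda_k x_k)|\le C\bigl(\max_{k\in\ran f_\alpha}\lambda_k+\e\sum_{k\in\ran f_\alpha}\lambda_k\bigr)$, which is covered by $g_1^{\alpha}=e_{k_*}^{*}$ for an index $k_*$ attaining the maximum and $g_2^{\alpha}=\e\sum_{k\in\ran f_\alpha}e_k$, so that $\norm{g_2^{\alpha}}_\infty=\e$ and no auxiliary functional of weight $m_{j_0}$ is produced. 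Since the recursion manufactures weighted auxiliary functionals only of weights that occur at weighted nodes of $f$, and $w(f)\ne m_{j_0}$, the resulting $h_1$ has a tree analysis in which no weighted functional has weight $m_{j_0}$; it remains to check, as in the unrestricted case, that the exceptional coordinates and errors generated at these $m_{j_0}$-nodes merge into a single $e_t^{*}$ and an $\ell_\infty$-small $g_2$, which holds because such nodes are not nested and each is absorbed into its parent's operation, or merged via convexity when the parent is a rational convex combination.
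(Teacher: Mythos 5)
The paper does not actually prove this proposition; it is quoted from the literature, with the reader referred to \cite{ATod}, Proposition II.14, and \cite{AManStudia}, Proposition 4.3. Your skeleton --- induction along a tree analysis, with separate treatment of leaves, rational convex combinations, and weighted nodes, and a short-circuit at weight-$m_{j_0}$ nodes via \eqref{eq:9} for the ``moreover'' part --- is exactly the route taken there, so the overall architecture is right.

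There is, however, a genuine gap at the weighted node, and it sits precisely where you flag the ``main obstacle.'' You propose that the straddling indices $k$ with $j_k\le j$, \emph{together with} the exceptional coordinates $e_{t_i}^*$ carried up from the children, ``must all be absorbed into a single exceptional coordinate $e_t^*$.'' This cannot work as stated: each such term is bounded only by $C\lambda_{k}$ (coefficient $1$, no discount), and a sum of several terms of the form $C\lambda_{k}$ is not dominated by $C\lambda_t$ for a single $t$. The actual mechanism is different: the children's exceptional singletons $e_{t_i}^*$ are placed \emph{inside} the $(\mc{A}_{4n_j},m_j^{-1})$-operation that builds $h_1^{\alpha}$, so they acquire the $m_j^{-1}$ discount --- this is the whole reason the auxiliary space uses $\mc{A}_{4n_j}$ rather than $\mc{A}_{n_j}$ --- and one must then justify why the discount is legitimate for those indices. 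That justification splits according to where $w(f_\alpha)=m_j$ sits relative to $j_{t_i}$ and $j_{t_i+1}$: if $j<j_{t_i}$ property (3) of Definition~\ref{defRIS} applies; if $j\ge j_{t_i+1}$ one uses property (2), $m_{j_{k+1}}^{-1}\#\supp(x_k)\le\e$, to bound $\abs{f_\alpha(x_{t_i})}\le C\e$ and route the term into $g_2$; and since the intervals $[j_k,j_{k+1})$ are pairwise disjoint, \emph{at most one} index $k$ can fall in the remaining window $j_k\le j<j_{k+1}$ --- that unique index is the one exceptional coordinate that survives with coefficient $1$. Your plan never invokes condition (2) of Definition~\ref{defRIS}, and correspondingly never explains where the $\e$-small functional $g_2$ actually comes from at a weighted node (disjointness of the $g_2^i$ alone does not generate it). Until the absorption step is rewritten along these lines, the induction does not close. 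A smaller instance of the same issue appears in the ``moreover'' part: incomparable weight-$m_{j_0}$ nodes can share a boundary index $k$, so the functionals $\e\sum_{k}e_k$ they produce may overlap and their sum can exceed $\e$ in $\ell_\infty$-norm unless the index sets $D_\alpha$ are defined by containment of $\ran x_k\cap\ran f$ rather than by nonempty intersection.
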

We refer  the reader  to  \cite{ATod} Proposition
II.14  or  \cite{AManStudia} Proposition 4.3  for a proof.
 As a first application of the basic inequality 
 we get the following result concerning the action of weighted functionals on RIS.
 \begin{corollary}\label{eris}
   Let $(x_{k})_{k=1}^{n_{j}}$ be  a $(C,m_{j}^{-2})$-RIS and
$x=n_{j}^{-1}\sum_{k=1}^{n_{j}}x_{k}$.  Then 
for
   $f\in W$   with $w(f)=m_{i}$  we have that
   \begin{equation}
     \label{eq:7}
    \abs{f(x)}\leq
    \begin{cases}
      \frac{3C}{m_{i}m_{j}}\,\, &\text{if}\,\, i<j\\
            \frac{C}{m_{i}}+\frac{2C}{n_{j}}\,\, &\text{if}\,\, i\geq j\\
    \end{cases}.
  \end{equation}
  In particular for every $j\in\N$, $\displaystyle m^{-1}_{2j}\leq
  \norm{n_{2j}^{-1}\sum_{k=1}^{n_{2j}}x_{k}}\leq 2Cm_{2j}^{-1}$. 
\end{corollary}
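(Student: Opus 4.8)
The plan is to read the estimate off the basic inequality of Subsection~\ref{basicin}, transporting the computation to the auxiliary space $X_{aux}$. Fix $f\in W$ with $w(f)=m_i$. Since $(x_k)_{k=1}^{n_j}$ is a $(C,m_j^{-2})$-RIS, applying the basic inequality with the scalars $\lambda_k=n_j^{-1}$ for $1\le k\le n_j$ produces $g_1$, equal to $h_1$ or to $e_t^{*}+h_1$ with $t\notin\supp h_1$, where $h_1\in W_{aux}$ with $w(h_1)=m_i$, together with $g_2\in c_{00}(\N)$ satisfying $\norm{g_2}_\infty\le m_j^{-2}$, all with nonnegative coordinates, so that
\[
\abs{f(x)}=\Big|f\Big(\frac1{n_j}\sum_{k=1}^{n_j}x_k\Big)\Big|\le C(g_1+g_2)\Big(\frac1{n_j}\sum_{k=1}^{n_j}e_k\Big).
\]
Now $\frac1{n_j}\sum_{k=1}^{n_j}e_k=\frac1{m_j}\cdot\frac{m_j}{n_j}\sum_{k=1}^{n_j}e_k$, and $\frac{m_j}{n_j}\sum_{k=1}^{n_j}e_k$ is precisely the $n_j$-average of the basis of $X_{aux}$ appearing in Lemma~\ref{basisaverages}. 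Feeding $h_1$ (of weight $m_i$) into that lemma and dividing by $m_j$ gives $h_1\big(\tfrac1{n_j}\sum_{k=1}^{n_j}e_k\big)\le \tfrac{2}{m_im_j}$ when $i<j$ and $\le\tfrac1{m_i}$ when $i\ge j$, while the extra atom (when present) contributes $e_t^{*}\big(\tfrac1{n_j}\sum_{k=1}^{n_j}e_k\big)\le n_j^{-1}$ and $g_2\big(\tfrac1{n_j}\sum_{k=1}^{n_j}e_k\big)\le\norm{g_2}_\infty\le m_j^{-2}$.

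In the case $i<j$ these combine cleanly: since $m_i\le m_j$ we have $m_j^{-2}\le (m_im_j)^{-1}$, and condition~(2) on $(n_j)_j$ forces $n_j\ge m_j^{2}$, so $n_j^{-1}\le m_j^{-2}$; hence the three contributions add up to at most $3C(m_im_j)^{-1}$, which is the asserted bound. The case $i\ge j$ is the one that needs care, because the naive sum of the three terms above leaves a contribution of order $Cm_j^{-2}$, which is too large relative to $2Cn_j^{-1}$. Here I would instead argue directly: write $f=m_i^{-1}\sum_{l=1}^{d}f_l$ with $d\le n_i$ and $(f_l)$ admissible, observe that for each $k$ the restriction $f|_{\ran(x_k)}$ is either $0$ or a weighted functional in $W$ of weight $m_i$, so that by the third condition in Definition~\ref{defRIS} one has $\abs{f(x_k)}=\abs{(f|_{\ran(x_k)})(x_k)}\le C/m_i$ whenever $i<j_k$, while $\abs{f(x_k)}\le\norm{x_k}\le C$ always; the indices $k$ with $j_k\le i$ form an initial segment, and controlling their number together with the at most two ``boundary'' pieces of $f$ meeting each $x_k$ yields $\abs{f(x)}\le C/m_i+2C/n_j$. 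I expect this index/boundary bookkeeping in the regime $i\ge j$ to be the real point of the proof; everything else is the basic inequality and Lemma~\ref{basisaverages}.

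Finally, the ``in particular'' clause. For the upper bound on $x=n_{2j}^{-1}\sum_{k=1}^{n_{2j}}x_k$ I take the supremum over $f\in W$ of weight $m_i$: for $i<2j$ the first case gives $\abs{f(x)}\le \frac{3C}{m_im_{2j}}\le\frac{3C}{2m_{2j}}$ (as $m_i\ge m_0=2$), and for $i\ge 2j$ the second case gives $\abs{f(x)}\le\frac{C}{m_{2j}}+\frac{2C}{n_{2j}}\le\frac{2C}{m_{2j}}$, using $n_{2j}\ge m_{2j}^{2}\ge 2m_{2j}$; in both cases $\abs{f(x)}\le 2Cm_{2j}^{-1}$. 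For the lower bound, assuming as we may that the $x_k$ are normalized, choose for each $k$ a functional $\tilde f_k\in W$ with $\tilde f_k(x_k)=\norm{x_k}=1$ and replace it by its restriction to the interval $\ran(x_k)$, which still norms $x_k$ and is supported inside $\ran(x_k)$; since the $\ran(x_k)$ are successive, $(\tilde f_k)_{k=1}^{n_{2j}}$ is $\mc{A}_{n_{2j}}$-admissible, so $f:=m_{2j}^{-1}\sum_{k=1}^{n_{2j}}\tilde f_k\in W$, and disjointness of supports makes all cross terms vanish, giving $f(x)=m_{2j}^{-1}n_{2j}^{-1}\sum_{k=1}^{n_{2j}}\tilde f_k(x_k)=m_{2j}^{-1}$, whence $\norm{x}\ge m_{2j}^{-1}$.
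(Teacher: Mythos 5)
Your treatment of the case $i<j$ follows the intended route (the paper gives no proof here, deferring to the basic inequality and the references, and this is exactly what one does): apply the basic inequality with $\lambda_k=n_j^{-1}$ and evaluate $h_1$ on the basis average via Lemma~\ref{basisaverages}. Only your final accounting is loose: bounding each of $n_j^{-1}$ and $m_j^{-2}$ separately by $(m_im_j)^{-1}$ gives $2+1+1=4$, i.e.\ $\frac{4C}{m_im_j}$; to recover the constant $3$ note instead that $n_j^{-1}+m_j^{-2}\le 2m_j^{-2}\le (m_im_j)^{-1}$, since $m_j\ge m_i^{5}\ge 2m_i$. This is cosmetic.

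The case $i\ge j$ is where your argument has a genuine gap. You assert that the indices $k$ with $j_k\le i$ form an initial segment whose size can be controlled so as to contribute $2C/n_j$. Nothing bounds the cardinality of $\{k:j_k\le i\}$ by $2$: if $i\ge j_{n_j}$ this set is all of $\{1,\dots,n_j\}$, and for each such $k$ the estimate $\abs{f(x_k)}\le C$ yields, after averaging, a contribution of order $C$, not $2C/n_j$. The ingredient you are missing is condition (2) of Definition~\ref{defRIS}: writing $f=m_i^{-1}\sum_l f_l$, for the indices with $i\ge j_{k+1}$ one has $\#\{l:\supp f_l\cap\supp x_k\ne\emptyset\}\le\#\supp(x_k)\le\e m_{j_{k+1}}\le\e m_i$, hence $\abs{f(x_k)}\le C\e$; the crude bound $\abs{f(x_k)}\le C$ is then needed only for the at most one $k$ with $j_k\le i<j_{k+1}$, while RIS condition (3) covers $i<j_k$ as you say. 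Note, however, that even this corrected trichotomy (equivalently, the basic inequality itself) delivers $\frac{C}{m_i}+\frac{C}{n_j}+C\e$ with $\e=m_j^{-2}$, i.e.\ $\frac{C}{m_i}+\frac{2C}{m_j^2}$ rather than the literal $\frac{C}{m_i}+\frac{2C}{n_j}$; the stated form would require $\e\le n_j^{-1}$. So your diagnosis that the leftover $Cm_j^{-2}$ cannot be absorbed into $2C/n_j$ was correct, but the cure is not the bookkeeping you propose. The discrepancy is harmless downstream: since $\frac{C}{m_{2j}}+\frac{2C}{m_{2j}^2}\le\frac{2C}{m_{2j}}$, your derivation of the ``in particular'' clause --- which, together with your lower-bound construction via $m_{2j}^{-1}\sum_k\tilde f_k$, is correct --- goes through unchanged.
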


\begin{definition}
 A pair $(x,f)$  with $x\in X_{W}$ and $f\in W$ is said to be a
 $(C,j)$-exact pair if the following conditions are satisfied:
 \begin{enumerate}
 \item $1\leq \norm{x}\leq C$, for every weighted functional $g\in W$  with $w(g)=m_{i}$,
   $i\ne j$ we have $\abs{g(x)}\le \frac{3C}{m_{i}}$ if $i<j$ , while
   $\abs{g(x)}\leq\frac{C}{m_{j}^{2}}$ if $i>j$.
 \item $f$ is  weighted   and $w(f)=m_{j}$.
 \item $f(x)=1$ and $\ran(f)\subset\ran(x)$.
 \end{enumerate}
\end{definition}
The primitive  example of a $(2,2j)$-exact pair is
\begin{equation}
  \label{exactpair}
(\frac{m_{2j}}{n_{2j}}\sum_{k=1}^{n_{2j}}e_{k},m_{2j}^{-1}\sum_{k=1}^{n_{2j}}e_{k}^{*}).  
\end{equation}
Indeed using   Lemma~\ref{basisaverages} and Remark~\ref{basisaverages2}  
it  readily follows that  the pair given  by \eqref{exactpair} is a $(2,2j)$-exact pair.
Also  for  every  block subspace  $Y$ of $X_{W}$     we can find a
$(6,2j)$-exact pair  $(y,f)$ with $y\in Y$.
Indeed let 
$(x_{k})_{k=1}^{n_{2j}}$ be  a $(3,m_{2j}^{-3})$-RIS      where
every $x_{k}\in Y$ is a  $2-\ell_{1}^{n_{2j_{k}}}$-average with $\norm{x_{k}}>1$.
Choose for every $k$  a $f_{k}\in W$, such that $f_{k}(x_{k})=1$ and
$\ran(f_{k})\subset\ran(x_{k})$.
Then from Proposition~\ref{eris}
we get that that
pair  $(y,f)$ where  $y=\frac{m_{2j}}{n_{2j}}\sum_{k=1}^{n_{2j}}x_{k}$  and
$f=m_{2j}^{-1}\sum_{k=1}^{n_{2j}}f_{k}$, is  a  $(6,2j)$-exact pair.

\begin{definition}\label{defSEP}
  Let $C\geq 1$ and $j_{0}\in\N, j_{0}\geq2$.  We call a pair $(x,f)$ for $x\in
  X_{W}$ and $f\in W$ a $(C,j_{0})$-standard exact pair (SEP) if
  there exists a $(C, m_{j_{0}}^{-2})$-RIS  $(x_{k})_{k=1}^{n_{j_{0}}}$ with
  \begin{enumerate}
  \item $x=\frac{m_{j_{0}}}{n_{j_{0}}}\sum_{k=1}^{n_{j_{0}}}x_{k}$.
 \item $f=m_{j_{0}}^{-1}\sum_{k=1}^{n_{j_{0}}}f_{k}$ where $f_{k}\in W$
   with $f_{k}(x_{k})=1$ and $\ran(f_{k})=\ran(x_{k})$.
    \item $x_{k}$ is a  $\ell_{1}^{n_{k}}$-average and  $n_{j_{0}}^{2}<m_{n_{1}}$. 
  \end{enumerate}
\end{definition}
 
\begin{proposition}\label{basicinequality2}
Let $j\in\N$    and $(x_{i},f_{i})_{i=1}^{n_{2j-1}}\subset X_{W}\times W$ such that
$(x_{i})_{i=1}^{n_{2j-1}}$ is a RIS  and  $(x_{i},f_{i})$ is  standard
exact pair for every $i$.   Let  $x=n_{2j-1}^{-1}\sum_{i=1}^{n_{2j-1}}x_{i}\in
X_{W}$ and assume that  for every $g\in W$ with $w(g)=m_{2j-1}$ we have
$\abs{g(x)}\leq \frac{C}{m^{2}_{2j-1}}$.  Then
\begin{equation}
  \label{eq:6}
  \norm{x}_{W}\leq\frac{4C}{m_{2j-1}^{2}}.
  \end{equation}
\end{proposition}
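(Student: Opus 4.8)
The plan is to bound $\norm{x}_W = \sup\{f(x): f\in W\}$ by splitting the supremum according to the type of $f$, and in the main case apply the basic inequality (the Proposition preceding Corollary~\ref{eris}) together with the auxiliary-space estimate in Lemma~\ref{basisaverages}. First I would record the key structural fact: since each $(x_i,f_i)$ is a $(C,j_i)$-SEP, the vector $x_i$ is an average $x_i = \frac{m_{j_i}}{n_{j_i}}\sum_k x^i_k$ of a $(C,m_{j_i}^{-2})$-RIS, and the whole collection $(x_i)_{i=1}^{n_{2j-1}}$ is assumed to be a RIS; so $(C',\e)$-RIS estimates apply to $(x_i)$ for a suitable constant with $\e$ as small as we like (in particular $\e = m_{2j-1}^{-2}$ after passing to the relevant weight thresholds). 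The hypothesis $\abs{g(x)}\le C m_{2j-1}^{-2}$ for every $g\in W$ with $w(g)=m_{2j-1}$ is exactly condition~\eqref{eq:9} of the basic inequality with $j_0 = 2j-1$ (up to constants and the interval restriction, which one gets from the standard observation that a restriction $g|_E$ of a weighted functional is again a norm-one multiple of a weighted functional with the same weight, or is handled directly).

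**Next** I would apply the basic inequality to an arbitrary $f\in W$ with $f(x) = \abs{f(x)}$, writing $x = n_{2j-1}^{-1}\sum_i x_i = \sum_i \lambda_i x_i$ with $\lambda_i = n_{2j-1}^{-1}$. The basic inequality produces $g_1 = h_1$ or $g_1 = e_t^* + h_1$ with $h_1\in W_{aux}$, $w(h_1)=w(f)$, and $g_2$ with $\norm{g_2}_\infty \le \e$, such that $\abs{f(x)} \le C'(g_1+g_2)(\sum_i \abs{\lambda_i} e_i)$. The term $C' g_2(\sum_i \abs{\lambda_i}e_i) \le C'\e \cdot n_{2j-1}\cdot n_{2j-1}^{-1} = C'\e$ is negligible (choosing $\e$ small enough, $\le m_{2j-1}^{-2}$). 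The $e_t^*$ contribution is at most $C' n_{2j-1}^{-1}$, again negligible. So the whole matter reduces to estimating $h_1(y)$ where $y = n_{2j-1}^{-1}\sum_{i=1}^{n_{2j-1}} e_i$ is (a scalar multiple of) an $n_{2j-1}$-average of the basis of $X_{aux}$, i.e. $y = \frac{1}{m_{2j-1}}\cdot\frac{m_{2j-1}}{n_{2j-1}}\sum_i e_i$.

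**Here is the crux.** By the ``moreover'' clause of the basic inequality, since $w(f)\ne m_{2j-1} = m_{j_0}$, I may take $h_1$ to have a tree analysis all of whose weighted nodes $h_t$ satisfy $w(h_t)\ne m_{2j-1}$. That is precisely the hypothesis of the second assertion of Lemma~\ref{basisaverages}: applied to the $n_{2j-1}$-average $\frac{m_{2j-1}}{n_{2j-1}}\sum_i e_i$ of the auxiliary basis, it gives $\abs{h_1(\frac{m_{2j-1}}{n_{2j-1}}\sum_i e_i)} \le 2 m_{2j-1}^{-2}$, hence $\abs{h_1(y)} \le 2m_{2j-1}^{-3}$, and a fortiori $\le 2m_{2j-1}^{-2}$. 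Combining the three pieces, $\abs{f(x)} \le C'(2m_{2j-1}^{-2} + m_{2j-1}^{-2} + \e) \le 4C m_{2j-1}^{-2}$ after absorbing constants (here one should track that the basic-inequality constant times $2$ plus lower-order terms stays below $4C$; since $f$ was arbitrary, \eqref{eq:6} follows by taking the supremum. The one case not covered by $w(f)\ne m_{2j-1}$, namely $w(f) = m_{2j-1}$, is handled directly: this is exactly the standing hypothesis $\abs{g(x)}\le Cm_{2j-1}^{-2}$; and functionals $f$ that are not weighted (basis vectors $\pm e_t^*$, or convex combinations) are either trivially small on $x$ or reduce to the weighted case by the tree analysis.

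**The main obstacle** is bookkeeping rather than conceptual: one must check that applying the basic inequality is legitimate here, i.e. that the compound vector $x$ built from SEPs really behaves like a RIS vector so that hypothesis~\eqref{eq:9} (with the interval restriction) is met, and one must make sure the numerical constant coming out of the basic inequality — together with the $\frac{3}{2}$-type losses in Lemma~\ref{i<j} and the RIS thresholds — does not exceed the clean constant $4C$ claimed. I expect the proof to choose the RIS parameter $\e$ depending on $m_{2j-1}$ and $n_{2j-1}$ at the outset, and then the three error terms are each $\le C m_{2j-1}^{-2}$ (or smaller), summing to the stated bound.
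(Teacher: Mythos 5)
Your proposal takes essentially the same route as the paper, which itself only sketches this proof and defers to \cite{AManStudia} and \cite{ATod}: treat $w(f)=m_{2j-1}$ directly from the standing hypothesis, and for $w(f)\neq m_{2j-1}$ apply the basic inequality with its ``moreover'' clause (the hypothesis on weight-$m_{2j-1}$ functionals serving as \eqref{eq:9}), reduce to the auxiliary average $\frac{m_{2j-1}}{n_{2j-1}}\sum_k e_k$, and invoke the second part of Lemma~\ref{basisaverages} to obtain the $2/m_{2j-1}^{2}$ bound, with the $e_t^*$ and $g_2$ terms absorbed into the constant. This is correct in outline and matches the intended argument, including your (appropriate) caveat that the hypothesis must really be read in its interval form to feed into \eqref{eq:9}.
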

This result follows  from the basic inequality and especially the moreover part. We refer the reader 
to   \cite{AManStudia} 
Proposition 4.9   or \cite{ATod} Proposition II.19  for a proof.

A consequence of the basic inequality is the reflexivity of the
space $X_{W}$.
 \begin{proposition}\label{reflex1}
   The space $X_{W}$ is reflexive.
 \end{proposition}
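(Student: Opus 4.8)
The plan is to prove reflexivity of $X_W$ via James's criterion: a Banach space with a basis is reflexive if and only if the basis is both boundedly complete and shrinking. Since the norming set $W$ contains all $\pm e_n^*$ and is closed under restrictions to intervals, the unit vector basis $(e_n)_n$ is a bimonotone Schauder basis of $X_W$; so it suffices to verify these two properties. For bounded completeness, suppose $(a_n)_n$ are scalars with $\sup_N \norm{\sum_{n=1}^N a_n e_n}_W < \infty$ but $\sum_n a_n e_n$ does not converge; then one extracts a seminormalized block sequence $(x_k)_k$, $x_k = \sum_{n\in I_k} a_n e_n$ with $I_1 < I_2 < \cdots$ and $\norm{x_k}_W \geq \delta > 0$ for all $k$. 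By a perturbation one may assume each $x_k$ is supported far enough out, and by Lemma~\ref{l1averages} one may in fact pass to (normalized) $2$-$\ell_1^{n_{2j}}$-averages inside the block subspace $\overline{\langle x_k \rangle}$, hence by the Proposition following Definition~\ref{defRIS} one obtains a normalized $(3,\e)$-RIS $(z_k)_k$ with $z_k$ in that block subspace. Then for any $j$, Corollary~\ref{eris} gives $\norm{n_{2j}^{-1}\sum_{k=1}^{n_{2j}} z_k}_W \geq m_{2j}^{-1}$, which combined with the fact that each $z_k$ is a bounded-coefficient combination of the $x_k$'s forces $\sup_N \norm{\sum_{n=1}^N a_n e_n}_W$ to be infinite, a contradiction. (Here one uses that if $\sup_N\norm{\sum_1^N a_n e_n}$ is finite then any block sequence of normalized averages of the $x_k$'s would have to have norms tending to $0$, contradicting the uniform lower bound $m_{2j}^{-1}$ as $j \to \infty$ is replaced by a fixed large $j$.)

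For the shrinking property, the goal is to show that for every $f^* \in X_W^*$ (equivalently, in the weak-$*$ closed convex hull of $W$) and every $\e > 0$, one has $\norm{f^*|_{[e_n : n > N]}} < \e$ for $N$ large. Equivalently, no normalized block sequence $(x_k)_k$ admits a functional $f^*$ with $f^*(x_k) \geq \delta$ for all $k$. Suppose toward a contradiction such $(x_k)_k$ and $f^*$ exist. Again pass to a normalized $(3,\e)$-RIS $(z_k)_k$ in $\overline{\langle x_k\rangle}$ via Lemma~\ref{l1averages} and the RIS Proposition; one can arrange that $f^*(z_k) \geq \delta'$ for a slightly smaller $\delta' > 0$ and that the $z_k$ are a RIS with a fast-growing index sequence. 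For each even $j$, form $y_j = n_{2j}^{-1}\sum z_k$ over an appropriate block of $z_k$'s; then Corollary~\ref{eris} gives $\norm{y_j}_W \leq 2C m_{2j}^{-1} \to 0$, while linearity gives $f^*(y_j) \geq \delta'$ (up to a controlled error since $f^*$ need only be tested against finitely many blocks). This yields $\delta' \leq f^*(y_j) \leq \norm{f^*}\,\norm{y_j}_W \to 0$, a contradiction. Hence the basis is shrinking, and with bounded completeness established, $X_W$ is reflexive by James's theorem.

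The main obstacle is the bounded-completeness half, and specifically the step of replacing an arbitrary seminormalized block sequence by a RIS while keeping control of the original partial sums. The subtlety is that the averaging operations that produce $\ell_1^{n_{2j}}$-averages and then RIS's live inside the block subspace spanned by the $x_k$, but one needs to translate a lower bound on $\norm{n_{2j}^{-1}\sum z_k}_W$ back into an upper-bound violation for $\sup_N\norm{\sum_1^N a_n e_n}_W$; this requires that the coefficients expressing the averages $z_k$ in terms of the blocks $x_k$ (and hence in terms of the $e_n$) stay bounded, which follows from the James-type construction in Lemma~\ref{l1averages} but must be invoked carefully. A cleaner route, which I would actually follow, is to cite the standard fact (as in \cite{ATod}, \cite{AManStudia}) that for mixed Tsirelson spaces of this type the basis is boundedly complete because $X_W$ contains no isomorphic copy of $c_0$ — indeed the lower $\ell_1^{n_{2j}}$-average estimate $\norm{n_{2j}^{-1}\sum_{k=1}^{n_{2j}} x_k}_W \geq m_{2j}^{-1}$ with $m_{2j} = o(n_{2j})$ precludes the $c_0$-behaviour that a non-boundedly-complete basis of a space with no $c_0$ subspace would exhibit — and that it is shrinking because $X_W$ contains no isomorphic copy of $\ell_1$, which in turn follows from Proposition~\ref{basicinequality2} and Corollary~\ref{eris} showing that averages of RIS's have norms decaying like $m_{2j}^{-1}$, incompatible with an $\ell_1$ lower estimate. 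Assembling these with James's theorem completes the proof.
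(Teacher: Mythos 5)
The paper does not prove this proposition in-house; it defers to \cite{ATod}, Proposition II.28, which, like your proposal, goes through James's criterion (the unit vector basis is boundedly complete and shrinking). So your overall strategy is the intended one, but there are genuine gaps in the execution, the most serious being in the route you say you would actually follow.

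The ``cleaner route'' is invalid. The equivalences ``boundedly complete $\Leftrightarrow$ no copy of $c_0$'' and ``shrinking $\Leftrightarrow$ no copy of $\ell_1$'' are James's theorems for \emph{unconditional} bases. The basis of $X_W$ is only bimonotone: $W$ is closed under restrictions to intervals, not to arbitrary subsets of $\N$, and for the sets $W$ of interest (e.g.\ $W_{hi}$) the basis is necessarily conditional, since an HI space contains no unconditional basic sequence. James's quasi-reflexive space, which contains neither $c_0$ nor $\ell_1$ yet is non-reflexive and has a shrinking, non-boundedly-complete basis, shows that these implications fail for conditional bases. So ``$X_W$ contains no $c_0$ and no $\ell_1$'' cannot, by itself, close the argument here; you must verify the two basis properties directly.

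Your direct arguments also have soft spots. For bounded completeness the RIS detour is both unnecessary and, as written, inconclusive: a lower bound $\norm{n_{2j}^{-1}\sum_k z_k}\geq m_{2j}^{-1}$, with $m_{2j}^{-1}\to 0$, does not ``force'' the partial sums of $\sum_n a_ne_n$ to be unbounded, since the normalized averages $z_k$ may have uncontrolled coefficients over the original blocks. The correct (and much shorter) argument applies the even operation directly to the seminormalized blocks: choosing $f_k\in W$ with $f_k(x_k)\geq\delta$ and $\ran(f_k)\subset\ran(x_k)$, the functional $m_{2j}^{-1}\sum_{k=1}^{n_{2j}}f_k\in W$ gives $\norm{\sum_{k=1}^{n_{2j}}x_k}\geq \delta\, n_{2j}/m_{2j}\to\infty$, contradicting (via bimonotonicity) the boundedness of the partial sums. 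For the shrinking half, the step ``one can arrange that $f^*(z_k)\geq\delta'$'' is precisely the nontrivial point: a normalized $2$-$\ell_1^{n}$-average produced inside $[x_k]$ is not automatically bounded below under $f^*$; one needs the averages (and the subsequent RIS) to be built as non-negative combinations of the $x_k$ with coefficient sums bounded below, which can be extracted from the James-type proof of Lemma~\ref{l1averages} but must be stated and used rather than assumed. Only then does Corollary~\ref{eris} deliver the contradiction $\delta'\leq \norm{f^*}\cdot 2Cm_{2j}^{-1}$.
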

 We refer to \cite{ATod} Proposition II.28, for a proof.
 As a consequence of the reflexivity of the space $X_{W}$  and the rational convexity of the set $W$ we get the following corollary.
 \begin{corollary}\label{dualball1}
For the space   $X_{W}$ we have that $B_{ X^{*}_{W}}=\overline{W}^{\norm{\cdot}}$.
 \end{corollary}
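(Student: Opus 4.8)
The plan is to establish the two inclusions separately, the easy one being $\overline{W}^{\norm{\cdot}}\subseteq B_{X_W^*}$. By the very definition of the norm $\norm{\cdot}_W=\sup\{f(\cdot):f\in W\}$, every $f\in W$ satisfies $|f(x)|\leq\norm{x}_W$ for all $x$, so $W\subseteq B_{X_W^*}$; since $B_{X_W^*}$ is norm closed, $\overline{W}^{\norm{\cdot}}\subseteq B_{X_W^*}$.

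For the reverse inclusion I would first record that $K:=\overline{W}^{\norm{\cdot}}$ is a symmetric, convex, norm-closed subset of $X_W^*$. Symmetry and norm-closedness are immediate from the corresponding properties of $W$. Convexity is where rational convexity of $W$ enters: given $f,g\in W$ and $\lambda\in[0,1]$, choose rationals $r_n\in[0,1]$ with $r_n\to\lambda$; then $r_nf+(1-r_n)g\in W$ and this sequence converges in norm to $\lambda f+(1-\lambda)g$, so the latter lies in $K$. A routine approximation of arbitrary elements of $K$ by elements of $W$ then gives full convexity of $K$. Being convex and norm closed, $K$ is weakly closed, and here reflexivity of $X_W$ (Proposition~\ref{reflex1}) is used: in a reflexive space the weak and weak$^*$ topologies on $X_W^*$ agree, so $K$ is weak$^*$-closed.

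Now suppose, toward a contradiction, that there exists $x^*\in B_{X_W^*}\setminus K$. Since $K$ is convex and weak$^*$-closed, the Hahn--Banach separation theorem in the dual pair $(X_W,X_W^*)$ produces $x\in X_W$ and a scalar $c$ with $f(x)\leq c<x^*(x)$ for every $f\in K$. Because $W$ is symmetric, $\sup_{f\in W}f(x)=\sup_{f\in W}|f(x)|=\norm{x}_W$, and hence $\norm{x}_W\leq c<x^*(x)\leq\norm{x^*}\,\norm{x}_W\leq\norm{x}_W$, which is absurd. Therefore $B_{X_W^*}\subseteq K$, and combined with the first inclusion this yields $B_{X_W^*}=\overline{W}^{\norm{\cdot}}$.

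The only steps that are not purely formal are the convexity of $\overline{W}^{\norm{\cdot}}$, for which rational convexity of $W$ is indispensable, and the passage from norm-closedness (hence weak closedness) to weak$^*$-closedness via reflexivity, which is precisely what guarantees that the separating functional can be taken in $X_W$ rather than merely in $X_W^{**}$. Neither presents a real difficulty.
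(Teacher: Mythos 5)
Your proof is correct and is precisely the argument the paper has in mind: the paper offers no written proof, only the remark that the corollary follows from reflexivity of $X_W$ and rational convexity of $W$, and your write-up supplies exactly those two ingredients (rational convexity giving convexity of $\overline{W}^{\norm{\cdot}}$, reflexivity upgrading norm/weak closedness to weak$^*$ closedness so that Hahn--Banach separation can be performed against elements of $X_W$).
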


\section{Standard HI spaces}\label{standardhi}
In this section we show how we can refine    $W$ in order to get
a norming set $W_{hi}$ which will be the norming set of an HI-space.
We start with the definition of the special sequences.

  Let $\Q_{s}=\{(f_{1},f_{2},\dots, f_{l}): f_{i}\in c_{00}(\N),
  f_{i}\ne0, f_{i}(n)\in\Q\, \forall n\in\N, \forall i\leq l, f_{1}<f_{2}<\dots<f_{l}\}$.
We fix a pair  $\Omega_{1},\Omega_{2}$ of disjoint infinite subsets  of $\N$.
  From the fact that  $\Q_{s}$ is countable  there exists an injective
  coding function 
  $\sigma: \Q_{s}\to \{2j: j\in\Omega_{2}\}$   such  that
  \begin{equation}
    \label{eq:16}
        m_{\sigma(f_{1},f_{2}\dots,f_{l} ) }>
     \max\{
     \frac{1}{
       \abs{f_{i}(e_{k})
       }
     }:
     k\in\supp
     f_{i},i\leq l
     \}\cdot
    \maxsupp f_{l}.
    \end{equation}
\begin{definition}\label{sseq}
For  $j\in\N$ and $k\leq n_{2j-1}$  a sequence $(f_{1},\dots,f_{k})$ of
elements  of  $W$ is said to be a $(2j-1)$ special sequence   if
$w(f_{1})=m_{2j_{1}}$ with $j_{1}\in \Omega_{1}$, $m_{2j_{1}}\geq n_{2j-1}^{2}$  and
$w(f_{i+1})=m_{\sigma(f_{1},\dots, f_{i})}$ for every $i\geq 1$.
\end{definition}
\begin{remark}\label{treespecial}
The significant property    of  special sequences    is the tree
like behavior, i.e.,  if $j_{1},j_{2}\in\N$,
$(f_{i})_{i=1}^{n_{2j_{1}}-1}$  is an $(2j_{1}-1)$-special sequence,
$(g_{i})_{i=1}^{n_{2j_{2}}-1}$  is an $(2j_{2}-1)$-special sequence
and if for some $k,l$ we have  $w(f_{k})=w(g_{l})$, then $k=l$  and
$f_{i}=g_{i}$ for every $i< k$.  Moreover if $f_{k}\neq g_{k}$  then
$w(f_{k+l})\ne  w(g_{k+i})$ for every $i,l\geq 1$.
\end{remark}
\subsection{The norming set of a standard HI-space}
We consider the minimal  subset $W_{hi}$  of $c_{00}(\N)$
satisfying the following
\begin{enumerate}
\item $\pm e^{*}_{n}\in W_{hi}$ for every $n\in\N$.
 \item $W_{hi}$  is closed in the $(\mc{A}_{n_{2j}},m_{2j}^{-1})$-operations.
 \item   $W_{hi}$  is closed in the
   $(\mc{A}_{n_{2j-1}},m_{2j-1}^{-1})$-operations   on special sequences, i.e.,
for every 
$(2j-1)$-special sequence  $(f_{1},\dots,f_{k})$ with $f_{i}\in W_{hi}$
for every $i$,   the functional 
$
m_{2j-1}^{-1}\sum_{i=1}^{k}f_{i}\in W_{hi}$.
\item  $W_{hi}$ is symmetric
  and closed under the restriction of its elements to intervals of
  $\N$.
\item $W_{hi}$ is closed under rational convex combinations.
\end{enumerate}
The set  $W_{hi}$ is defined inductively as follows: Set
$W_{hi}^{0}=\{\pm e^{*}_{n}:n\in\N\}$  and assume that $W_{hi}^{n}$ has
been defined. To get $W_{hi}^{n+1}$ , first  we close the set of the
functionals $\cup_{k=1}^{n}W_{hi}^{k}$ in the  $(\mc{A}_{n_{2j}},
m_{2j}^{-1})$-operations,
and then  we close the above set of functionals in the
$(\mc{A}_{n_{2j-1}},m_{2j-1}^{-1})$ operations on special sequences. The set $W_{hi}^{n+1}$
is the set of the rational convex combinations of restrictions to intervals of the functionals we
defined in the above two steps.

We denote   by  $X_{hi}$ the completion of $c_{00}(\N)$ under the norm induced
by $W_{hi}$.

Note that by the definition  of $W_{hi}$ we get  that it is an  EO-complete
subset of $W_{mT}$ and hence all the results of the previous section
hold for the space $X_{hi}$.

\begin{definition}
 Let  $j\in\N$. A sequence
 $(x_{1},f_{1},x_{2},f_{2},\dots,x_{n_{2j-1}},f_{n_{2j-1}})$ with
 $x_{i}\in X_{hi}$, $f_{i}\in W_{hi}$ is said to be $(C,2j-1)$-dependent   if

 \begin{enumerate}
 
 \item  $(f_{1},\dots,f_{n_{2j-1}})$ is a $W_{hi}-(2j-1)$ special sequence
 
  \item each pair $(x_{i},f_{i})$ is a $(C,2j_{i})$-SEP  where $m_{2j_{i}}=w(f_{i})$.
 
 \end{enumerate}
 
\end{definition}

It follows from the definition of a $(C,2j-1)$-dependent sequence
$(x_{i},f_{i})_{i=1}^{n_{2j-1}}$  that the block sequence
$(x_{i})_{i=1}^{n_{2j-1}}$ is a $3C$-RIS.
\begin{remark}\label{hiproperty}
From the results of the previous
section we get that in every block subspace $Y$ of $X_{hi}$ we can
find for every $j\in\N$  an $(3,2j)$-SEP  $(x,f)$ with  $x\in Y$. Hence  if $Y,Z$ are any two
block subspaces of $X_{hi}$  we can chooce a
a $(3,2j-1)$-dependent sequence $(x_{i},f_{i})_{i=1}^{n_{2j-1}}$
with $x_{2i-1}\in Y$  and $ x_{2i}\in Z$ for every $i$.
\end{remark}
  Regarding the norm of the average
  $n_{2j-1}^{-1}\sum_{i=1}^{n_{2j-1}}x_{i}$
  of the vectors  of a 
  $(C,2j-1)$-dependent sequence we have the following result.

  \begin{proposition}\label{smallnorm}
Let  $(x_{1},f_{1},x_{2},f_{2},\dots,x_{n_{2j-1}},f_{n_{2j-1}})$
be $(C,2j-1)$-dependent sequence.  Then
\begin{equation}
  \label{eq:5}
  m_{2j-1}^{-1}\leq
\norm{\sum_{i=1}^{n_{2j-1}}\frac{x_{i}}{n_{2j-1}}}\,\,\,\andd\,\,
\norm{\sum_{i=1}^{n_{2j-1}}(-1)^{i+1}\frac{x_{i}}{n_{2j-1}}}\leq4 Cm^{-2}_{2j-1}.
\end{equation}

\end{proposition}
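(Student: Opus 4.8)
The lower estimate $m_{2j-1}^{-1}\le \norm{n_{2j-1}^{-1}\sum_i x_i}$ is immediate: the functional $f=m_{2j-1}^{-1}\sum_{i=1}^{n_{2j-1}} f_i$ lies in $W_{hi}$ since $(f_1,\dots,f_{n_{2j-1}})$ is a $(2j-1)$-special sequence of elements of $W_{hi}$, and $f\big(n_{2j-1}^{-1}\sum_i x_i\big)=n_{2j-1}^{-1}\sum_i f_i(x_i)=1$ because each $(x_i,f_i)$ is a SEP (so $f_i(x_i)=1$ and $\ran f_i\subset\ran x_i$, which kills the cross terms). So the real content is the upper bound for the alternating average $x=n_{2j-1}^{-1}\sum_i (-1)^{i+1}x_i$.

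The plan is to apply Proposition~\ref{basicinequality2} to the sequence of scalars $\lambda_i=(-1)^{i+1}$. That proposition reduces the problem to verifying its hypothesis: for every $g\in W_{hi}$ with $w(g)=m_{2j-1}$ one has $\abs{g(x)}\le C' m_{2j-1}^{-2}$ for a suitable constant $C'$ (comparable to $C$). Granting this, Proposition~\ref{basicinequality2} yields $\norm{x}_{W_{hi}}\le 4C' m_{2j-1}^{-2}$, which is the desired estimate after adjusting the constant; note the $(x_i)_i$ form a $3C$-RIS as remarked after the definition of a dependent sequence, so the RIS hypothesis of Proposition~\ref{basicinequality2} is in place. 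Thus everything hinges on controlling the action of a weight-$m_{2j-1}$ functional $g$ on the alternating average.

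The main obstacle is precisely this estimate, and it is where the tree-like property of special sequences (Remark~\ref{treespecial}) does the work. Fix $g\in W_{hi}$ with $w(g)=m_{2j-1}$; by Corollary~\ref{dualball1} and the structure of $W_{hi}$ we may take a tree analysis of $g$ and, passing to the relevant node, assume $g=m_{2j-1}^{-1}\sum_{l=1}^{k} g_l$ for some $(2j-1)$-special sequence $(g_1,\dots,g_k)$ with $g_l\in W_{hi}$. Compare $(g_l)_l$ with $(f_i)_i$: by Remark~\ref{treespecial}, either the two special sequences agree on a common initial segment up to the first place $l_0$ where they differ, and from that point on all later weights are distinct, or they are ``incomparable'' from the start. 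One then splits $g(x)$ according to this index $l_0$:
\begin{itemize}
\item For $l<l_0$ we have $g_l=f_l$; since $\ran f_l\subset\ran x_l$ and consecutive $f_l$ have successive supports, these terms see only $x_l$ (and possibly one neighbour), and $\abs{f_l(x_l)}\le 1$, contributing at most $O(1)$ terms to the sum, hence at most $O(m_{2j-1}^{-1}\cdot(\text{a few}))$, which must then be divided by the global $n_{2j-1}^{-1}$ — the alternating signs are not even needed here.
\item For $l\ge l_0$ the weight $w(g_l)=m_{i_l}$ differs from every $w(f_i)=m_{2j_i}$; so for each fixed $l$, when $g_l$ acts on $x=n_{2j-1}^{-1}\sum_i(-1)^{i+1}x_i$, every SEP $(x_i,f_i)$ has weight $w(f_i)\ne w(g_l)$, and the SEP/exact-pair estimates (condition (1) in the definition of a $(C,2j_i)$-SEP together with Corollary~\ref{eris}) give $\abs{g_l(x_i)}\le 3C/m_{\min}$ when $w(g_l)$ is small relative to $w(f_i)$ and $\abs{g_l(x_i)}\le C/w(f_i)^2$ otherwise; combined over $i$ inside $\ran(g_l)$ and then summed in $l$ with the factor $m_{2j-1}^{-1}$ out front, and using $m_{2j_1}\ge n_{2j-1}^2$ from the definition of a special sequence together with the rapid growth of the $m_j,n_j$, this sum is $O(C m_{2j-1}^{-2})$.
\end{itemize}
The alternating signs $(-1)^{i+1}$ are what make the $l\ge l_0$ block work without loss: without them a single $g_l$ of weight close to some $w(f_i)$ could be dangerous, but since the weights are forced to be distinct, each $g_l$ is in the ``off-weight'' regime for every $x_i$ and the estimates above apply. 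Assembling the two blocks gives the hypothesis of Proposition~\ref{basicinequality2} with a constant $C'=O(C)$, and the proposition then delivers \eqref{eq:5}. I expect the bookkeeping in the $l\ge l_0$ block — correctly matching each $g_l$ to the $x_i$'s in its range and summing the geometric-type series against the weight growth conditions (1)–(2) at the start of Section~\ref{smt} — to be the only genuinely delicate part; the rest is the standard dependent-sequence argument.
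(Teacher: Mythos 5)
Your overall architecture coincides with the paper's: the lower bound comes from the special functional $F=m_{2j-1}^{-1}\sum_i f_i\in W_{hi}$, and the upper bound is obtained by verifying the hypothesis of Proposition~\ref{basicinequality2} for the alternating average, using the tree property of special sequences (Remark~\ref{treespecial}) together with the off-weight SEP estimates, and then invoking that proposition. (A small slip in the first part: $F\bigl(n_{2j-1}^{-1}\sum_i x_i\bigr)=m_{2j-1}^{-1}$, not $1$ --- you dropped the prefactor $m_{2j-1}^{-1}$ of $F$; the conclusion $m_{2j-1}^{-1}\le\norm{n_{2j-1}^{-1}\sum_i x_i}$ is of course still what comes out.)

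There is, however, a genuine error in your treatment of the common initial segment $l<l_0$, and you have inverted the role of the alternating signs. For every $l<l_0$ one has $g_l=f_l$ and hence $g_l(x_l)=f_l(x_l)=1$: \emph{all} $l_0-1$ of these terms are equal to $1$ in absolute value, not ``$O(1)$ terms'' of them. Without the signs this block contributes $m_{2j-1}^{-1}(l_0-1)$, which after dividing by $n_{2j-1}$ can be as large as $m_{2j-1}^{-1}$ (take $l_0-1$ comparable to $n_{2j-1}$); that is exactly the size of the lower bound and is far too big to verify the required hypothesis $\abs{g(x)}\le C'm_{2j-1}^{-2}$ of Proposition~\ref{basicinequality2}. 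What rescues the estimate is precisely the alternation: $\sum_{l<l_0}(-1)^{l+1}f_l(x_l)=\sum_{l<l_0}(-1)^{l+1}\in\{0,1\}$, so the initial block contributes at most $m_{2j-1}^{-1}$ before normalization, and at most $m_{2j-1}^{-1}n_{2j-1}^{-1}\le m_{2j-1}^{-4}$ after. Conversely, in the block $l\ge l_0$ the signs play no role: there every $g_l$ is off-weight with respect to every $x_i$ and one sums absolute values --- except that you should also account for the single exceptional index at $l_0$ itself, where Remark~\ref{treespecial} permits $w(g_{l_0})=w(f_{l_0})$ with $g_{l_0}\ne f_{l_0}$, producing one term of size $O(C)$ that is absorbed by the factor $m_{2j-1}^{-1}n_{2j-1}^{-1}$ (this is the source of the ``$1$'' in the paper's intermediate bound $2C(1+2m_{2j-1}^{-2})$ for the unnormalized sum). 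With these corrections --- cancellation on the common initial segment, one exceptional term, off-weight estimates beyond it --- your argument closes and agrees with the paper's.
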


 The first inequality in \eqref{eq:5} follows easily. Indeed if $(x_{1},f_{1},x_{2},f_{2},\dots,x_{n_{2j-1}},f_{n_{2j-1}})$
 is a $(C,2j-1)$-dependent  sequence, the properties of the set $W_{hi}$ yield that
 $F=m_{2j-1}^{-1}\sum_{i=1}^{n_{2j-1}}f_{i}\in W_{hi}$. Using this functional
  the first inequality in \eqref{eq:5} readily follows.  Note  also
 $F(\sum_{i=1}^{n_{2j-1}}(-1)^{i+1})x_{i})=0$. It is proved also that
 \[\abs{g(\sum_{i=1}^{n_{2j-1}}(-1)^{i+1}x_{i})}\leq2C(1+ 2m_{2j-1}^{-2})\quad\text{ for every $g\in W_{hi}$ with
     $w(g)=m_{2j-1}$}.\]
 This inequality is a consequence of the following
 two facts:

 The  first one is the tree structure of the special sequences (see
 Remark~\ref{treespecial}). The second one is that  every pair
 $(x_{k},f_{k})$ of a dependent  sequence
 $(x_{i},f_{i})_{i=1}^{n_{2j-1}}$ is a $(C,2j_{k})$-SEP. This
 yields that for every $g\in W_{hi}$ with  $w(g)\ne m_{2j_{k}}$  we have
 $\abs{g(x_{k})}\leq 3C\max\{w(g)^{-1}, m_{2j_{k}}^{-2}\}$
 Combining  the above two facts we get that  the assumption  of
 Proposition~\ref{basicinequality2} holds. This gives us
 the second inequality  in \eqref{eq:5}.

As a corollary of the previous proposition we get the following
corollary.
\begin{corollary}
 The space $X_{hi}$  is an HI space.
\end{corollary}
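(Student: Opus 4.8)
The plan is to derive the HI property of $X_{hi}$ from Proposition~\ref{smallnorm} together with the well-known functional-analytic characterization of hereditary indecomposability: a Banach space $X$ is HI if and only if for every pair of infinite-dimensional closed subspaces $Y,Z$ and every $\delta>0$ there exist $y\in Y$, $z\in Z$ with $\norm{y+z}=1$ and $\norm{y-z}<\delta$. Since $X_{hi}$ has a Schauder basis, a standard perturbation argument lets us reduce to the case where $Y$ and $Z$ are block subspaces, and then we may further pass to block subspaces spanned by normalized block vectors.

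First I would fix block subspaces $Y$ and $Z$ of $X_{hi}$ and a small $\e>0$, and choose $j\in\N$ large enough that $8Cm_{2j-1}^{-1}<\e$ for the relevant constant $C$ (here $C=3$). By Remark~\ref{hiproperty}, one can construct a $(3,2j-1)$-dependent sequence $(x_1,f_1,\dots,x_{n_{2j-1}},f_{n_{2j-1}})$ with $x_{2i-1}\in Y$ and $x_{2i}\in Z$ for all $i$. Split the average into its odd- and even-indexed parts: put
\begin{equation*}
y=\sum_{i\ \text{odd}}\frac{x_i}{n_{2j-1}}\in Y,\qquad z=\sum_{i\ \text{even}}\frac{x_i}{n_{2j-1}}\in Z.
\end{equation*}
Then $y+z=n_{2j-1}^{-1}\sum_{i=1}^{n_{2j-1}}x_i$ and $y-z=n_{2j-1}^{-1}\sum_{i=1}^{n_{2j-1}}(-1)^{i+1}x_i$ (up to a harmless sign). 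Proposition~\ref{smallnorm} gives $\norm{y+z}\geq m_{2j-1}^{-1}>0$ and $\norm{y-z}\leq 4Cm_{2j-1}^{-2}$, whence $\norm{y-z}/\norm{y+z}\leq 4Cm_{2j-1}^{-1}<\e/2$. Normalizing by $\norm{y+z}$ produces vectors in $Y$ and $Z$ realizing the HI inequality with constant $\e$, and letting $\e\to0$ finishes the argument.

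The main obstacle is not in this final assembly, which is essentially bookkeeping, but rather in ensuring the reduction to block subspaces is legitimate and that the dependent sequence can genuinely be placed with alternating vectors in $Y$ and $Z$; both of these are handled by Remark~\ref{hiproperty} and the reflexivity of $X_{hi}$ (Proposition~\ref{reflex1}), which guarantees that arbitrary infinite-dimensional subspaces contain block subspaces up to small perturbation and that the basis is shrinking so the standard gliding-hump arguments apply. One should also double-check that the constant $C$ in Remark~\ref{hiproperty} (namely $3$) is compatible with the constant appearing in Proposition~\ref{smallnorm}; since the remark explicitly produces $(3,2j-1)$-dependent sequences, this is immediate. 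Thus the real content of the corollary is entirely contained in the preceding propositions, and the proof is a short deduction.
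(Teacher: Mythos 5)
Your proof is correct and follows essentially the same route as the paper: both extract a $(3,2j-1)$-dependent sequence alternating between $Y$ and $Z$ via Remark~\ref{hiproperty}, then apply the two estimates of Proposition~\ref{smallnorm} to the sum and the alternating sum to conclude $\dist(S_Y,S_Z)=0$. The reduction from arbitrary subspaces to block subspaces that you spell out is the standard perturbation argument the paper leaves implicit.
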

Indeed let $Y,Z$ be any two block subspaces  of $X_{0}$ and $j\in\N$.
From  Remark~\ref{hiproperty}  we  get
a $(3,2j-1)$-dependent sequence $(x_{i},f_{i})_{i=1}^{n_{2j-1}}$
with $x_{2i-1}\in Y$  and $ x_{2i}\in Z$ for every $i$.
From  \eqref{eq:5}    we  get  that
 \begin{equation}
   \label{eq:4}
  \norm{\sum_{i=1}^{n_{2j-1}}(-1)^{i+1}x_{i}}\leq \frac{12}{m_{2j-1}}\norm{\sum_{i=1}^{n_{2j-1}}x_{i}}.
\end{equation} 
Since $j\in\N$ the above inequality yields that $\dist(S_{Y},S_{Z})=0$ and this implies
that   $X_{0}$ is an HI space.


\section{The defintion of the space $X_{ex}$.}\label{swex}
In this section we define the HI space $X_{ex}$ which contains as an isometric subspace the initial HI space $X_{hi}$.
For this we will define  a norming set $W_{ex}$, a subset of  $c_{00}(\N)$, that  induces  the norm of the space. Actually the set  $W_{ex}$  is a
subset  of the set $W_{mT}$ which is the norming set of the space
$X_{mT}=T[(\mc{A}_{n_{2j}},m_{2j}^{-1})\cup(\mc{A}_{2n_{2j-1}},m_{2j-1}^{-1})]$.
Note that $X_{mT}$  uses in the odd operations the families $\mc{A}_{2n_{2j-1}}$  instead of $\mc{A}_{n_{2j-1}}$ which  were used in the definition of $X_{hi}$. 
\subsection{The definition of  $W_{ex}$.}\label{EOcomplete}
Let $\N=\N_{od}\cup\N_{ev}$, $\N_{od}$ the odd and $\N_{ev}$  the even
positive integers.

We shall denote also by $K=\cup_{j}K^{2j}$ the subset of $W_{mT}$, where
\[
K^{2j}=\{m_{2j}^{-1}\sum_{i\in F}e_{i}^{*}: \#F=n_{2j}\}.
\]
For every $j$, let   $(K_{k}^{2j})_{k\in\N}$,
be disjoint subsets of $K^{2j}$  such that  $\sup\{\min(f):f\in
K_{k}^{2j}\}=+\infty$ for every $k$.
For $g=\sum_{i=1}^{+\infty}a_{i}e^{*}_{i}\in c_{00}(\N)$ we denote by $\widehat{g}$  the
vector $\widehat{g}=\sum_{i}a_{i}e_{2i}^{*} \in c_{00}(\N)$.
We shall denote by $\widehat{K_{k}^{2j}}$  the set $\{\widehat{f}: f\in
K^{2j}_{k}\}$ and by $\widehat{W}_{hi}$  the
set  $\{\widehat{f}:f\in W_{hi}\}$.
\begin{definition}[The coding functions $\varrho_{k}^{2i}, k\in\N, i=0,1,\dots$]\label{varrho}
Let   $(D_{k})_{k\in\N}$  be a sequence of disjoint  subsets of $\wmt$, to be determined later.
For every $i,k\in\N$ we shall denote by $L_{k}^{2i}$ the following subset of $D_{k}$,
\[L_{k}^{2i}=\{f\in D_{k}:w(f)=m_{2i}\,\,\andd \supp(f)\subset\N_{od}\}.\]
For every $k,i\in\N$  let $\varrho_{k}^{2i}:L_{k}^{2i}\to K_{k}^{2i}$
 be an injection such that
\begin{align}
\text{for every $f\in L_{k}^{2i}$ we have}\quad f<\widehat{\varrho_{k}^{2i}(f)}.\quad\label{p1}
\end{align}
\end{definition}
\begin{remark}\label{r5.1}
  From the above definitions it  follows that if $g\in K_{k}^{2i}$  for some $i,k\in\N$  then there is no $n\ne k$ and $f\in L_{n}^{2i}$ such that $g=\varrho_{n}^{2i}(f)\in K_{n}^{2i}$.

  This follows from the fact that the domains of $\varrho_{k}^{2i},\varrho_{n}^{2i}$  are disjoint as well as their ranges 
$K_{k}^{2i},K_{n}^{2i}$.
\end{remark}

Our goal in this subsection is to define an  EO-complete subset   $W_{ex}$
of $W_{mT}$ such that  the restriction of every element $f\in W_{ex}$  to the
set $\N_{ev}$ belongs to  $\widehat{W}_{hi}$ and in particular
$\widehat{W}_{hi}=\{f_{|\N_{ev}}: f\in W_{ex}\}$.
Denoting by $X_{ex}$  the space defined by the set $W_{ex}$, the
aforementioned  property yields that  its subspace
$\widehat{X}_{hi}$ generated by the subsequence
$(e_{2n})_{n\in \N}$ of the basis  is isometric to the initial
HI space $X_{hi}$.    We shall also include in $W_{ex}$ some new special
functionals, named paired special functionals  which will permit us
to export the  HI structure of the space  $\widehat{X}_{hi}$ to the
whole space $X_{ex}$.

We proceed to give three definitions of special functionals which will be used in the definition of the norming    $W_{ex}$. We assume the existence of a family  $(D_{k})_{k\in\N}$ of disjoint subsets of $W_{mT}$ that will be specified later and we consider the corresponding
 functions $\varrho_{k}^{2i}:L_{k}^{2i}\to K_{k}^{2i}$, $k\in\N$,$i\geq 0$ as they were defined in Definition~\ref{varrho}

\begin{definition}\label{pairedspecials}
  Let $j,n\in\N$ and $V\subset W_{mT}$.
We call a 
$(2j-1,V,(\varrho_{k}^{2i})_{k=1}^{n}, i\geq0)$- paired special
sequence
any block sequence
$(f_{1},\widehat{g}_{1},\dots, f_{p},\widehat{g}_{p})$ satisfying
the following:
\begin{enumerate}
\item  $p\leq n_{2j-1}$ and for every $k\leq p$ there exist  $j_{k}\in\N$   and $i_{k}\leq n$  such   that    $f_{k}\in V\cap L_{i_{k}}^{2j_{k}}$ and  $g_{k}=\varrho^{{2j_{k}}}_{i_{k}}(f_{k})$.
\item The finite sequence    $(g_{1},g_{2},\dots,g_{p})$ is a $(2j-1)$-special
    sequence of $W_{hi}$.
\end{enumerate}
\end{definition}

\begin{definition}\label{semispecials}
  Let $j,n\in\N$ and $V\subset W_{mT}$.
We call a $(2j-1,V,(\varrho_{k}^{2i})_{k=1}^{n}),i\geq0)$-semi paired
special sequence any
block   sequence   of the  form $(f_{i},\widehat{g}_{i})_{i=1}^{l}\,^{\frown} (\widehat{g}_{i})_{i=l+1}^{p}$
satisfying the following:
\begin{enumerate}
\item  $p\leq n_{2j-1}$, $l\geq 0$   and
  $(f_{1},\widehat{g}_{1}f_{2},\widehat{g}_{2},\dots,f_{l},\widehat{g}_{l})$ is a
$(2j-1, V,(\varrho_{i}^{2k})_{i=1}^{n},k=0,1,\dots)$-paired special sequence.
\item  For every $m=l+1,\dots,p$ we have that $g_{m}\in
  K^{2j_{m}}_{i_{m}}$ for some  $i_{m}\leq n$,
and  $(g_{1},g_{2},\dots,g_{p})$ is a $(2j-1)$-
    special sequence  of $W_{hi}^{1}$.
\item  There is no   $\phi\in V$ such that
  $(f_{1},\widehat{g}_{1}f_{2},\widehat{g}_{2}\dots,f_{l},\widehat{g}_{l},\phi,\widehat{g}_{l+1} )$   is a
  $(2j-1, V,(\varrho_{k}^{2i})_{k=1}^{n},i=0,1,\dots)$ paired special sequence. 
\end{enumerate}
\end{definition}

The  third definition is the following.
\begin{definition}\label{wspecials}
  Let $j,n\in\N$ and $V\subset \wmt$. We  call a  $(2j-1,V,(\varrho_{k}^{2i})_{k=1}^{n},i\geq 0)$-special sequence any
block  sequence  
  $(f_{1},\widehat{g}_{1},\dots,f_{l},\widehat{g}_{l},\widehat{g}_{l+1},\dots
  ,\widehat{g}_{m},h_{m+1},h_{m+2},\dots,h_{p})$
  satisfying the following:
  \begin{enumerate}
  \item  $p\leq n_{2j-1},$ $0\leq l\leq m$  and
 $(f_{1},\widehat{g}_{1},\dots,f_{l},\widehat{g}_{l},\widehat{g}_{l+1},\dots
  ,\widehat{g}_{m})$ is a  $(2j-1, V,
    (\varrho_{k}^{2i})_{k=1}^{n},i=0,1,\dots)$-semi paired special sequence.
  \item $h_{m+1|\N_{ev}}\in  V\setminus \cup_{j=0}^{+\infty}\cup_{i=1}^{n}\widehat{K}_{i}^{2j}$ and
for every $q\geq1$,    $h_{m+q|\N_{ev}}=\widehat{\phi}_{m+q}\in\widehat{W}_{hi}$ and
$(g_{1},\dots,g_{m},\phi_{m+1},\phi_{m+2},\dots,\phi_{p})$ is a
$(2j-1)$-special sequence of $W_{hi}$.
  \end{enumerate}
  
\end{definition}

\begin{remark}\label{treestructure}
 It follows from the above definitions that   the
 paired special
sequences  are semi paired special seqeunces
and the later are special sequences.
\end{remark}
\subsection{Definition of the norming set $W_{ex}$}
We proceed by induction to define an increasing family of sets $W_{n}$, $n=0,1,2,\dots$ which will be subsets of $W_{mT}$  such that   $W_{ex}=\cup_{n=0}^{\infty}W_{n}$.  For every $n\geq 1$ we shall choose  a set  $D_{n}\subset W_{n}$ such that the sets $(D_{n})_{n\in\N}$ are disjoint and for every $n\in\N$, $i\geq 0$, we will consider the injective function $\varrho_{n}^{2i}:L_{n}^{2i}\to K_{n}^{2i}$ as it was defined in Definition~\ref{varrho}.

Let  $W_{0}=\{\pm e^{*}_{n}:n\in\N\}$ .
Let $n\in\N$ and assume that for every  $k<n$ the sets  $D_{k}\subset W_{k}$ and the functions $\varrho_{k}^{2i}$, $i\geq 0$ have been defined.

We set 
 \[
 U_{n}=\cup_{j}\{m_{2j}^{-1}\sum_{i=1}^{k}f_{i}: (f_{i})_{i=1}^{k}\,\,
 \text{is an $\mc{A}_{n_{2j}}$-admissible  sequence of}\,\,  W _{n-1}\},
 \]
 and $V_{n}=(U_{n}\setminus\cup_{j=0}^{\infty}\cup_{i=n+1}^{+\infty}\widehat{K_{i}^{2j}})\cup W_{n-1}$.

 We set  $D_{n}=V_{n}\setminus W_{n-1}$ and for  $i\geq 0$ we define 
\[L_{n}^{2i}=\{f\in D_{n}: w(f)=m_{2i}\,\,\text{and}\,\,\supp(f)\subset \N_{od}\}.
\]
 For every $i\in\N$ we a choose an injective function 
$\varrho_{n}^{2i}:L_{n}^{2i}\to K_{n}^{2i}
$
satisfying  $f<\widehat{\varrho_{n}^{2i}(f)}$ for every $f\in L_{n}^{2i}$.

It is clear that $D_{n}\cap W_{n-1}=\emptyset$
and hence  $D_{n}$ is disjoint from every $W_{k}$, $k<n$.   Also we have that $L_{n}^{2i}\subset V_{n}$ for every $i\geq 0$.

We define now  for every $j\in\N$ the
$(2j-1,V_{n},(\varrho_{k}^{2i})_{k=1}^{n},i\geq0)$
special functionals.

For $j\in\N$ we  define
\begin{align}
  W_{n}^{2j-1}&=\{\frac{1}{m_{2j-1}}\left(\sum_{i=1}^{l}(f_{i}+\widehat{g}_{i})+
    \sum_{i=l+1}^{m}\widehat{g}_{i}+\sum_{i=m+1}^{p}h_{i}\right):
 \label{eq3}
  \\
 &
 \resizebox{0.92\hsize}{!}{$
  (f_{1},\widehat{g}_{1},\dots,f_{l},\widehat{g}_{l},
  \widehat{g}_{l+1},\dots\widehat{g}_{m},h_{m+1},\dots,h_{p})\,
\text{is a}\,\,
   (2j-1, V_{n},(\varrho^{2i}_{k})_{k=1}^{n}, i=0,1,\dots)$}
\notag
  \\
&\hspace{7cm}\text{special sequence}.
\}
                                                   \notag
\end{align}
We set
\[W_{n}=\conv_{\Q}\{Ef:  f\in  V_{n}\cup\cup_{j=1}^{\infty}
  W_{n}^{2j-1}:E\,\text{interval of}\,  \N\}.\]

The norming set $W_{ex}$ is the set $W_{ex}=\cup_{n=0}^{+\infty}W_{n}$.

From the inductive   assumption  we have that  $W_{n-1}$ is a subset  of  set $W_{mT}$. It follows that the set $U_{n}$   is also subset of $W_{mT}$.  Hence for every $j\in\N$ the set   $W_{n}^{2j-1}$ is a subset of $W_{mT}$ and finally the set $W_{n}$  is subset of $W_{mT}$.
 It is clear also that the sets $W_{n-1}$ and $D_{n}$ are subsets of $W_{n}$.
\begin{remarks}~\label{remarksonspecial}
  \begin{enumerate}
\item[a)]If $(f_{1},\widehat{g}_{1},f_{2},\widehat{g}_{2}, f_{l},\widehat{g}_{l}, \widehat{g}_{l+1},\widehat{g}_{l+2},\dots,\widehat{g}_{p})$
is  a 
$(2j-1, V_{n},(\varrho^{2i}_{k})_{k=1}^{n},i=0,1,\dots)$-semi-paired
special and $l\geq 1$,
then for the functional $\widehat{g}_{l+1}$  we have that 
\begin{enumerate}
\item[i)]  either  $g_{l+1}\notin\cup_{k=1}^{n}\varrho_{k}^{2j_{l+1}}(L_{k}^{2j_{l+1}})$, $w(g_{l+1})=m_{2j_{l+1}}$,
\item[ii)] or   $g_{l+1}=\widehat{\varrho^{2j_{l+1}}_{k_{l+1}}(f_{l+1})}$ for  an  $f_{l+1}\in L_{k_{l+1}}^{2j_{l+1}}$, $k_{l+1}\leq n$
  and    $\minsupp f_{l+1}\leq\maxsupp \widehat{g}_{l}$.
  
Note that in both  cases  the definition of the  coding functions $\varrho_{k}^{2i}$ and the disjointness of the sets $K_{k}^{2i}$  implies that
there  is no  $m>n$  and a $(2j-1,V_{m},(\varrho_{k}^{2i})_{k=0}^{m},i\geq0)$- paired
special sequence  having as an initial segment the
sequence
$(f_{i},\widehat{g}_{i})_{i=1}^{l+1}$.
\end{enumerate}
\item[b)] For every $g_{1}\in \varrho^{2j_{1}}_{i_{1}}(L_{i_{1}}^{2j_{1}})$  with
$w(g_{1})=m_{2j_{1}}\geq n^{2}_{2j-1}$ for some $j\in\N$ and $i_{1}\leq n$,
we have that
$((\varrho^{2j_{1}}_{i_{1}})^{-1}(g_{1}), \widehat{g}_{1})$ is a  $(2j-1, V_{n},(\varrho^{2i}_{k})_{k=1}^{n},i=0,1,\dots)$-paired sequence.
\item[c)]
If $(g_{1},\dots,g_{p})$ is a  $(2j-1)$-special sequence in $W_{hi}$,
such that for every $l\leq p$,
$g_{l}\in K^{2j_{l}}$ for some $4j_{l}\in\N$  then if $\widehat{g}_{1}\notin \cup_{k=1}^{\infty}K_{k}^{2j_1}$  it follows that
$(\widehat{g}_{1},\dots,\widehat{g}_{p})$ is a $(2j-1, V_{n},(\varrho^{2i}_{k})_{k=1}^{n},i=0,1,\dots)$  special sequence where
$n=\max\{1, k:\exists l\leq p\,\,\text{with}\, g_{l}\in K_{k}^{2j_{l}}
\}$.

If $g_{1}\in K_{k_{1}}^{2j_{1}}\setminus\varrho_{k_{1}}^{2j_{1}}(L_{k_{1}}^{2j_{1}})$  then
$(\widehat{g}_{1},\dots,\widehat{g}_{p})$ is a $(2j-1, V_{n},(\varrho^{2i}_{k})_{k=1}^{n},i=0,1,\dots)$-semi paired special sequence where
$n=\max\{k:\exists l\leq p\,\,\text{with}\, g_{l}\in K_{k}^{2j_{l}}
\}$.


\end{enumerate}
\end{remarks}

 It follows form the definition of $W_{ex}$ that it is an EO-complete
 subset of  $W_{mt}$. Hence from the results  of
 Section~\ref{eocomplete}
 we have the following.
 \begin{proposition}
The space $X_{ex}$ is reflexive.  Moreover  $B_{X_{ex}^{*}}=\overline{W_{ex}}^{\norm{}}$.   
\end{proposition}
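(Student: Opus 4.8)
The statement is an immediate consequence of the observation made just above it, that $W_{ex}$ is an EO-complete subset of $W_{mT}$, combined with Proposition~\ref{reflex1} and Corollary~\ref{dualball1}. So the plan is: first confirm the four defining properties of an EO-complete set for $W_{ex}=\bigcup_n W_n$, and then quote the two earlier results. Properties (1) and (3) pose no difficulty: $W_0=\{\pm e_n^*:n\in\N\}\subseteq W_{ex}$; and since each $W_n$ has the form $\conv_\Q\{Ef:f\in V_n\cup\bigcup_j W_n^{2j-1},\ E\text{ an interval}\}$, the union $W_{ex}$ is closed under restrictions to intervals, closed under rational convex combinations (any two of its elements lie in a common $W_N$, which is itself rationally convex), and symmetric.

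For property (2), closure under the operations $(\mc{A}_{n_{2j}},m_{2j}^{-1})$: given an $\mc{A}_{n_{2j}}$-admissible family $(h_i)$ in $W_{ex}$, choose $N$ with all $h_i\in W_{N-1}$; then $m_{2j}^{-1}\sum_i h_i\in U_N$, and this functional is dropped when passing to $V_N$ only if it lies in some $\widehat K_i^{2j}$ with $i>N$, in which case its coordinates have the form $m_{2j}^{-1}e_{2k}^*$ with all $e_{2k}^*\in W_0$, so it re-enters the construction at stage $i$ as an element of $V_i\subseteq W_i\subseteq W_{ex}$; either way it belongs to $W_{ex}$.

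The only point that needs a genuine, if short, argument is property (4): every $f\in W_{ex}$ admits a tree analysis all of whose nodes lie in $W_{ex}$. I would prove this by induction on the least $n$ with $f\in W_n$, the case $n=0$ being trivial. Since restrictions and rational convex combinations of functionals admitting such an analysis again admit one, it suffices to treat a single $f'\in V_n\cup\bigcup_j W_n^{2j-1}$. If $f'\in W_{n-1}$ apply the hypothesis; if $f'\in U_n$, write $f'=m_{2j}^{-1}\sum_i f_i$ with $f_i\in W_{n-1}$ and glue tree analyses of the $f_i$ under a root of weight $m_{2j}$; if $f'\in W_n^{2j-1}$, say $f'=m_{2j-1}^{-1}\big(\sum_{i\le l}(f_i+\widehat g_i)+\sum_{l<i\le m}\widehat g_i+\sum_{m<i\le p}h_i\big)$, take $f'$ as a root of weight $m_{2j-1}$ whose $l+p\le 2n_{2j-1}$ immediate successors are the successive functionals $f_1,\widehat g_1,\dots,\widehat g_m,h_{m+1},\dots,h_p$; each of these lies in $W_{ex}$ and admits a tree analysis inside $W_{ex}$ — the $\widehat g_i\in\widehat{K_k^{2j_i}}$ via the one-step analysis $m_{2j_i}^{-1}\sum_k e_{2k}^*$, and the $f_i\in D_n$ together with the $h_i$ via the inductive hypothesis after unfolding one even operation — so assembling these yields the desired analysis of $f'$. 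The point to be careful about, and the main obstacle in the verification, is that $f'$ is a legitimate element of $W_{mT}$ precisely because $X_{mT}$ was set up with the odd operations $(\mc{A}_{2n_{2j-1}},m_{2j-1}^{-1})$, which is exactly what makes the $l+p\le 2n_{2j-1}$ mixed successors (odd-supported $f_i$'s and even-supported $\widehat g_i$'s) admissible.

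Once $W_{ex}$ is known to be EO-complete, Proposition~\ref{reflex1} applied with $W=W_{ex}$ gives that $X_{ex}=X_{W_{ex}}$ is reflexive, and Corollary~\ref{dualball1}, whose proof uses only reflexivity together with the rational convexity of the norming set checked above, gives $B_{X_{ex}^*}=\overline{W_{ex}}^{\norm{}}$.
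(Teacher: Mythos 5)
Your proposal is correct and follows exactly the route the paper takes: the paper simply asserts that $W_{ex}$ is an EO-complete subset of $W_{mT}$ by construction and then invokes the results of Section~\ref{eocomplete} (Proposition~\ref{reflex1} and Corollary~\ref{dualball1}). You merely supply the verification of EO-completeness that the paper leaves implicit, including the correct observations that a functional dropped from $V_N$ because it lies in some $\widehat K_i^{2j}$ with $i>N$ re-enters at stage $i$, and that the choice of $\mathcal{A}_{2n_{2j-1}}$ in the odd operations of $X_{mT}$ is what accommodates the paired successors.
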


We show now  that  the semi paired special sequences have the tree property, a result that will be essential in the next sections.

\begin{notation}
If  $(f_{1},\widehat{g}_{1},f_{2},\widehat{g}_{2}, f_{l},\widehat{g}_{l},\widehat{g}_{l+1},\dots, \widehat{g}_{m})$
  is a
  $(2j-1, V_{q},(\varrho^{2i}_{k})_{k=1}^{q},i=0,1,\dots)$-semi paired
  special sequence, the pair defined by $g_{i}$, $i\leq m$ is defined to be the pair  $\{f_{i},g_{i}\}$ if $i\leq l$  and the singleton $\{g_{i}\}$ if $i=l+1,\dots,m$. The weight of the pair is defined to be the weight of $g_{i}$.
\end{notation}

\begin{lemma}
 The set of the semi paired special sequences has a tree structure, i.e., if
$(f_{i}^{1},\hat{g_{i}^{1}})_{i=1}^{p_{1}}{}^{\frown}(\hat{g_{i}^{1}})_{i=p_{1}+1}^{m_{1}}$
 and
 $(f_{i}^{2},\hat{g_{i}^{2}})_{i=1}^{p_{2}}{}^{\frown}(\hat{g_{i}^{2}})_{i=p_{2}+1}^{m_{2}}$
  are two semi-paired  special sequences and
the couples determined by some $g_{i}^{1},g_{j}^{2}$ have the same weight
  then  $i=j$  and  then couples determined by $g_{l}^{1}$ and $g_{l}^{2}$ are equal for all $l\leq i$.
 Moreover if $g_{i}^{1}\ne g_{i}^{2}$  then all the couples determined by
 $g_{k}^{1},g_{p}^{2}$, $k,p>i$ have different weights.
\end{lemma}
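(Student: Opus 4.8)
The plan is to reduce everything to the tree property of the underlying special sequences of $W_{hi}$ and then to recover the $f$-components from the coding functions $\varrho^{2i}_k$. Write the two sequences as $(f^1_i,\widehat g^1_i)_{i=1}^{p_1}{}^{\frown}(\widehat g^1_i)_{i=p_1+1}^{m_1}$ and $(f^2_i,\widehat g^2_i)_{i=1}^{p_2}{}^{\frown}(\widehat g^2_i)_{i=p_2+1}^{m_2}$, the $s$-th being a $(2j_s-1,V_{q_s},(\varrho^{2i}_k)_{k=1}^{q_s},i\geq0)$-semi paired special sequence; recall from \eqref{eq3} that the stage and the index bound coincide there, and that the couple determined by $g^s_l$ has weight $w(g^s_l)$, so the hypothesis reads $w(g^1_i)=w(g^2_j)$. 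By clause (2) of Definition~\ref{semispecials} each $(g^s_1,\dots,g^s_{m_s})$ is a $(2j_s-1)$-special sequence of $W_{hi}$, so applying Remark~\ref{treespecial} to these two sequences gives at once that $i=j$, that $g^1_l=g^2_l$ for every $l<i$, and — translating weights of $g$'s into weights of couples — the ``moreover'' assertion, namely that if $g^1_i\ne g^2_i$ then $w(g^1_k)\ne w(g^2_p)$ for all $k,p>i$.

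It then remains to lift the equalities $g^1_l=g^2_l=:h$ ($l<i$) to equality of couples, which I would prove by induction on $l$. If $l>p_1$ and $l>p_2$, both couples are the singleton $\{h\}$. If $l\leq p_1$ and $l\leq p_2$, then $h=\varrho^{2j^1_l}_{i^1_l}(f^1_l)=\varrho^{2j^2_l}_{i^2_l}(f^2_l)$: comparing weights gives $j^1_l=j^2_l$, the disjointness of the family $(K^{2j^1_l}_k)_k$ together with Remark~\ref{r5.1} gives $i^1_l=i^2_l$, and then injectivity of $\varrho^{2j^1_l}_{i^1_l}$ gives $f^1_l=f^2_l$, so the pairs coincide.

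The remaining possibility — say $l\leq p_1$ but $l>p_2$, the other case being symmetric — is the crux, and I would exclude it using the maximality clause (3) of Definition~\ref{semispecials}. By the inductive hypothesis each $m<l$ has equal couples in the two sequences, and since $m<l\leq p_1$ these are pairs; hence $m\leq p_2$ for all $m<l$, so $p_2=l-1$ and $l$ is the first index of the semi part of the second sequence. Let $i$ be given by $w(h)=m_{2i}$ and let $k$ be the unique index with $h\in K^{2i}_k$ (unique by disjointness of $(K^{2i}_k)_k$). Since $g^2_{p_2+1}=h$ lies in the semi part of the second sequence, its clause (2) forces $k\leq q_2$; since $g^1_l=h$ is a paired element of the first sequence, the same disjointness shows $\phi:=(\varrho^{2i}_k)^{-1}(h)=f^1_l$. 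Then $\phi\in L^{2i}_k\subseteq V_k\subseteq V_{q_2}$, one has $\phi<\widehat h$ by \eqref{p1}, and $\widehat g^2_{p_2}=\widehat g^1_{l-1}<f^1_l=\phi$ because the first sequence is a block sequence and, by the inductive hypothesis, its first $l-1$ couples agree with those of the second. Hence $(f^2_1,\widehat g^2_1,\dots,f^2_{p_2},\widehat g^2_{p_2},\phi,\widehat g^2_{p_2+1})$ is a block sequence whose $\widehat g$-part is an initial segment of a $(2j_2-1)$-special sequence, so it is a $(2j_2-1,V_{q_2},(\varrho^{2i}_k)_{k=1}^{q_2},i\geq0)$-paired special sequence, contradicting clause (3) for the second sequence and completing the induction.

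The hard part will be exactly this last subcase: one must verify that the witness $\phi$ manufactured from the first sequence genuinely lies in $V_{q_2}$ and has coding index $k\leq q_2$, i.e.\ is admissible for the second sequence. This is where all the structural choices enter — the monotonicity $V_n\subseteq W_n\subseteq V_{n+1}$, the coincidence of stage and bound in \eqref{eq3}, the placement condition \eqref{p1}, and the disjointness of the sets $K^{2i}_k$ recorded in Remark~\ref{r5.1}; the remainder is bookkeeping, once one observes that the weight of a couple is the weight of its $g$-component.
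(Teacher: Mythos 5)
Your proof is correct and follows essentially the same route as the paper's: reduce to the tree property of the $(2j-1)$-special sequences of $W_{hi}$ (Remark~\ref{treespecial}) to get $i=j$ and $g^1_l=g^2_l$ for $l<i$, then recover the $f$-components from the injectivity of the coding functions $\varrho^{2i}_k$ together with the disjointness of the sets $K^{2i}_k$ (Remark~\ref{r5.1}). The only place you go beyond the paper is your explicit exclusion of the mixed case ($l\leq p_1$ but $l>p_2$) via the maximality clause (3) of Definition~\ref{semispecials}; the paper disposes of this only through its terse closing remark that $i=j=l_1$ forces $l_1-1\leq l_2$, so your fuller argument is a clarification of the same proof rather than a different one.
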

\begin{proof}
  By the definition of the semi-paired  special sequences it follows that
  $(g_{l}^{1})_{l=1}^{i}$ and $(g_{k}^{2})_{k=1}^{j}$ are special sequences of $W_{hi}^{1}$. Since the special sequences of $W_{hi}$ have the tree property and $w(g_{i}^{1})=w(g_{j}^{2})$ 
  it follows that  $i=j$   and   $g_{k}^{1}=g_{k}^{2}$ for all $k< i=j$.
  Also from the definition of the (semi)-paired special sequences and the  injectivity of the funtions  $\varrho_{n}^{2j}$,$n\in\N,j\geq0$ it follows that
 the couples determined by $g_{k}^{1},g_{k}^{2}$, $k<i$, are equal and in particular they have the same cardinality.

  The same holds of $g_{i}^{1}=g_{i}^{2}$.    In the case $g_{i}^{1}\ne g_{i}^{2}$ again from the tree property of the special sequences of $W_{hi}$ it follows that
  $w(g_{k}^{1})\ne w(g_{l}^{2})$ for all $k,l>i$.

   In the case    $i=j=l_{1}$  (or $i=j=l_{2}$)   we have that    $l_{1}-1\leq l_{2}$ ( $l_{2}-1\leq l_{1}$).

\end{proof}

We proceed  to show that $\widehat{W}_{hi}=\{f_{|\N_{ev}}: f\in W_{ex}\}$.
First we show that $W_{ex|\N_{ev}}$ is a subset of $\widehat{W}_{hi}$. This
will be an immediate consequence of the following lemma.
\begin{lemma}\label{step1}
 For every  $n\in\N$ and every $f\in W_{n}$  we have that  $f_{|\N_{ev}}\in \widehat{W}_{hi}$. 
\end{lemma}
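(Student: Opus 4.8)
The statement to prove is that for every $n\in\N$ and every $f\in W_n$ we have $f_{|\N_{ev}}\in\widehat W_{hi}$. The natural approach is induction on $n$, following the inductive construction of $W_{ex}=\cup_n W_n$ and exploiting that $\widehat W_{hi}$ is, by definition, closed under exactly the operations that appear in the construction of the $W_n$'s, once everything is restricted to $\N_{ev}$. First I would handle the base case $n=0$: for $f=\pm e_m^*$, if $m$ is even then $f_{|\N_{ev}}=\pm e_m^*=\widehat{(\pm e^*_{m/2})}\in\widehat W_{hi}$, and if $m$ is odd then $f_{|\N_{ev}}=0=\widehat 0\in\widehat W_{hi}$ (the zero functional is in $W_{hi}$, being a trivial rational convex combination, or an empty restriction). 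So the base case reduces to the observation that $0\in\widehat W_{hi}$ and that $\widehat{e^*_k}=e^*_{2k}$.

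For the inductive step, assume the claim holds for $W_{n-1}$, hence for every $f\in V_n$ (since $V_n\subset U_n\cup W_{n-1}$ and $U_n$ is built from $\mc A_{n_{2j}}$-operations on $W_{n-1}$, whose restrictions to $\N_{ev}$ land in $\widehat W_{hi}$ because $\widehat W_{hi}$ is closed under even operations: if $f=m_{2j}^{-1}\sum_{i=1}^k f_i$ with $f_i\in W_{n-1}$, then $f_{|\N_{ev}}=m_{2j}^{-1}\sum_{i=1}^k (f_i)_{|\N_{ev}}$, a sum of $\widehat W_{hi}$-functionals with successive supports inside $\N_{ev}$, hence of the form $\widehat{m_{2j}^{-1}\sum h_i}$ with $\sum h_i$ an $\mc A_{n_{2j}}$-admissible sum in $W_{hi}$). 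Since $W_n=\conv_\Q\{Ef: f\in V_n\cup\bigcup_j W_n^{2j-1},\ E\text{ an interval}\}$, and $\widehat W_{hi}$ is closed under rational convex combinations and interval restrictions (restricting to an interval $E$ and then to $\N_{ev}$ commutes, and $E\cap\N_{ev}$ corresponds under the $\widehat{\cdot}$ map to an interval-type restriction in $W_{hi}$), it suffices to check that for each special functional $f\in W_n^{2j-1}$ we have $f_{|\N_{ev}}\in\widehat W_{hi}$.

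The main obstacle — and the heart of the lemma — is the special functional case. Here $f=m_{2j-1}^{-1}\big(\sum_{i=1}^l(f_i+\widehat g_i)+\sum_{i=l+1}^m\widehat g_i+\sum_{i=m+1}^p h_i\big)$ coming from a $(2j-1,V_n,(\varrho_k^{2i}),i\ge0)$-special sequence. When we restrict to $\N_{ev}$: each $f_i$ has support in $\N_{od}$, so $(f_i)_{|\N_{ev}}=0$; each $\widehat g_i$ already has support in $\N_{ev}$ and equals $\widehat{g_i}$ with $g_i\in W_{hi}$ (from Definitions~\ref{pairedspecials}, \ref{semispecials}: the $g_i$ come from a $(2j-1)$-special sequence of $W_{hi}$, via the $\varrho_k^{2i}$-images which lie in $K^{2j_i}\subset W_{hi}$); and each $h_i$ satisfies $h_{i|\N_{ev}}=\widehat{\phi_i}$ with $\phi_i\in W_{hi}$ by clause (2) of Definition~\ref{wspecials}, and moreover $(g_1,\dots,g_m,\phi_{m+1},\dots,\phi_p)$ is a genuine $(2j-1)$-special sequence of $W_{hi}$. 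Therefore $f_{|\N_{ev}}=m_{2j-1}^{-1}\big(\sum_{i=1}^m\widehat{g_i}+\sum_{i=m+1}^p\widehat{\phi_i}\big)=\widehat{\,m_{2j-1}^{-1}(\sum_{i=1}^m g_i+\sum_{i=m+1}^p\phi_i)\,}$, and the functional inside the hat is exactly a $(2j-1)$-special sequence operation in $W_{hi}$, hence lies in $W_{hi}$ by property (3) of the definition of $W_{hi}$. The delicate points to verify carefully are: (i) that the indexing matches so that $p\le n_{2j-1}$ transfers correctly; (ii) that the $g_i$ for $i\le l$, which arise as $\varrho_{i_k}^{2j_k}(f_k)$, are indeed elements of $W_{hi}$ — this holds because $\varrho_k^{2i}$ maps into $K_k^{2i}\subset K^{2i}\subset W_{hi}$; and (iii) that the $(2j-1)$-special sequence condition (the coding-function/tree-like constraint on weights $w(g_{i+1})=m_{\sigma(g_1,\dots,g_i)}$) is precisely what Definition~\ref{wspecials}(2) and Definition~\ref{semispecials}(2) guarantee. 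Once these bookkeeping checks are in place, the identity $f_{|\N_{ev}}=\widehat{(\text{special functional of }W_{hi})}$ closes the induction.
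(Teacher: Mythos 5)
Your proposal is correct and follows essentially the same route as the paper's proof: induction on $n$, with the even operations handled via closedness of $W_{hi}$ under $(\mathcal{A}_{n_{2j}},m_{2j}^{-1})$-operations, the special functionals handled by observing that the $f_i$ vanish on $\N_{ev}$ while $(g_1,\dots,g_m,\phi_{m+1},\dots,\phi_p)$ is a $(2j-1)$-special sequence of $W_{hi}$, and rational convex combinations (and interval restrictions) absorbed by the corresponding closure properties of $W_{hi}$. The extra bookkeeping you flag (the zero functional, the compatibility of interval restriction with the hat map) is sound and only makes explicit what the paper leaves implicit.
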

\begin{proof}
 We prove the lemma by induction. For  every $n\in\N$, $e^{*}_{n|\N_{ev}}=0$  if
 $n$ is odd otherwise  if $n=2k$, $e^{*}_{n|\N_{ev}}=e^{*}_{n}=\widehat{e^{*}_{k}}\in\widehat{W}_{hi}$.
 Assume that  the result holds for all  $k\leq n$  and $f\in W_{n+1}$.

  Let   $f$ be  the result of an  $(\mc{A}_{n_{2j}}, m_{2j}^{-1})$-
  operation of elements  of $W_{n-1}$, i.e., $f=m_{2j}^{-1}\sum_{i=1}^{d}f_{i}$. Then  from the inductive
  hypothesis we have that $f_{i|\N_{ev}}=\widehat{\phi_{i}}\in \widehat{W}_{hi}$ for every $i$.  Since
  $W_{hi}$ is closed  in the
  $(\mc{A}_{n_{2j}},m_{2j}^{-1})$-operations it  follows that
  $\phi=m_{2j}^{-1}\sum_{i=1}^{d}\phi_{i}\in W_{hi}$
and    $f_{|\N_{ev}}=\widehat{\phi}\in \widehat{W}_{hi}$.

    Assume  now that  there exists a $(2j-1, V_{n},
    (\varrho_{k}^{2i})_{k=1}^{n},i=0,1,\dots)$-special sequence
    $(f_{1},\widehat{g}_{1},f_{l},\widehat{g}_{l},\widehat{g}_{l+1},\dots,
    \widehat{g}_{m},h_{m+1},\dots,h_{p})$  such that
    $f=m_{2j-1}^{-1}(\sum_{i=1}^{l}(f_{i}+\widehat{g}_{i})+\sum_{i=l+1}^{m}\widehat{g}_{i}+\sum_{i=m+1}^{p}h_{i})$.
By the definition of the special sequences we have that
$f_{i|\N_{ev}}=0$ for every $i\leq l$ and $h_{i|\N_{ev}}=\widehat{\phi}_{i}\in
\widehat{W}_{hi}$ for every $i\geq m+1$.
Moreover  $(g_{1},\dots,g_{m},\phi_{m+1},\dotsm\phi_{p})$ is a
$(2j-1)$-special sequence of  $W_{hi}$.  It follows that
\[
  \phi=m_{2j-1}^{-1}(\sum_{i=1}^{m}g_{i}+\sum_{i=m+1}^{p}\phi_{i})\in
  W_{hi}\,\,\,
  and\,\, f_{|\N_{ev}}=m_{2j-1}^{-1}(\sum_{i=1}^{m}\widehat{g}_{i}+\sum_{i=m+1}^{p}h_{i|\N_{ev}})=\widehat{\phi}.\]
The case   $f$ is  rational convex combination  of functionals of
the first two cases  is immediate consequence  of the fact that
$W_{hi}$ is closed in rational convex combinations. This completes  the proof.
\end{proof}
The next lemma  will be used to prove the converse inclusive relation.
\begin{lemma}\label{wh1specials}
  Let  $g_{i}\in K_{n_{i}}^{2j_{i}}$, $i\leq m$  be such that
  $(g_{1},\dots,g_{m})$ is  a $(2j-1)$-special sequence of
  $W_{hi}$.  Then there exist $0\leq l\leq m$ and $f_{1},\dots,f_{l}\in
  W_{ex}$ such that $(f_{1},\widehat{g}_{1},\dots,f_{l},\widehat{g}_{l},\widehat{g}_{l+1},\dots,\widehat{g}_{m})$
  is a $(2j-1, V_{n}, (\varrho_{k}^{n})_{k=1}^{n},i=0,\dots)$ semi-paired
  special sequence where  $n=\max\{n_{i}:i\leq m\}$.
\end{lemma}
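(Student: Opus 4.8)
The plan is to lift the longest possible initial segment of $(g_1,\dots,g_m)$ to an honest paired special sequence and then check that what is left over fulfils the semi‑paired requirements for free. The first thing I would record is that a preimage, when it exists, is unique: if $g_i\in\ran(\varrho_{n_i}^{2j_i})$ then, since $g_i\in K_{n_i}^{2j_i}$ and the sets $(K_k^{2t})_{k,t}$ are pairwise disjoint (Remark~\ref{r5.1}), $n_i$ is the only admissible lower index, so one may set $f_i:=(\varrho_{n_i}^{2j_i})^{-1}(g_i)\in L_{n_i}^{2j_i}$. Such an $f_i$ has $\supp f_i\subset\N_{od}$, weight $m_{2j_i}$, satisfies $f_i<\widehat g_i$ by \eqref{p1}, and lies in $V_n$ where $n:=\max_{i\le m}n_i$: indeed $f_i\in D_{n_i}\subset W_{n_i}$, and $W_{n_i}\subset W_{n-1}\subset V_n$ if $n_i<n$, while $D_{n_i}=D_n\subset V_n$ if $n_i=n$; in particular $f_i\in W_{ex}$. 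I would then let $l$ be the largest element of $\{0,1,\dots,m\}$ with the property that $g_i\in\ran(\varrho_{n_i}^{2j_i})$ for every $i\le l$ and $\maxsupp\widehat g_{i-1}<\minsupp f_i$ for every $2\le i\le l$ (with $l=0$ allowed, the paired part then being empty); this exists because the integers with this property form a nonempty initial segment of $\{0,\dots,m\}$, the value $0$ qualifying vacuously and the conditions descending to shorter prefixes.

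With this $l$ fixed, the verification that $(f_1,\widehat g_1,\dots,f_l,\widehat g_l)$ is a $(2j-1,V_n,(\varrho_k^{2i})_{k=1}^n,i\ge0)$‑paired special sequence and that $(f_1,\widehat g_1,\dots,f_l,\widehat g_l,\widehat g_{l+1},\dots,\widehat g_m)$ satisfies conditions (1) and (2) of Definition~\ref{semispecials} should be routine. Both finite sequences are block sequences, by combining $f_i<\widehat g_i$ with the separation condition built into the definition of $l$ and with the fact that $g_l<\dots<g_m$ forces $\widehat g_l<\dots<\widehat g_m$. Condition (1) of Definition~\ref{pairedspecials} holds with index data $i_i=n_i\le n$ and $j_i$ as given, by the first paragraph, together with $l\le m\le n_{2j-1}$. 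Condition (2) holds because an initial segment of a $(2j-1)$‑special sequence of $W_{hi}$ is again one, so $(g_1,\dots,g_l)$ qualifies. For Definition~\ref{semispecials}, condition (1) is the previous point plus $l\ge0$, and condition (2) holds because $(g_1,\dots,g_m)$ is a $(2j-1)$‑special sequence of $W_{hi}^1$ (each $g_i\in K^{2j_i}$ is a single $(\mc A_{n_{2j_i}},m_{2j_i}^{-1})$‑operation on basis functionals) and because $n_{m'}\le n$ for $l<m'\le m$ by the choice of $n$.

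The delicate point---the one I would single out---is condition (3) of Definition~\ref{semispecials}: that no $\phi\in V_n$ makes $(f_1,\widehat g_1,\dots,f_l,\widehat g_l,\phi,\widehat g_{l+1})$ a paired special sequence. This is exactly where maximality of $l$ enters. Assuming $l<m$ and that such a $\phi$ existed, $\phi$ would be paired with $\widehat g_{l+1}$, hence $g_{l+1}=\varrho_i^{2t}(\phi)$ for some $i\le n$ with $w(\phi)=m_{2t}$; since $\ran(\varrho_i^{2t})\subset K_i^{2t}$ and $g_{l+1}\in K_{n_{l+1}}^{2j_{l+1}}$, disjointness of the sets $K_k^{2t}$ forces $t=j_{l+1}$ and $i=n_{l+1}$, so $g_{l+1}\in\ran(\varrho_{n_{l+1}}^{2j_{l+1}})$ and $\phi=f_{l+1}$; moreover the block requirement of the extended sequence gives $\maxsupp\widehat g_l<\minsupp f_{l+1}$. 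But then $l+1$ would meet the defining conditions of the first paragraph, contradicting the maximality of $l$. (This is precisely the dichotomy recorded in Remark~\ref{remarksonspecial}(a), which could be cited in place of the argument.) With (3) established, $l$ together with $f_1,\dots,f_l$ witnesses the conclusion. I do not anticipate any real difficulty beyond cleanly packaging the disjointness and injectivity of the coding maps $\varrho_k^{2i}$ into the uniqueness of the $f_i$ and into the ``cannot‑extend'' alternative used for condition (3).
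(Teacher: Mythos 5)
Your proposal is correct and follows essentially the same route as the paper: choose $l$ maximal so that the preimages $(\varrho_{n_i}^{2j_i})^{-1}(g_i)$ together with $\widehat g_i$ form a paired special sequence, then use maximality (equivalently, the dichotomy of Remark~\ref{remarksonspecial}(a)) to verify condition (3) of Definition~\ref{semispecials} for the remaining tail. The only cosmetic difference is that you treat the cases $l=0$, $l=m$, $0<l<m$ uniformly where the paper separates them, and you make the uniqueness of the preimage and the membership $f_i\in V_n$ more explicit.
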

\begin{proof}
Let $l\le m$  be maximal with the properties
\begin{enumerate}
\item $g_{i}\in \varrho_{n_{i}}^{2j_{i}}(L_{n_{i}}^{2j_{i}})$ for every $i\leq l$.
\item
$F=((\varrho^{2j_{i}}_{n_{i}})^{-1}(g_{i}), \widehat{g}_{i})_{i=1}^{l}$  is a  $(2j-1, V_{n},
(\varrho_{k}^{2i})_{k=1}^{n},i=0,1\dots)$ -paired special sequence, where
$n=\max\{n_{i}:i\leq m\}$.
\end{enumerate}

If  $l=0$,   Remark~\ref{remarksonspecial}b) yields  that $g_{1}\notin \varrho_{n_{1}}^{2j_{1}}( L_{n_{1}}^{2j_{1}})$
and   from Remark~\ref{remarksonspecial}c) we get that $(\widehat{g}_{1},\widehat{g}_{2},\dots,\widehat{g}_{m})$ is  
$(2j-1, V_{n},(\varrho_{k}^{2i})_{k=1}^{n},i=0,1,\dots,)$-semi-paired special sequence.

 If $l=m$  we have that 
$ ((\varrho^{2j_{i}}_{n_{i}})^{-1}(g_{i}), \widehat{g}_{i})_{i=1}^{m}$  is   $(2j-1, V_{n},
(\varrho_{k}^{2i})_{k=1}^{n},i=0,1\dots)$ paired   special sequence.

Let $l<m$. Then  for   the functional $g_{l+1}$  we have that either
does not  belong to $\varrho_{n_{l+1}}^{2j_{l+1}}(L_{n_{l+1}}^{2j_{l+1}})$  or belongs  to
$\varrho_{n_{l+1}}^{2j_{l+1}}(L_{n_{l+1}}^{2j_{l+1}})$  
and $\min (\varrho_{n_{l+1}}^{2i_{l+1}})^{-1}(g_{l+1})\leq\max \widehat{g}_{l}$. In both
cases  we get 
that
\[((\varrho^{2j_{1}}_{1})^{-1}(g_{1}), \widehat{g}_{1},\dots,
(\varrho^{2j_{1}}_{l})^{-1}(g_{l}),\widehat{g}_{l},
\widehat{g}_{l+1},\dots,\widehat{g}_{m})\] is
 $(2j-1, V_{n},(\varrho_{k}^{2i})_{k=1}^{n},i=0,1\dots)$ semi-paired special
 sequence.
\end{proof}

\begin{lemma}\label{step2}
For every $g\in W_{hi}$   there exists  $\phi\in W_{ex}$ such that  $\phi_{|\N_{ev}}=\widehat{g}$.
\end{lemma}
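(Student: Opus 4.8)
The plan is to prove this by induction on the complexity of $g\in W_{hi}$, following the inductive construction $W_{hi}=\cup_n W_{hi}^n$, and building $\phi\in W_{ex}$ in parallel. The base case is immediate: if $g=\pm e_n^*$ then $\phi=\pm e_{2n}^*\in W_0\subseteq W_{ex}$ satisfies $\phi|_{\N_{ev}}=\widehat g$. For the inductive step there are three cases according to how $g$ was built.

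\emph{Even operations.} If $g=m_{2j}^{-1}\sum_{i=1}^d g_i$ with $(g_i)$ an $\mathcal A_{n_{2j}}$-admissible sequence in $W_{hi}$, the inductive hypothesis gives $\phi_i\in W_{ex}$ with $\phi_i|_{\N_{ev}}=\widehat{g_i}$. One should also arrange (by passing to $E\phi_i$ for a suitable interval $E$, using that $W_{ex}$ is closed under interval restrictions) that $\supp\phi_i\subseteq\ran(\widehat{g_i})$, so that $(\phi_i)_{i=1}^d$ is again $\mathcal A_{n_{2j}}$-admissible. Then $\phi=m_{2j}^{-1}\sum_{i=1}^d\phi_i\in W_{ex}$ since $W_{ex}$ is EO-complete, and $\phi|_{\N_{ev}}=m_{2j}^{-1}\sum_i\widehat{g_i}=\widehat g$. \emph{Rational convex combinations} and interval restrictions are handled the same way: $W_{ex}$ is closed under both, and the operation $g\mapsto\widehat g$ commutes with rational convex combinations and with restriction to even-indexed intervals.

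\emph{Odd operations on special sequences.} This is the main case, and the main obstacle. Here $g=m_{2j-1}^{-1}\sum_{i=1}^m g_i$ where $(g_1,\dots,g_m)$ is a $(2j-1)$-special sequence of $W_{hi}$. The difficulty is that a genuine special functional $\phi$ of $W_{ex}$ is \emph{not} simply $\widehat g$: its restriction to $\N_{ev}$ must be $\widehat g$, but on $\N_{od}$ it must carry the ``paired'' part $f_1,\dots,f_l$. The strategy is to first reduce to the case where each $g_i$ lies in some $K_{n_i}^{2j_i}$ — i.e. $g_i=m_{2j_i}^{-1}\sum_{t\in F_i}e_t^*$ with $\#F_i=n_{2j_i}$ — which we may do because the weights $w(g_i)$ for $i\geq 2$ are in the range of the coding $\sigma$, hence even-indexed, and (after possibly an initial maneuver to place $g_1$ and adjust supports) each $g_i$ can be taken of this basic ``averaging of basis vectors'' form; this is exactly the setting of Lemma~\ref{wh1specials} and Remark~\ref{remarksonspecial}c). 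Granting that, Lemma~\ref{wh1specials} produces $0\leq l\leq m$ and $f_1,\dots,f_l\in W_{ex}$ so that $(f_1,\widehat g_1,\dots,f_l,\widehat g_l,\widehat g_{l+1},\dots,\widehat g_m)$ is a semi-paired special sequence (for $n=\max_i n_i$ and $V_n$ at the appropriate stage). One then extends this to a full $(2j-1,V_n,(\varrho_k^{2i})_{k=1}^n,i\geq 0)$-special sequence by appending the trivial tail of length $p=m$ (no $h_i$'s, so $m=p$ and the third block of Definition~\ref{wspecials} is empty — one must check this degenerate case is allowed, which it is since the definition permits $m\leq p$ with $p-m=0$). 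The associated special functional is
\[
\phi=m_{2j-1}^{-1}\Big(\sum_{i=1}^l(f_i+\widehat g_i)+\sum_{i=l+1}^m\widehat g_i\Big)\in W_{ex},
\]
and since each $\supp f_i\subseteq\N_{od}$ we get $\phi|_{\N_{ev}}=m_{2j-1}^{-1}\sum_{i=1}^m\widehat g_i=\widehat g$, as required.

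\emph{Reducing to the basic form of the $g_i$.} The remaining point, which I expect to need the most care, is justifying the reduction of an arbitrary $(2j-1)$-special sequence of $W_{hi}$ to one whose entries are elements of the sets $K_{n_i}^{2j_i}$. This uses that the membership requirement for a $(2j-1)$-special sequence only constrains the weights $w(g_i)$ (via $w(g_1)=m_{2j_1}$ with $j_1\in\Omega_1$, $m_{2j_1}\geq n_{2j-1}^2$, and $w(g_{i+1})=m_{\sigma(g_1,\dots,g_i)}$), together with successiveness; so one may replace each $g_i$ by any weighted functional of $W_{hi}$ of the same weight and compatible support. Concretely, since for each required weight $m_{2j_i}$ and each $k$ the family $K_k^{2j_i}$ contains functionals with arbitrarily large $\min$-support, and the $\varrho_k^{2i}$ are injections into the $K_k^{2i}$, one can choose the $g_i\in K_{n_i}^{2j_i}$ successively, in the range of $\varrho_{n_i}^{2j_i}$ as long as needed, with the supports nested correctly, and with $\widehat g_1\notin\cup_k K_k^{2j_1}$ avoided or not as Remark~\ref{remarksonspecial}c) dictates. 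Once the $g_i$ are of this form and produce the special sequence above, the functional $m_{2j-1}^{-1}\sum g_i$ they define need not be the original $g$; but it suffices for Lemma~\ref{step2} to produce \emph{some} $\phi$ with $\phi|_{\N_{ev}}=\widehat g$ only when $\widehat g$ is already of the form $\widehat{(\text{some special functional built from }K\text{-entries})}$ — so strictly one proves the statement first for $g$ built from $K$-entries at every level, then lifts to general $g\in W_{hi}$ by the even-operation / convex-combination / restriction cases, noting every $g\in W_{hi}$ is obtained from such basic special functionals and $\pm e_n^*$ by those three closed operations. Assembling the induction this way completes the proof.
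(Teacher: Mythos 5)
Your base case, even-operation case, and the use of Lemma~\ref{wh1specials} to build the paired/semi-paired prefix all match the paper's argument. The genuine gap is in how you handle the entries $g_i$ of the special sequence that do \emph{not} lie in any $K_{n_i}^{2j_i}$. You propose to reduce to the case where each $g_i$ lies in some $K_{n_i}^{2j_i}$ and then to recover the general case by claiming that every $g\in W_{hi}$ is obtained from such basic special functionals and $\pm e_n^*$ by even operations, rational convex combinations and interval restrictions. That claim is false: the odd operation in $W_{hi}$ applies to special sequences whose entries are \emph{arbitrary} functionals of $W_{hi}$ with the prescribed weights (for instance $g_1$ could be an even operation applied to $\pm e_t^*$'s with some minus signs, hence of weight $m_{2j_1}$ but not in $K^{2j_1}$, which contains only the all-plus averages over sets of cardinality exactly $n_{2j_1}$), and the resulting special functionals are not generated from the ``$K$-entry'' ones by the remaining three operations. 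Replacing each $g_i$ by a member of $K_{n_i}^{2j_i}$ of the same weight changes the functional $g$ — your own remark concedes this — and the lifting step that is supposed to repair it does not exist.

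The paper's mechanism for exactly this point is the third block $h_{m+1},\dots,h_p$ in Definition~\ref{wspecials}, which your construction explicitly discards (``the third block \dots is empty''). After the maximal initial segment $g_1,\dots,g_m$ of $K$-entries (treated via Lemma~\ref{wh1specials}), each remaining $g_i$, $i>m$, is handled by the inductive hypothesis: there is $h_i\in W_{ex}$ with $h_{i|\N_{ev}}=\widehat g_i$ (and $\ran(h_i)=\ran(\widehat g_i)$ after an interval restriction), and condition (2) of Definition~\ref{wspecials} is designed precisely so that $(f_1,\widehat g_1,\dots,f_l,\widehat g_l,\widehat g_{l+1},\dots,\widehat g_m,h_{m+1},\dots,h_p)$ is then a legitimate special sequence of $W_{ex}$. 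With this, $\phi=m_{2j-1}^{-1}\bigl(\sum_{i\le l}(f_i+\widehat g_i)+\sum_{i=l+1}^m\widehat g_i+\sum_{i=m+1}^p h_i\bigr)$ satisfies $\phi_{|\N_{ev}}=\widehat g$. Your induction does not close without this step.
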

\begin{proof}
  We shall prove by induction on $n$  that for every $g\in W_{hi}^{n}$
  there exists  $\phi\in W_{ex}^{n_{0}}$,$n_{0}=n_{0}(g)\geq n$, such that  $\phi_{|\N_{ev}}=\widehat{g}$.
  
  The  case  $g=\pm e^{*}_{k}\in W_{hi}^{0}$ it trivial.   Let $g\in W_{hi}^{1}$.  We shall consider the following  two cases.

  \textit{Case  1}. $g=m^{-1}_{2j}\sum_{i\in F}\pm e_{i}^{*}$ with $\# F\leq n_{2j}$.

We have to consider the following two subcases.

\medskip

\textit{Subcase a)}. $g\notin \cup_{n=2}^{\infty}K_{n}^{2j}$.

In this subcase we get that $\phi=m_{2j}^{-1}\sum_{i\in F}\pm e^{*}_{2i}\in  V_{1}$
and $\phi=\phi_{|\N_{ev}}=\widehat{g}$.

\medskip
\textit{Subcase b)}. $g\in K_{n}^{2j}$ for some  $n\geq 2$.

In this subcase we get that $\phi=m_{2j}^{-1}\sum_{i\in F}e^{*}_{2i}\in  V_{n}\subset W_{n}$
and $\phi=\widehat{g}$.

We pass now to the next case.

\smallskip

\textit{Case 2}.
Let $g=m_{2j-1}^{-1}\sum_{i=1}^{p}g_{i}$  where 
 $g_{i}\in W_{hi}^{1}$ and $(g_{1},\dots,g_{p})$ is a
$(2j-1)$-special sequence.

Let $m\leq p$ be maximal with the property  $g_{k}\in K_{i_{k}}^{2j_{k}}$  for
every  $k\leq m$.
If $m=0$  from ~\ref{remarksonspecial}c) we get that
$(\widehat{g}_{1},\dots,\widehat{g_{m}})$
is  $(2j-1, V_{n_{0}}, (\varrho_{k}^{2i})_{k=1}^{n_{0}},i=0,\dots)$ 
special sequence  for some $n$.

If  $m\geq 1,$ Lemma~\ref{wh1specials}  yields that
there exist $l\leq m$,  $f_{1},\dots, f_{l}\in W_{ex}$  and $n_{0}\in\N$  such
that
$(f_{1},\widehat{g}_{1},\dots,f_{l},\widehat{g}_{l},\widehat{g}_{l+1},\dots,\widehat{g_{m}})$
is a $(2j-1, V_{n_{0}}, (\varrho_{k}^{2i})_{k=1}^{n_{0}},i=0,\dots)$ semi-paired
special sequence.
Clearly by the definition of the
$(2j-1, V_{n}, (\varrho_{k}^{2i})_{k=1}^{n},i=0,\dots)$ special
sequences,
we get that
$(f_{1},\widehat{g}_{1},\dots,f_{l},\widehat{g}_{l},\widehat{g}_{l+1},\dots,\widehat{g_{p}})$
is  $(2j-1, V_{n}, (\varrho_{k}^{2i})_{k=1}^{n},i=0,\dots)$ special
sequence
where  $n=\max\{r_{i}: g_{i}\in K_{r_{i}}^{2j_{i}}, i\in \{1,\dots,p\}\}$.
  It follows
  \[\phi=m_{2j-1}^{-1}\left(\sum_{i=1}^{l}(f_{i}+\widehat{g}_{i})+\sum_{i=l+1}^{p}\widehat{g}_{i}
      \right)\in W_{ex}
\,\,\,\,\text{and}\,\,\,\,
\phi_{|\N_{ev}}=m_{2j-1}^{-1}\sum_{i=1}^{d}\widehat{g}_{i}=\widehat{g}.
\]
The case $g$ is rational convex combination of  functional of the
above two cases  follows readily. This
complete the first inductive step.

Assume that we have prove the result for every $k\leq n$ and let $g\in
W_{hi}^{n+1}$.

The case $g$  is that result of an
$(\mc{A}_{n_{2j}},m_{2j}^{-1})$-operation  of  elements of $W_{n}^{hi}$
follows  easily from the inductive hypothesis.  Indeed if
$g=m_{2j}^{-1}\sum_{i=1}^{d}g_{i}$  then for every $i$ we have that there
exists  $\phi_{i}\in W_{ex}$ such that  $\phi_{i|\N_{ev}}=\widehat{g}_{i}$.
Since $W_{ex}$ is closed in projections on intervals
we may assume that  $\ran(\phi_{i})=\ran(\widehat{g_{i}})$.
Also using that 
$W_{ex}$ is closed in  $(\mc{A}_{n_{2j}},m_{2j}^{-1})$-operations we get
\[
\phi=m_{2j}^{-1}\sum_{i=1}^{d}\phi_{i}\in W_{ex}\,\,\andd  \phi_{|\N_{ev}}=\widehat{g}.
\]
Assume now that  $g=m_{2j-1}^{-1}\sum_{i=1}^{p}g_{i}$  where
$(g_{1},\dots,g_{p})$ is a $(2j-1)$-special sequence of $W_{hi}^{n+1}$

Let $m\leq p$ be maximal with respect the property
$g_{k}\in K_{i_{k}}^{2j_{k}}$ for some $i_{k}$,  for every $k\leq m$.
Then  from  Lemma~\ref{wh1specials}  we get that there exist $l\leq m$,   $f_{1},\dots, f_{l}\in W_{ex}$  and $n_{0}\in\N$  such
that
$(f_{1},\widehat{g}_{1},\dots,f_{l},\widehat{g}_{l},\widehat{g}_{l+1},\dots,\widehat{g_{m}})$
is a $(2j-1, V_{n_{0}}, (\varrho_{k}^{2i})_{k=1}^{n_{0}},i=0,\dots)$  semi-paired
special sequence.
For every $i>m$  from the inductive hypothesis
we have that there exists  $h_{i}\in W_{ex}^{n_{i}}$ such that
$h_{i|\N_{ev}}=\widehat{g_{i}}$.  Since $W_{ex}$ is closed in projections on
intervals we may assume that  $\ran(h_{i})=\ran(\widehat{g_{i}})$ for
every $i$.  Moreover from the choice of $m$ we have also that
$h_{m+1|\N_{ev}}=\widehat{g}_{m+1}\notin \cup_{k=1}^{\infty}\widehat{K}_{k}^{2j_{m+1}}$.

It follows that $
(f_{1},\widehat{g}_{1},\dots,f_{l},\widehat{g}_{l},\widehat{g}_{l+1},\dots,\widehat{g_{m}},
h_{m+1},\dotsm h_{p})$
is a $(2j-1, V_{\bar{n}_{0}}, (\varrho_{k}^{2i})_{k=1}^{\bar{n}_{0}},i=0,\dots)$
special sequence for some  $\bar{n}_{0}\geq n$.  Hence\[
\phi=m_{2j-1}^{-1}\left(\sum_{i=1}^{l}(f_{i}+\widehat{g}_{i})+\sum_{i=l+1}^{m}\widehat{g}_{i}+\sum_{i=m+1}^{p}h_{i}\right)
\in W_{ex}
\andd \phi_{|\N_{ev}}=\widehat{g}.
\]
\end{proof}
As a corollary we get  the following.
\begin{proposition}\label{wev}
For the norming sets   $W_{hi}$ and $W_{ex}$ of the spaces $X_{hi}$ and
$X_{ex}$  we have that
\[ W_{ex|\N_{ev}}=\widehat{W}_{hi}.\]
Morever the space $X_{hi}$ embeds isometrically  into  $X_{ex}$. 
\end{proposition}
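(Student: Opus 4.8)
The plan is to deduce Proposition~\ref{wev} directly from the two preceding lemmas, which between them establish the two inclusions that make up the identity $W_{ex|\N_{ev}}=\widehat W_{hi}$. First I would observe that Lemma~\ref{step1} gives the inclusion $W_{ex|\N_{ev}}\subseteq\widehat W_{hi}$: indeed, since $W_{ex}=\cup_n W_n$, every $f\in W_{ex}$ lies in some $W_n$, and Lemma~\ref{step1} says $f_{|\N_{ev}}\in\widehat W_{hi}$ for all such $f$. Conversely, Lemma~\ref{step2} shows that for every $g\in W_{hi}$ there is $\phi\in W_{ex}$ with $\phi_{|\N_{ev}}=\widehat g$, which gives $\widehat W_{hi}\subseteq W_{ex|\N_{ev}}$. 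Combining the two inclusions yields the equality.

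For the ``moreover'' clause, I would argue that the map $e_n\mapsto e_{2n}$ extends to a linear isometry from $X_{hi}$ onto the closed subspace $\widehat X_{hi}=\overline{\langle e_{2n}:n\in\N\rangle}$ of $X_{ex}$. For $x=\sum_n a_n e_n\in c_{00}(\N)$ write $\widehat x=\sum_n a_n e_{2n}$. Then, using that $B_{X_{ex}^*}=\overline{W_{ex}}^{\norm{}}$ and that the norm is the supremum over the norming set, one computes
\[
\norm{\widehat x}_{W_{ex}}=\sup\{f(\widehat x):f\in W_{ex}\}=\sup\{f_{|\N_{ev}}(\widehat x):f\in W_{ex}\}=\sup\{\widehat g(\widehat x):g\in W_{hi}\},
\]
where the last equality uses $W_{ex|\N_{ev}}=\widehat W_{hi}$ together with the fact that only the even coordinates of $f$ act on $\widehat x$. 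Since $\widehat g(\widehat x)=\sum_n g(n)a_n=g(x)$, the right-hand side equals $\sup\{g(x):g\in W_{hi}\}=\norm{x}_{W_{hi}}$. Hence the map is isometric on $c_{00}(\N)$ and extends to an isometric embedding of $X_{hi}$ into $X_{ex}$ with range $\widehat X_{hi}$.

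I do not anticipate any real obstacle here: all the substantive work — in particular, verifying that the paired, semi-paired and special sequences of Definitions~\ref{pairedspecials}--\ref{wspecials} restrict correctly to $\N_{ev}$ and that every $(2j-1)$-special sequence of $W_{hi}$ can be lifted — has already been carried out in Lemmas~\ref{step1}, \ref{wh1specials} and \ref{step2}. The only point requiring a moment's care is bookkeeping with the indices: Lemma~\ref{step2} produces $\phi\in W_{ex}^{n_0}$ for some $n_0\geq n$ rather than in a fixed level, but since $W_{ex}=\cup_n W_n$ this is irrelevant for membership in $W_{ex}$. One should also note explicitly that $f(\widehat x)$ depends only on $f_{|\N_{ev}}$ because $\widehat x$ is supported on $\N_{ev}$, which is what licenses replacing $W_{ex}$ by $W_{ex|\N_{ev}}$ in the supremum.
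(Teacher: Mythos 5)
Your proposal is correct and follows exactly the route the paper intends: the equality $W_{ex|\N_{ev}}=\widehat W_{hi}$ is obtained by combining the two inclusions from Lemma~\ref{step1} and Lemma~\ref{step2}, and the isometry of $e_n\mapsto e_{2n}$ then follows from the supremum formula for the norm together with the fact that a functional's action on a vector supported in $\N_{ev}$ depends only on its restriction to $\N_{ev}$. The only superfluous ingredient is the appeal to $B_{X_{ex}^*}=\overline{W_{ex}}^{\norm{\cdot}}$, which is not needed since the norm is by definition the supremum over the norming set.
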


\section{A criretion  for the  HI property}
In this section we provide  a criterion for a  Banach space to be an  HI space. It is equivalent to \cite[Lemma 7.1]{CFG}, but we provide a proof for completeness.
\begin{proposition}\label{hi-1}
 Let $X$ be a  Banach space and $X_{0}$ be an  HI subspace of
 $X$. If for every subspace $Z$ of $X$  we have $\dist (S_{X_{0}},
 S_{Z})=0$ then the space $X$ is  HI.
\end{proposition}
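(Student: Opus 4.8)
The plan is to argue by contradiction: suppose $X$ is not HI, so there exist infinite-dimensional closed subspaces $Y, W$ of $X$ with $Y \cap W = \{0\}$ and with $Y + W$ closed, equivalently, the projection $P$ from $Y \oplus W$ onto $Y$ along $W$ is bounded. Set $E = Y \oplus W$ and $\delta = \|P\|^{-1} > 0$, so that for all $y \in Y$, $w \in W$ one has $\max\{\|y\|, \|w\|\} \le \delta^{-1}\|y + w\|$ (after normalizing). The first step is to use the hypothesis $\dist(S_{X_0}, S_Z) = 0$ twice to pull both pieces close to $X_0$: pick unit vectors $y_0 \in Y$ and then $x_1 \in S_{X_0}$ with $\|y_0 - x_1\|$ tiny; similarly pick a unit vector in $W$ close to some $x_2 \in S_{X_0}$. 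This produces two essentially disjointly-supported unit vectors in directions that are (almost) inside $X_0$, but whose difference is controlled below by $\delta$ because they came from the direct sum $Y \oplus W$.

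The key step is to transport the decomposition obstruction from $E \subset X$ into the HI subspace $X_0$. Concretely: assuming $X_0$ were complemented-in-direction enough, one would get a direct sum inside $X_0$, contradicting that $X_0$ is HI. To make this precise I would build, recursively, a normalized basic sequence $(x_n)$ in $X_0$ together with a normalized block-type sequence $(z_n)$ in $E$ with $\|x_n - z_n\|$ summable and very small relative to the basis constant of $(x_n)$, where the $z_n$ alternate between being almost in $Y$ and almost in $W$. Then $Z_1 = \overline{\langle x_{2n-1}\rangle}$ and $Z_2 = \overline{\langle x_{2n}\rangle}$ are infinite-dimensional subspaces of the HI space $X_0$, so $\dist(S_{Z_1}, S_{Z_2}) = 0$; pick normalized $u \in \langle x_{2n-1}: n\le N\rangle$ and $v \in \langle x_{2n}: n \le N\rangle$ with $\|u - v\|$ arbitrarily small. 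On the other hand the corresponding vectors $\tilde u, \tilde v$ obtained by replacing each $x_n$ with $z_n$ lie essentially in $Y$ and $W$ respectively (up to a controllably small error coming from the summability of $\|x_n - z_n\|$), so $\|\tilde u - \tilde v\| \ge \delta \cdot \text{const}$ by the direct-sum estimate, while $\|\tilde u - \tilde v\|$ is within the small perturbation error of $\|u - v\|$. Choosing the errors small enough yields the contradiction.

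The main obstacle is the bookkeeping in the recursive construction: I need the perturbation vectors $z_n$ to simultaneously (i) be close to $x_n$, (ii) be (almost) successive in some FDD or basis of $E$ so that a normalized linear combination $\sum a_n z_n$ with $\sum|a_n|$ bounded inherits a two-sided norm estimate, and (iii) alternate cleanly between $Y$-side and $W$-side. Point (ii) is what lets the direct-sum lower bound $\|\tilde u - \tilde v\| \gtrsim \delta$ survive; it requires passing to a further subspace of $E$ with an FDD, which is available since $E$ is a separable reflexive-or-at-least-Banach space and one can use a standard gliding-hump / small perturbation argument (Mazur's lemma plus the fact that $X_0$, being HI, has a basis or at least a shrinking basic sequence available after passing to a subspace — or simply work with arbitrary normalized sequences and a successive-perturbation of a basic sequence in $X_0$). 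Once the sequences are in place the estimates are routine: everything reduces to comparing $\|u - v\|$ with $\|\tilde u - \tilde v\|$ and invoking the HI property of $X_0$ against the bounded projection on $E$.
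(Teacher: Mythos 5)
Your proposal is correct and follows essentially the same route as the paper's proof: approximate basic sequences from the two given subspaces by basic sequences in $X_{0}$ using the distance hypothesis, apply the HI property of $X_{0}$ to the two approximating subspaces, and transfer the nearly coincident normalized vectors back by a controlled perturbation. The only difference is cosmetic --- you phrase it as a contradiction against a bounded projection on $Y\oplus W$, while the paper directly verifies $\dist(S_{Y_{1}},S_{Y_{2}})=0$ for arbitrary subspaces; your worry about needing an FDD is unnecessary, since (as you also note) Mazur's theorem plus blocking already gives coefficient bounds sufficient for the perturbation estimates.
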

\begin{proof}
We show  that  for every two infinite dimensional closed subspaces
$Y_{1}, Y_{2}$  
of $X$ we have  $\dist (S_{Y_{1}},S_{Y_{2}})=0$. Let $\e \in (0,1/10)$.
We start  with the following observation.

In every subspace $Y$ of $X$ we  can find  a Schauder basic sequence $(y_{n})_{n\in\N}\subset Y$ and a normalized sequence $(x_{n})_{n\in\N}\subset S_{X_{0}}$
such that $\norm{y_{n}-x_{n}}<\frac{\e}{2^{n+3}}$. 
Indeed  using Mazur Theorem~\cite{LT}
we get   a normalized Schauder basic sequence  $(u_{n})_{n}$  in $Y$ with basis  constant  $K\leq 2$.
Using the hypothesis by a standard inductive argument we get a block sequence $(y_{n})_{n\in\N}$  of $(u_{n})_{n}$  and
a  normalized sequence $(x_{n})_{n\in\N}\subset S_{X_{0}}$
such that  $\norm{y_{n}-x_{n}}<\frac{\e}{2^{n+3}}$. This implies that $(x_{n})_{n}$ is also Schauder basic sequence.
  
By the above observation  we find   Schauder basic  sequences $(y_{n}^{1})_{n}\subset Y_{1}$,
$(y_{n}^{2})\subset Y_{2}$  and normalized Schauder basic  sequences
$(x_{n}^{1})_{n}, (x_{n}^{2})_{n}$ in $X_{0}$ such that
$\norm{y_{n}^{i}-x_{n}^{i}}<\e/2^{n+3}$   for every $n$ and $i=1,2$.
Since  the subspace $X_{0}$ is HI,  there exist normalized vectors $x_{1}\in
[x_{n}^{1}:n\in\N]$ and
$x_{2}\in [x_{n}^{2}:n\in\N]$ such that  $\norm{x_{1}-x_{2}}<\e/3$.
Since  $(x_{n}^{1})_{n}$ and $(x_{n}^{2})_{n}$ are Schauder  basic
sequences  by $\e/3$ perturbation we may assume that there exists
$n_{1}\in\N$ such that
$x_{1}\in[x_{n}^{1}:n\leq n_{1}]$ and
$x_{2}\in[x_{n}^{2}:n\leq n_{1}]$. Let
$x_{1}=\sum_{i=1}^{n_{1}}a_{n}x_{n}^{1}$ and
$x_{2}=\sum_{i=1}^{n_{1}}b_{n}x_{n}^{2}$.
Then
\begin{equation*}
  \begin{aligned}
  \norm{\sum_{i=1}^{n_{1}}a_{n}y_{n}^{1}-\sum_{i=1}^{n_{1}}b_{n}y_{n}^{2}}&\leq
  \norm{x_{1}-x_{2}}+
  \norm{\sum_{i=1}^{n_{1}}a_{n}y_{n}^{1}-\sum_{i=1}^{n_{1}}a_{n}x_{n}^{1}}+
    \norm{\sum_{i=1}^{n_{1}}b_{n}x_{n}^{2}-\sum_{i=1}^{n_{1}}b_{n}y_{n}^{2}}
    \\ &\leq3\frac{\e}{3}+\max_{n}\abs{a_{n}}\frac{\e}{2^{3}}+\max_{n}\abs{b_{n}}\frac{\e}{2^{3}}<3\e.
  \end{aligned}
\end{equation*}
Using once more that  $\norm{x_{n}^{i}-y_{n}^{i}}<\sfrac{\e}{2^{n+3}}$ 
we get that  $\dist
(S_{Y_{1}},S_{Y_{2}})<10\e$.  This completes the proof that
 $\dist (S_{Y_{1}},S_{Y_{2}})=0$ and therefore $X$ is HI.
\end{proof}

\section{The space  $X_{ex}$ is  HI}\label{xexishi}
To show that the space $X_{ex}$ is  HI
 we  will use
Proposition~\ref{hi-1} and as $X_{0}$ the space $\widehat{X}_{hi}$.   In
order to show that the assumption of the lemma holds 
we will use  the  RISs and the
standard exact pairs as we did for the space  $X_{hi}$.

\subsection{Dependent paired sequences}
In this subsection we shall define the dependent paired sequences which is the analogue
of the  dependent sequences defined for the original space  $X_{ex}$
and we will provide upper estimations for the vectors defined by them.
The definition
of the $(C,\e)$-RIS is the same  as in Definition~\ref{defRIS} with the
obvious modifications.  Similar the definition   of the
$(C,j_{0})$-standard exact pair (SEP) is as in Definition~\ref{defSEP}.

To get upper bounds for the estimation of a  weighted functional on a
$(C,\e)$-RIS
we shall use the basic inequality and  a slight modification of the
auxiliary space.
The  new auxiliary space is the mixed Tsirelson space
$X_{aux2}=T[(\mc{A}_{12n_{j}}, m_{j}^{-1})]_{j\in\N}$.  We denote by
$W_{aux2}$ its norming set.  The choice  of the sequences
$(m_{j})_{j\in\N}, (n_{j})_{j\in\N}$    gives us that on the averages  of
the basis we have
similar estimations as in Lemma~\ref{basisaverages}.
Namely the following holds.
\begin{lemma}\label{basisaverages3}
 Let $x=\frac{m_{j}}{n_{j}}\sum_{k=1}^{n_{j}}e_{k}\in X_{aux2}$. Then  for 
every $f\in W_{aux2}$ with  $w(f)=m_{i}$.
\[
  \abs{f(x)}\leq\begin{cases}\frac{2}{m_{i}}\,\,\text{if $i<j$}\\
        \frac{m_{j}}{m_{i}}\,\,\text{if $i\geq j $}.
  \end{cases}
\]
Moreover  if $f$  has a tree analysis $(f_{\alpha})_{\alpha\in\tree}$ such
that for every weighted $f_{\alpha}$   we have $w(f_{\alpha})\ne m_{j}$  then
$\abs{f(x)}\leq \frac{2}{m^{2}_{j}}$.
\end{lemma}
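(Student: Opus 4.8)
The plan is to repeat, essentially word for word, the tree‑analysis estimate that proves Lemma~\ref{basisaverages} (see \cite{ATod}, Lemma II.9, or \cite{AManStudia}, Lemma 4.2). The only difference between $X_{aux2}=T[(\mc{A}_{12n_j},m_j^{-1})_j]$ and the space $X_{aux}$ of that lemma is that the admissibility families $\mc{A}_{4n_j}$ are replaced by $\mc{A}_{12n_j}$, and in the cited proof the integer multiplying $n_j$ intervenes only in two ways: through the bound ``a weighted node of weight $m_j$ has at most that multiple of $n_j$ immediate successors'', and through the requirement $n_j\le$ that multiple of $n_j$, which guarantees $m_j^{-1}\sum_{k=1}^{n_j}e_k^{*}\in W_{aux2}$ and thereby fixes the scaling of $x$. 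Passing from the constant $4$ to $12$ only weakens the first of these and preserves the second; every numerical inequality used in the induction rests on the growth relations $m_{j+1}=m_j^{5}$ and $2^{s_j}\ge m_{j+1}^{3}$ in $n_{j+1}=(12n_j)^{s_j}$, which are unchanged. Thus it remains only to recall the skeleton and to note that these relations are invoked in the robust form that still tolerates the larger admissibility constants.

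For the skeleton, the case $i\ge j$ is immediate: a functional $f\in W_{aux2}$ with $w(f)=m_i$ is a restriction to an interval of an $(\mc{A}_{12n_i},m_i^{-1})$‑operation of functionals of $\ell_\infty$‑norm $\le 1$ with pairwise disjoint supports, so $\norm{f}_\infty\le m_i^{-1}$; since $\supp x$ has exactly $n_j$ elements and $\norm{x}_\infty=m_j/n_j$, we get $\abs{f(x)}\le m_i^{-1}\cdot n_j\cdot m_j/n_j=m_j/m_i$. For $i<j$ one fixes a tree analysis $(f_\alpha)_{\alpha\in\tree}$ of $f$, writes $N_\alpha$ for the number of elements of $\supp f_\alpha\cap\{1,\dots,n_j\}$, and proves by induction from the maximal nodes of $\tree$ upward the node‑local, mass‑weighted analogues of the two displayed bounds: at a weighted node of weight $m_l$ with $l<j$ the operation costs exactly the factor $m_l^{-1}$, because the successor masses $N_\beta$ add up to $N_\alpha$ and so distributing the mass is all one pays; a successor that already carries weight $m_l$ with $l\ge j$ is controlled by the estimate of the first case, and the rapid growth of $(n_l)$ guarantees the total contribution of such successors is absorbed into the constant. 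Convex‑combination nodes are trivial since they do not increase the estimate. Evaluating at the root yields $\abs{f(x)}\le 2/m_i$.

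For the \emph{moreover} part one runs the same induction under the additional hypothesis that no weighted node of the tree analysis has weight $m_j$. The operation of weight $m_j$ is the unique one matched to the dilation factor $m_j/n_j$ of $x$ — it is the one realizing the value $\frac{m_j}{n_j}\cdot m_j^{-1}\sum_{k=1}^{n_j}e_k^{*}(\sum_{k}e_k)=1$ — so forbidding it forces, at each level where it would otherwise have been used, an extra factor $m_j$ in the denominator; propagating this refinement through the tree gives $\abs{f(x)}\le 2/m_j^{2}$.

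The step I expect to be the main obstacle is the bookkeeping in the case $i<j$: one has to split the nodes of the tree analysis according to whether their weight is $<m_j$, $=m_j$, or $>m_j$, keep track of how the $n_j$ units of mass of $\supp x$ are distributed among the branches at each node, and verify that the extremal configuration — a single chain of weight‑$<m_j$ operations terminating in one copy of $m_j^{-1}\sum_{k}e_k^{*}$ — is exactly the one the constant $2$ accounts for. The growth relations between $(m_j)$ and $(n_j)$ enter precisely at this point, to guarantee that branches carrying weight $\ge m_j$ produce only lower‑order terms.
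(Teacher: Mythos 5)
Your proposal is correct and follows essentially the same route as the paper, which simply observes that the choice $n_{j+1}=(12n_j)^{s_j}$ with $2^{s_j}\geq m_{j+1}^3$ was made precisely so that the argument of Lemma~\ref{basisaverages} (cited from \cite{ATod}, Lemma II.9) goes through verbatim with the admissibility constant $12$ in place of $4$. Your identification of where that constant enters the induction, and of the growth relations that absorb it, is exactly the point.
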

For a proof of this result see \cite{ATod},Lemma II.9,  (or
\cite{AManStudia},  Lemma 4.2).

The upper bounds  of the action of a weighted functional on averages of a $(C,\e)$-RIS in the space $X_{ex}$ are the same as in Proposition~\ref{eris}.
For sake of completeness we state it again.
\begin{proposition}\label{upperSEP2}
 Let $(x_{k})_{k=1}^{n_{j}}$  be a   $(C,m_{j}^{-2})$-RIS in $X_{ex}$
 and $x=n_{j}^{-1}\sum_{k=1}^{n_{j}}x_{k}$. Then for every $f\in W_{ex}$
 with $w(f)=m_{i}$  we have
 \begin{equation}
   \label{eq:13}
   \abs{f(x)}\leq\begin{cases}
     \frac{3C}{m_{i}m_{j}}\,\,\, &\text{if  $i<j$}\\
     \frac{C}{m_{i}}+\frac{2C}{n_{j}}\,\,\, &\text{if  $i\geq j$}
    \end{cases}.
 \end{equation}
\end{proposition}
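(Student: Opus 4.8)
The statement to prove is Proposition~\ref{upperSEP2}: for a $(C,m_j^{-2})$-RIS $(x_k)_{k=1}^{n_j}$ in $X_{ex}$, its average $x = n_j^{-1}\sum_{k=1}^{n_j} x_k$ satisfies $|f(x)| \le 3C/(m_im_j)$ when $w(f) = m_i$, $i<j$, and $|f(x)| \le C/m_i + 2C/n_j$ when $i \ge j$. Since the same estimate was already established as Corollary~\ref{eris} for a general EO-complete subset $W$ of $W_{mT}$, and since the defining properties used there were purely those of an EO-complete norming set plus the basic inequality (Proposition in \S\ref{basicin}) together with the averaging estimate of Lemma~\ref{basisaverages}, the plan is to check that nothing in that argument is disturbed by the presence of the paired special functionals in $W_{ex}$.

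The plan is as follows. First I would invoke the basic inequality, which applies to $X_{ex}$ because $W_{ex}$ is EO-complete (this is recorded after the definition of $W_{ex}$, and reflexivity plus $B_{X_{ex}^*} = \overline{W_{ex}}$ follow). Given $f \in W_{ex}$ with $w(f) = m_i$ and a RIS $(x_k)_{k=1}^{n_j}$, the basic inequality produces $g_1 = h_1$ or $g_1 = e_t^* + h_1$ with $h_1 \in W_{aux2}$, $w(h_1) = w(f) = m_i$, and $g_2$ with $\|g_2\|_\infty \le \e = m_j^{-2}$, such that $|f(\sum_k \lambda_k x_k)| \le C(g_1 + g_2)(\sum_k |\lambda_k| e_k)$; here we take all $\lambda_k = n_j^{-1}$. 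Second, I would estimate $(g_1 + g_2)(n_j^{-1}\sum_{k=1}^{n_j} e_k)$ using Lemma~\ref{basisaverages3} (the analogue of Lemma~\ref{basisaverages} for $X_{aux2}=T[(\mathcal A_{12n_j},m_j^{-1})]$) applied to the vector $\frac{m_j}{n_j}\sum_{k=1}^{n_j} e_k$, rescaled by $m_j^{-1}$: this gives $h_1(n_j^{-1}\sum e_k) \le 2/(m_im_j)$ when $i<j$ and $\le 1/m_i$ when $i\ge j$. The contribution of the possible extra $e_t^*$ is at most $n_j^{-1}$, and the contribution of $g_2$ is at most $\|g_2\|_\infty \cdot n_j^{-1}\cdot n_j = m_j^{-2} \le 1/n_j$ (or absorbable into the stated constants). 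Adding these pieces and comparing with $3C/(m_im_j)$ in the case $i<j$ and with $C/m_i + 2C/n_j$ in the case $i\ge j$ yields the two inequalities exactly as in Corollary~\ref{eris}.

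The step that requires the most care — and the main obstacle — is verifying that the basic inequality is genuinely available for $W_{ex}$ in the form stated in \S\ref{basicin}, since $W_{ex}$ contains the paired special functionals, whose odd part lives on $\N_{od}$ and whose even part is an element of $\widehat W_{hi}$. One must check that a paired special functional, when analyzed through its tree analysis (which exists because $W_{ex}$ is EO-complete), behaves like a weighted functional of type~$I$ with weight $m_{2j-1}$ for the purposes of the basic inequality: in particular that the odd-supported summands $f_k$ and the $\widehat g_k$ and $h_i$ pieces contribute no more than a single $(\mathcal A_{2n_{2j-1}}, m_{2j-1}^{-1})$-operation would. Here the factor $2n_{2j-1}$ (rather than $n_{2j-1}$) in the odd operations of $X_{mT}$ is exactly what is needed so that a paired special functional $m_{2j-1}^{-1}\sum_k (f_k + \widehat g_k) + \cdots$, with at most $n_{2j-1}$ pairs hence at most $2n_{2j-1}$ atoms, remains a legitimate weighted functional of $W_{mT}$; and the auxiliary space was correspondingly thickened to $X_{aux2} = T[(\mathcal A_{12n_j}, m_j^{-1})]$ so that Lemma~\ref{basisaverages3} still holds. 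Once this structural observation is in place, the estimate is formally identical to the proof of Corollary~\ref{eris}, and I would simply cite that computation, noting the replacement of $W_{aux}$ by $W_{aux2}$ and of Lemma~\ref{basisaverages} by Lemma~\ref{basisaverages3}.
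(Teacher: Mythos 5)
Your proposal is correct and follows essentially the same route as the paper, which simply asserts that the bounds of Corollary~\ref{eris} carry over, the proof being the basic inequality combined with Lemma~\ref{basisaverages3} for the enlarged auxiliary space $T[(\mathcal{A}_{12n_j},m_j^{-1})_j]$. Your additional observation—that the families $\mathcal{A}_{2n_{2j-1}}$ in the odd operations and the thickening of the auxiliary space are exactly what is needed for the paired special functionals to be treated as weighted functionals in the basic inequality—is precisely the point the paper leaves implicit, so nothing is missing.
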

 To show that the assumptions of the Lemma~\ref{hi-1} hold we
shall use  the  notion of dependent paired sequences.
\begin{definition}
  A double sequence $(x_{k},f_{k})_{k=1}^{2n_{2j-1}}$ is said to be a $(C,2j-1)$-dependent paired sequence  if there  exist $j_{k},k\leq n_{2j-1}$ such that
  \begin{enumerate}
  \item $x_{2k}=\frac{m_{2j_{k}}}{n_{2j_{k}}}\sum_{i\in F_{k}}e_{i}$, $\# F_{k}=n_{2j_{k}}$ and $F_k\subset\N_{ev}$, and $f_{2k}=m_{2j_{k}}^{-1}\sum_{i\in F_{k}}e_{i}^{*}$.
  \item  $(x_{2k-1},f_{2k-1})$ is a $(C,2j_{k})$-standard exact pair with $\supp(f_{2k-1})\subset\N_{od}$ for every $k$.
\item  $(f_{i})_{i=1}^{2n_{2j-1}}$ is a $(2j-1)$-special sequence of $W_{ex}$ with $w(f_{2k-1})=m_{2j_{k}}=w(f_{2k})$ for every $k$.
  \end{enumerate}
\end{definition}
Let us note that if $(x_{k},f_{k})_{k=1}^{2n_{2j-1}}$ is a  $(C,2j-1)$-dependent paired sequence
then the functional
$F=m_{2j-1}^{-1}\sum_{k=1}^{n_{2n-1}}(f_{2k-1}+f_{2k})\in W_{ex}$ and
\[
\norm{ \frac{m_{2j-1}}{n_{2j-1}}\sum_{k=1}(x_{2k-1}+x_{2k})}\geq
  F(\frac{m_{2j-1}}{n_{2j-1}}\sum_{k=1}(x_{2k-1}+x_{2k}))=2
\]
while  $F(n_{2j-1}^{-1}\sum_{k=1}(x_{2k-1}-x_{2k}))=0$. Our aim is to
show that there exists a $K$ such that for every $j$ and $(C,2j-1)$-dependent paired sequence we have
\[\norm{\frac{m_{2j-1}}{n_{2j-1}}\sum_{k=1}^{n_{2j-1}}(x_{2k-1}-x_{2k})}\leq
  KCm_{2j-1}^{-1}.
  \]
 For this  we    employ  a
variant of the basic inequality.
\begin{proposition}[A variant  of the basic inequality]\label{vbin}
  Let $(x_{k},f_{k})_{k=1}^{2n_{2j-1}}$ be a $(C,2j-1)$-dependent paired sequence and $\lambda_{k},k\leq 2n_{2j-1}$ be scalars.
 Then for every weighted functional $f\in W_{ex}$   we can find  $g_{1},g_{2}\in c_{00}(\N)$
  having non-negative coordinates such that   $\norm{g_{2}}_{\infty}<m_{2j-1}^{-3}$
  and  $g_{1}=h_{1}$ or $g_{1}=e^{*}_{k}+h_{1}$  or $g_{1}=e^{*}_{k}+e^{*}_{k+1}+h_{1}$
  for some  $h_{1}\in W_{aux2}$ with $w(f)=w(h_{1})$  and  $k\notin \supp(h_{1})$  if
  $g_{1}=e^{*}_{k}+h_{1}$ or $k,k+1\notin\supp (h_{1})$ if $g_{1}=e^{*}_{k}+e^{*}_{k+1}+h_{1}$,   satisfying
  \begin{equation}
    \label{eq:3}
    \abs{f(\sum_{k=1}^{2n_{2j-1}}\lambda_{k}x_{k})}
    \leq C(g_{1}+g_{2})\left(\sum_{k=1}^{2n_{2j-1}}\lambda_{k}e_{k}\right).
  \end{equation}
  If we additionally assume that there exist $j_{0}\in\N$ such that for all weighted functionals $g\in W_{ex}$  with $w(g)=m_{j_{0}}$ and every interval $E$ of $\N$ we have that
  \begin{equation}
    \label{eq:10}
  \abs{g(\sum_{k\in E}\lambda_{k}x_{k})}\leq C(\max_{k}\abs{\lambda_{k}}+m_{2j-1}^{-3}\sum_{k\in E}\abs{\lambda_{k}})  
\end{equation}
then if $w(f)\ne m_{j_{0}}$ we may select $h_{1}$ to have a tree analysis  $(h_{t})_{t\in \mc{T}}$  with $w(h_{t})\ne m_{j_{0}}$ for all $t\in\mc{T}$.
\end{proposition}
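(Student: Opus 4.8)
The plan is to imitate the proof of the original basic inequality (\cite{ATod}, Proposition~II.14, or \cite{AManStudia}, Proposition~4.3), inducting on a tree analysis $(f_{t})_{t\in\mc{T}}$ of $f\in W_{ex}$ and feeding into it the new paired structure of $(x_{k},f_{k})_{k=1}^{2n_{2j-1}}$. As usual one first passes, using a refined tree analysis (as in \cite{ATolmemoir}, Lemma~3.15), to a tree analysis whose rational convex combinations occur only over weighted functionals with nested ranges, so that the essential case is that of weighted nodes. For a node $t$ put $E_{t}=\{k\le 2n_{2j-1}:\ran(f_{t})\cap\ran(x_{k})\ne\emptyset\}$, and prove by induction on the height of the subtree at $t$ the existence of $g_{1}^{t},g_{2}^{t}\in c_{00}(\N)$ with non-negative coordinates, $\norm{g_{2}^{t}}_{\infty}<m_{2j-1}^{-3}$, $\supp(g_{i}^{t})\subset E_{t}$, and $g_{1}^{t}$ of one of the forms $h_{1}^{t}$, $e_{k}^{*}+h_{1}^{t}$, $e_{k}^{*}+e_{k+1}^{*}+h_{1}^{t}$, where $h_{1}^{t}\in W_{aux2}$, $w(h_{1}^{t})=w(f_{t})$ and the distinguished coordinate(s) avoid $\supp(h_{1}^{t})$, such that $\abs{f_{t}(\sum_{k\in E_{t}}\lambda_{k}x_{k})}\le C(g_{1}^{t}+g_{2}^{t})(\sum_{k\in E_{t}}\abs{\lambda_{k}}e_{k})$; specialising to the root gives \eqref{eq:3}. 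In this scheme Lemma~\ref{basisaverages3} and Proposition~\ref{upperSEP2} play the roles of Lemma~\ref{basisaverages} and Corollary~\ref{eris} in the classical argument, and the larger families $\mc{A}_{12n_{j}}$ of $X_{aux2}$ are used to absorb the extra bookkeeping coming from the pairs and the $e_{k}^{*}$-terms.

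The terminal and non-resonant steps are the classical ones. If $f_{t}=\pm e_{n}^{*}$, then $f_{t}$ acts on a single $x_{k_{0}}$, and since $x_{2k}=\frac{m_{2j_{k}}}{n_{2j_{k}}}\sum_{i\in F_{k}}e_{i}$ and $x_{2k-1}$ is a standard exact pair built from $\ell_{1}^{n_{i}}$-averages (Definition~\ref{defSEP}), the growth of $(m_{j})_{j},(n_{j})_{j}$ together with $m_{2j_{1}}\ge n_{2j-1}^{2}$ forces $\norm{x_{k}}_{\infty}<m_{2j-1}^{-3}$, so the contribution is absorbed into $g_{2}^{t}$. If $f_{t}=m_{i}^{-1}\sum_{\beta\in S_{t}}f_{\beta}$ is weighted with $w(f_{t})=m_{i}\ne m_{2j_{k}}$ for all $k\in E_{t}$, split $E_{t}$ according to whether $2j_{k}<i$ or $2j_{k}>i$: for $2j_{k}>i$ the RIS property (Definition~\ref{defRIS}(3), via Lemma~\ref{i<j}) gives $\abs{f_{t}(x_{k})}\le 3C/m_{i}$ for all but the at most one index whose support straddles two successive $f_{\beta}$'s, that one still contributing $\le C/m_{i}$ plus a $g_{2}^{t}$-term; for $2j_{k}<i$, apply the inductive hypothesis to each $f_{\beta}$ and recombine the resulting functionals of $W_{aux2}$ through an $(\mc{A}_{12n_{j}},m_{j}^{-1})$-operation into a single $h_{1}^{t}$ of weight $m_{i}$, passing from RIS estimates to $W_{aux2}$-estimates by Lemma~\ref{basisaverages3}. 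Here $g_{1}^{t}=h_{1}^{t}$, with no $e_{k}^{*}$-term.

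The genuinely new step is the resonant one: $w(f_{t})=m_{2j_{k_{0}}}$ for some $k_{0}$. Since the weights $m_{2j_{k}}$ occurring along a $(2j-1)$-special sequence are pairwise distinct (the tree-like property, Remarks~\ref{treespecial} and \ref{treestructure} and the tree-structure lemma for semi paired special sequences), $k_{0}$ is unique and the two vectors carrying that weight are $x_{2k_{0}-1},x_{2k_{0}}$, at the consecutive indices $2k_{0}-1,2k_{0}$. One estimates this pair crudely, $\abs{f_{t}(\lambda_{2k_{0}-1}x_{2k_{0}-1})}\le C\abs{\lambda_{2k_{0}-1}}$ and $\abs{f_{t}(\lambda_{2k_{0}}x_{2k_{0}})}\le\norm{x_{2k_{0}}}\abs{\lambda_{2k_{0}}}\le C\abs{\lambda_{2k_{0}}}$, which gives the summand $e_{2k_{0}-1}^{*}+e_{2k_{0}}^{*}$ of $g_{1}^{t}$; the remaining $k\in E_{t}$ with $2j_{k}<2j_{k_{0}}$, being standard exact pairs of strictly smaller index, have $\abs{f_{t}(x_{k})}\le 3C\max\{w(f_{t})^{-1},m_{2j_{k}}^{-2}\}$ by the SEP estimate recorded after Proposition~\ref{smallnorm}, so they go into $g_{2}^{t}$, while those with $2j_{k}>2j_{k_{0}}$ are handled as in the non-resonant case and feed $h_{1}^{t}$. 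Thus $g_{1}^{t}=e_{2k_{0}-1}^{*}+e_{2k_{0}}^{*}+h_{1}^{t}$ with $w(h_{1}^{t})=w(f_{t})$ and $2k_{0}-1,2k_{0}\notin\supp(h_{1}^{t})$; the tree-like property then forbids any sub-functional below $t$ from sharing a weight with the special sequence, so no further $e_{k}^{*}$-overhead is generated. For the ``moreover'' part, one modifies the induction so that a node $f_{t}$ with $w(f_{t})=m_{j_{0}}$ is not expanded through its successors; instead \eqref{eq:10} bounds $\abs{f_{t}(\sum_{k\in E_{t}}\lambda_{k}x_{k})}$ by $C(\max_{k\in E_{t}}\abs{\lambda_{k}}+m_{2j-1}^{-3}\sum_{k\in E_{t}}\abs{\lambda_{k}})$, contributing one $e_{k}^{*}$-term at the maximising coordinate and a $g_{2}^{t}$-term and leaving $h_{1}^{t}$ empty; pruning the tree analysis at such nodes, the recombination never reintroduces the weight $m_{j_{0}}$, so the final $h_{1}$ has no weighted functional of weight $m_{j_{0}}$ in its own tree analysis.

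The step I expect to be the main obstacle is the bookkeeping of the $e_{k}^{*}$-overhead: one must check that at every weighted node the functionals of the three admissible forms coming from the successors recombine into a functional again of one of the three admissible forms --- that is, that the $e_{k}^{*}$-coordinates produced in different subtrees neither accumulate nor spread beyond two consecutive coordinates, and that their indices stay outside $\supp(h_{1})$. This is exactly where the tree-like property of the paired special sequence is indispensable: a fixed functional resonates with at most one pair $\{x_{2k_{0}-1},x_{2k_{0}}\}$, the resonance is triggered at a single node of the tree analysis, and below that node every further weight coincidence is excluded. Together with the $12$-fold slack built into the families $\mc{A}_{12n_{j}}$, this keeps the recombination inside $W_{aux2}$. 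Apart from this combinatorial point the remaining estimates are routine adaptations of \cite{ATod}, Proposition~II.14 and its moreover part, and \cite{AManStudia}, Proposition~4.3.
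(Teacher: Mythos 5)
Your proposal is correct and follows essentially the same route as the paper, which itself only sketches the argument by reference to the original basic inequality (\cite{ATod} Prop.~II.14, \cite{AManStudia} Prop.~4.3) and notes the single modification that a functional of a given weight can now ``norm'' the two vectors $x_{2k-1},x_{2k}$ rather than one. Your resonant-node analysis producing the $e_{2k_0-1}^*+e_{2k_0}^*$ overhead, absorbed by the enlarged families $\mc{A}_{12n_j}$ of $W_{aux2}$, is exactly that modification, and the rest of your induction on the tree analysis matches the standard proof.
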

The proof is identical to the original  one with the only exception  that  a functional with weight  in the interval
$[m_{2j_k}, m_{2j_{k+1}})$ ``can  norm''  at most two  vectors, namely $x_{2k-1},x_{2k}$, instead  of  one in the  case  of the original basic inequality.

As a corollary of the basic inequality  we get the following.
\begin{corollary}\label{x-y}
Let  $(x_{k},f_{k})_{k=1}^{2n_{2j-1}}$  be  a $(C,2j-1)$-dependent paired sequence.  Then
\begin{equation}
  \label{eq:11}
  \norm{\frac{m_{2j-1}}{n_{2j-1}}\sum_{k=1}^{n_{2j-1}}(x_{2k-1}-x_{2k})}\leq \frac{12C}{m_{2j-1}}.
\end{equation}
\end{corollary}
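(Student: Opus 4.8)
The plan is to apply the variant of the basic inequality (Proposition~\ref{vbin}) to the alternating-signs vector $x = \frac{m_{2j-1}}{n_{2j-1}}\sum_{k=1}^{n_{2j-1}}(x_{2k-1}-x_{2k})$, i.e.\ to the functional $f\in W_{ex}$ achieving $\|x\|$, and to bound $f(x)$ by comparing with the auxiliary space $X_{aux2}$. Concretely, writing $x=\frac{m_{2j-1}}{n_{2j-1}}\sum_{k=1}^{2n_{2j-1}}\lambda_k x_k$ with $\lambda_k=(-1)^{k+1}$, we want the estimate $|f(x)|\leq C(g_1+g_2)\big(\frac{m_{2j-1}}{n_{2j-1}}\sum_k\lambda_k e_k\big)$ from \eqref{eq:3} to yield the bound $12Cm_{2j-1}^{-1}$. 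By the $(C,2j-1)$-dependent paired structure, $(f_i)_{i=1}^{2n_{2j-1}}$ is a $(2j-1)$-special sequence, so one may take $j_0=2j-1$: I first verify the hypothesis \eqref{eq:10}, namely that any weighted $g\in W_{ex}$ with $w(g)=m_{2j-1}$ satisfies $|g(\sum_{k\in E}\lambda_k x_k)|\leq C(\max_k|\lambda_k|+m_{2j-1}^{-3}\sum_{k\in E}|\lambda_k|)$ for every interval $E$. This is where the tree-structure of the (semi-)paired special sequences and of $(2j-1)$-special sequences in $W_{hi}$ enters: such a $g$ is of the form $m_{2j-1}^{-1}\sum(g_i'+\hat g_i')+\cdots$ built from a special sequence, and by the tree-like uniqueness property at most the first few of its components can coincide with components of our fixed special sequence $(f_i)$ — the remaining components have weights that differ from all the $w(f_{2k-1})=w(f_{2k})=m_{2j_k}$, so they see each $x_{2k-1}$ and each $x_{2k}=\frac{m_{2j_k}}{n_{2j_k}}\sum_{i\in F_k}e_i$ only through the off-weight SEP estimates and the moreover-part of Lemma~\ref{basisaverages3}, contributing terms of order $m_{2j_k}^{-2}$, which sum to something $\ll m_{2j-1}^{-3}\sum|\lambda_k|$. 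The handful of matching initial components contribute the $C\max_k|\lambda_k|$ term.

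Once \eqref{eq:10} holds, the moreover-part of Proposition~\ref{vbin} gives $g_1=h_1$ (possibly plus one or two unit vectors $e_k^*$) with $h_1\in W_{aux2}$, $w(h_1)=w(f)\neq m_{2j-1}$, and $h_1$ admitting a tree analysis all of whose weighted nodes avoid weight $m_{2j-1}$. I then estimate $h_1\big(\frac{m_{2j-1}}{n_{2j-1}}\sum_k\lambda_k e_k\big)$ in $X_{aux2}$. The vector $\frac{m_{2j-1}}{n_{2j-1}}\sum_{k=1}^{2n_{2j-1}}e_k$ is (up to the harmless factor from $2n_{2j-1}$ versus $n_{2j-1}$) an average of the type handled by Lemma~\ref{basisaverages3}; since $h_1$'s tree analysis has no weighted node of weight $m_{2j-1}$, the moreover-clause of Lemma~\ref{basisaverages3} applies and gives $|h_1(\cdots)|\leq 2m_{2j-1}^{-2}$, hence $C g_1$ contributes at most $2Cm_{2j-1}^{-2}$ plus the two stray $e_k^*$-terms, each at most $C\cdot\frac{m_{2j-1}}{n_{2j-1}}$. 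The error functional obeys $\|g_2\|_\infty<m_{2j-1}^{-3}$, so $Cg_2\big(\frac{m_{2j-1}}{n_{2j-1}}\sum_k\lambda_k e_k\big)\leq C m_{2j-1}^{-3}\cdot m_{2j-1}\cdot 2n_{2j-1}\cdot n_{2j-1}^{-1}=2Cm_{2j-1}^{-2}$. Summing these contributions and absorbing lower-order terms, $|f(x)|\leq 12Cm_{2j-1}^{-1}$, which is exactly \eqref{eq:11}.

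The main obstacle I anticipate is the verification of \eqref{eq:10} — controlling a weighted functional of weight $m_{2j-1}$ against the $\pm1$-weighted block sum. This requires carefully invoking the tree structure of the semi-paired special sequences (the Lemma on their tree structure) together with the conditional cancellation $F(\sum(x_{2k-1}-x_{2k}))=0$ for the ``matching'' functional $F$: the point is that any competing $g$ of weight $m_{2j-1}$ either agrees with a long initial segment of the defining special sequence — in which case the alternation causes the same cancellation up to a boundary term of size $C\max|\lambda_k|$ — or diverges early, after which its remaining weights are off from every $m_{2j_k}$ and its action is tiny. Handling the ``partially paired'' and ``semi-paired'' tails (the $\hat g_i$ with no matching $f_i$, and the genuinely $\widehat W_{hi}$ tail $h_i$) uniformly requires keeping track of which $x_k$ can be normed by a given weight, and it is precisely here that the ``at most two vectors'' phenomenon noted after Proposition~\ref{vbin} must be used in place of the classical ``at most one''. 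The rest is a routine estimate in $X_{aux2}$ via Lemma~\ref{basisaverages3} together with bookkeeping of the $2n_{2j-1}$-versus-$n_{2j-1}$ and $\frac{m_{2j-1}}{n_{2j-1}}$ normalizing factors.
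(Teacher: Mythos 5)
Your proposal follows essentially the same route as the paper: first verify the estimate for weighted functionals of weight $m_{2j-1}$ (the paper's inequality \eqref{eq:14}) using the tree structure of the (semi-)paired special sequences together with the off-weight bounds of Proposition~\ref{upperSEP2} and Lemma~\ref{basisaverages3}, and then invoke the moreover part of Proposition~\ref{vbin} with $j_{0}=2j-1$ and the moreover clause of Lemma~\ref{basisaverages3}. One bookkeeping remark: the dominant term $12C/m_{2j-1}$ in \eqref{eq:11} actually arises from the case $w(f)=m_{2j-1}$, which is handled directly by \eqref{eq:14} (the moreover clause of Proposition~\ref{vbin} applies only when $w(f)\ne m_{j_{0}}$), whereas for $w(f)\ne m_{2j-1}$ your listed contributions are all of order $m_{2j-1}^{-2}$; so the final ``summing these contributions'' step should treat the two cases separately rather than attribute the $12C/m_{2j-1}$ bound to the off-weight estimates.
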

The proof of the corollary follows from Proposition~\ref{vbin} and
Lemma~\ref{basisaverages3} once it is proved
that for every $(2j-1)$-special sequence $(g_{k})_{k=1}^{d}$ of
$W_{ex}$ we have
\begin{equation}
  \label{eq:14}
m_{2j-1}^{-1}\sum_{k=1}^{d}g_{k}(\sum_{k\in E}(-1)^{k+1}x_{k})\leq
  6C (1+ (\#E) m_{2j-1}^{-2}).
\end{equation}
The  proof of this  inequality is based on two key  ingredients which are the same as in the proof of Proposition~\ref{smallnorm}. The
first one is that  for every $k$   the estimation of a weighted 
functional on $x_{2k-1}$ or $x_{2k}$  satisfies upper  bounds which
depend on the weight of the functional (see
Lemma~\ref{basisaverages3} and  Proposition~\ref{upperSEP2}) and
the  second  one  is that the   family of  paired  special sequences has a  tree structure (see  Remark~\ref{treestructure}b).

Using these two ingredients it is shown
that for every $(C,2j-1)$-dependent paired sequence  $(x_{k},f_{k})_{k=1}^{2n_{2j-1}}$ and for every
 $(2j-1)$-special sequence $(g_{k})_{k=1}^{d}$ of
$W_{ex}$ \eqref{eq:14} holds. This implies that the additional part of the variant of the basic inequality holds and using 
Lemma~\ref{basisaverages3} 
we get  \eqref{eq:11}.

\subsection{The  HI property of $X_{ex}$}
We start with the folowing lemma.
\begin{lemma}\label{dist1}
 Let $\e>0$ and $x\in X_{ex}$ be a normalized block vector such that
 $\dist(x, \widehat{X}_{hi})>\e$.  Then for every  $\delta\in
 (0,\e)$ there exists $f\in W_{ex}$  such that
 \[
a)\, \supp(f)\subset \N_{od},\hspace{3cm} b) \, f(x)>\e-\delta.
 \]
\end{lemma}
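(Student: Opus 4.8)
The plan is to use the duality between the norm $\|\cdot\|_{W_{ex}}$ and the norming set $W_{ex}$, together with the decomposition $\N = \N_{od}\cup\N_{ev}$. Since $\dist(x,\widehat X_{hi}) > \e$, the coordinates of $x$ on $\N_{od}$ carry ``most'' of the mass of $x$ in the sense that $\|x\cdot 1_{\N_{od}}\|_{W_{ex}}$ is bounded below. More precisely, I would first observe that $\dist(x,\widehat X_{hi}) \le \|x - x\cdot 1_{\N_{ev}}\|_{W_{ex}} = \|x\cdot 1_{\N_{od}}\|_{W_{ex}}$, because $x\cdot 1_{\N_{ev}}$ lies in $\widehat X_{hi}$ (it is supported on even coordinates, and $\widehat X_{hi} = \overline{\langle e_{2n}:n\in\N\rangle}$). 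Hence $\|x\cdot 1_{\N_{od}}\|_{W_{ex}} > \e$.

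Next I would extract the witnessing functional. By definition of the norm induced by $W_{ex}$ (or by Corollary~\ref{dualball1}/the reflexivity statement, since $B_{X_{ex}^*} = \overline{W_{ex}}^{\|\cdot\|}$), there exists $\tilde f\in W_{ex}$ with $\tilde f(x\cdot 1_{\N_{od}}) > \e - \delta$; approximating within $W_{ex}$ costs at most an arbitrarily small amount, which we absorb into $\delta$ (or we use that $W_{ex}$ itself, not just its closure, gives the supremum up to $\delta$, since $W_{ex}$ is dense in $B_{X_{ex}^*}$). Now set $f = \tilde f\cdot 1_{\N_{od}}$, i.e.\ the restriction of $\tilde f$ to the interval-union $\N_{od}$. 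Since $W_{ex}$ is closed under restrictions to intervals of $\N$ and is rationally convex — and $\N_{od}$ can be written as a limit of such restrictions, so $\tilde f\cdot 1_{\N_{od}}$ lies in $W_{ex}$ (more carefully: $\tilde f$ has finite support, so $\tilde f\cdot 1_{\N_{od}}$ is a finite sum of restrictions of $\tilde f$ to the singleton intervals $\{2k-1\}$ intersecting its support, and a convex combination argument or direct appeal to the interval-restriction closure of $W_{ex}$ yields membership) — we get $f\in W_{ex}$ with $\supp(f)\subset\N_{od}$. Finally $f(x) = \tilde f(x\cdot 1_{\N_{od}}) = \tilde f\big((x\cdot 1_{\N_{od}})\cdot 1_{\N_{od}}\big) > \e - \delta$, since multiplying $x$ by $1_{\N_{od}}$ twice changes nothing and $\tilde f$ vanishes off $\N_{od}$ after restriction anyway. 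This gives (a) and (b) simultaneously.

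The main subtlety — what I expect to be the only real point requiring care — is the membership $f = \tilde f\cdot 1_{\N_{od}}\in W_{ex}$: the set $\N_{od}$ is not a single interval of $\N$, so one cannot simply invoke ``closed under restriction to intervals'' in one step. The fix is that $\tilde f$ has finite support, so its restriction to $\N_{od}$ agrees with a restriction to a finite union of intervals; one then peels off coordinates one at a time using closure under interval restriction, or notes that $\tilde f\cdot 1_{\N_{od}}$ is obtained from $\tilde f$ by a finite sequence of such operations, all staying inside $W_{ex}$. (Alternatively, if the paper's convention already allows restriction to any finite union of intervals, this is immediate.) Everything else — the lower bound $\|x\cdot 1_{\N_{od}}\|_{W_{ex}}>\e$ and the existence of the near-optimal functional — is a routine consequence of the duality $B_{X_{ex}^*} = \overline{W_{ex}}^{\|\cdot\|}$ and the observation that $x\cdot 1_{\N_{ev}}\in\widehat X_{hi}$.
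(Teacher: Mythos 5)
Your reduction to $\norm{x\cdot 1_{\N_{od}}}_{W_{ex}}>\e$ and the extraction of $\tilde f\in W_{ex}$ with $\tilde f(x\cdot 1_{\N_{od}})>\e-\delta$ are fine, but the step you yourself flag as the ``only real point requiring care'' is a genuine gap, and neither of your proposed fixes closes it. The set $\N_{od}$ is not an interval, and closure under restriction to intervals cannot be iterated to produce the restriction to $\N_{od}$: composing two interval restrictions yields the restriction to the intersection of the intervals, which is again an interval, so you can never ``peel off'' a single even coordinate from the middle of the support. Writing $\tilde f\cdot 1_{\N_{od}}$ as the sum of the restrictions $\tilde f\cdot 1_{\{2k-1\}}$ does not help either, because $W_{ex}$ is closed only under rational \emph{convex} combinations, not under sums. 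More decisively: if restriction to $\N_{od}$ (equivalently, to $\N_{ev}$) mapped $W_{ex}$ into $W_{ex}$, the coordinate projection of $X_{ex}$ onto $\overline{\langle e_{2n-1}:n\in\N\rangle}$ would be bounded and $X_{ex}$ would split as a direct sum of two infinite dimensional subspaces, contradicting the very HI property the paper is establishing; indeed the introduction notes that $\widehat{W}_{hi}=W_{ex}|_{\N_{ev}}$ cannot be a subset of $W_{ex}$ for exactly this reason. So no general closure property can give $\tilde f\cdot 1_{\N_{od}}\in W_{ex}$, and you offer no argument special to your particular $\tilde f$.

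The paper's proof circumvents this as follows. By Hahn--Banach it picks $g\in S_{X_{ex}^*}$ with $g(x)>\e$ and $g|_{\widehat{X}_{hi}}=0$, restricts it to the interval $E=\ran(x)$, and approximates the result in norm by some $x^*\in W_{ex}$ within $\delta_1$, with $\delta_1$ chosen small relative to $\delta$ and $\#E$. Because the approximated functional vanishes on every $e_{2n}$, each even coordinate of $x^*$ satisfies $\abs{x^*(e_{2n})}<\delta_1$, so the total mass $\sum_{2n\in E}\abs{x^*(e_{2n})}$ is less than $\delta$. One then kills these small even coordinates by forming the rational (sub)convex combination $f=(1-\delta)x^*+(1-\delta)\sum_{2n\in E}\abs{x^*(e_{2n})}\e_{2n}e^*_{2n}$ with $\e_{2n}=-\mathrm{sign}(x^*(e_{2n}))$, which lies in $W_{ex}$ by rational convexity, has $\supp(f)\subset\N_{od}$ by construction, and still satisfies $f(x)>\e-\delta$. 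The smallness of the correction terms is what makes the convexity trick available; this cancellation device is the idea your argument is missing.
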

\begin{proof}
Since  $\dist(x, \widehat{X}_{hi})>\e$ the   Hahn-Banach theorem yields that there exists $g\in
S_{X^{*}_{ex}}$   such that  $g(x)>\e$ and $g_{|\widehat{X}_{hi}}=0$.
Denoting by $E$ the range of $x$  we get that the functional
$F=Eg$ satisfies also that  $F(x)>\e$  and $F(e_{2n})=0$ for
every $n\in\N$. Using that $B_{X^{*}_{ex}}=\overline{W_{ex}}^{\norm{\,}}$  we
get  $x^{*}\in W_{ex}$ such that
\begin{equation}
  \label{eq:1}
\norm{F-x^{*}}<\delta_{1}\,\,\,\  \text{where  $\delta_{1}$
  satisfies}\,\,\,\delta_{1}<\frac{\delta(1-\e)}{(1-\delta)(1+\#E)}.  
\end{equation}
The basis of $X_{ex}$ is bimonotone hence
$\abs{x^{*}(e_{2n})}=\abs{x^{*}(e_{2n})-F(e_{2n})}<\delta_{1}$
and  also we may assume that $x^{*}=Ex^{*}$.
Assuming  by a small perturbation  if necessary, that  $\delta$ is rational we set
\begin{equation}
  \label{eq:15}
f=(1-\delta)x^{*}+(1-\delta)\sum_{2n\in
  E}\abs{x^{*}(e_{2n})}\e_{2n}e^{*}_{2n}
\end{equation}
where $\e_{2n}=-sign (x^{*}(e_{2n}))$.
Note that   $f(e_{2n})=0$ for every $n$.
Indeed  for  every  $n$ such that  $2n\notin E$ we have $x^{*}(e_{2n})=Ex^{*}(e_{2n})=0$
while  if $2n\in E$,
\[
x^{*}(e_{2n})+\abs{x^{*}(e_{2n})}\e_{n}e^{*}_{2n}(e_{2n})=
x^{*}(e_{2n})-x^{*}(e_{2n})=0.
\]

Also $f$ belongs to the rational
convex hull  of $W_{ex}$.
This follows from the fact that $x^{*}(e_{n})\in \Q$ for every $n\in\N$
and  that
\begin{equation}
  \label{eq:2}
\sum_{2n\in E}\abs{Ex^{*}(e_{2n})}\leq\sum_{2n\in E}\norm{Ex^{*}-F}\leq \# E\,\delta_{1}<\delta.  
\end{equation}
We conclude that  $f\in W_{ex}$ since  $W_{ex}$ is rationally convex.
To show that $f$ is the functional we are seeking it remains to show that $f(x)>\e-\delta$. Using \eqref{eq:2}  we
get
\begin{equation*}
  \begin{aligned} f(x)&=(1-\delta)x^{*}(x)+
    (1-\delta)\sum_{2n\in  E}\abs{x^{*}(e_{2n})}\e_{2n}e^{*}_{2n}(x)\\
   &\geq
   (1-\delta)F(x)-(1-\delta)\norm{F-x^{*}}\cdot \norm{x}-(1-\delta)\delta_{1}\# E\\
   &\geq
   (1-\delta)\e-(1-\delta)\cdot\delta_{1}(1+\# E)>\e-\delta.
\end{aligned}
\end{equation*}
\end{proof}
\begin{proposition}\label{hi-2}
  For  every closed infinite dimensional subspace $Y$ of $X_{ex}$ we have that $\dist(S_{Y},S_{\widehat{X}_{hi}})=0$.
\end{proposition}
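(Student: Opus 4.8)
The plan is to argue exactly as in the proof that $X_{hi}$ is HI, with one of the two block subspaces replaced by $\widehat{X}_{hi}$, the dependent sequences replaced by dependent paired sequences, and Lemma~\ref{dist1} used to keep the odd-indexed functionals supported in $\N_{od}$. A routine perturbation argument (passing to a normalized block sequence approximating a normalized sequence of $Y$, and using bimonotonicity of the basis to bound coefficients) reduces the statement to the case where $Y$ is a block subspace of $X_{ex}$; assume this. If $\inf\{\dist(x,\widehat{X}_{hi}):x\in S_Y\text{ a block vector}\}=0$ we are done directly: given $\e>0$, choose a normalized block $x\in Y$ and $w\in\widehat{X}_{hi}$ with $\norm{x-w}<\e$; then $\abs{\norm{w}-1}<\e$, so $w/\norm{w}\in S_{\widehat{X}_{hi}}$ and $\norm{x-w/\norm{w}}<2\e$. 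Hence we may assume there is $\e_0>0$ with $\dist(x,\widehat{X}_{hi})>\e_0$ for every normalized block $x\in Y$, so that, by Lemma~\ref{dist1}, every such $x$ is normed up to $\e_0/2$ by some functional of $W_{ex}$ supported in $\N_{od}$.

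Fix $j\in\N$. The plan is to construct, by induction on $k=1,\dots,n_{2j-1}$, a $(C,2j-1)$-dependent paired sequence $(x_i,f_i)_{i=1}^{2n_{2j-1}}$ with $C=C(\e_0)$ in which each $x_{2k-1}$ lies in $Y$. At step $k$ the target weight $m_{2j_k}$ is forced: for $k=1$ by the requirement $j_1\in\Omega_1$, $m_{2j_1}\ge n_{2j-1}^2$, and for $k>1$ by $m_{2j_k}=m_{\sigma(g_1,\dots,g_{k-1})}$. Using Lemma~\ref{l1averages} one picks $n_{2j_k}$ successive normalized $2$-$\ell_1$-averages in $Y$, spaced out so that the whole sequence remains rapidly increasing; each such average is a normalized block vector of $Y$, hence normed up to $\e_0/2$ by an $\N_{od}$-supported functional of $W_{ex}$ furnished by Lemma~\ref{dist1}, which after restriction to the range of the average and rescaling norms it exactly. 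Applying the $(\mc A_{n_{2j_k}},m_{2j_k}^{-1})$-operation to these functionals produces an $\N_{od}$-supported $f_{2k-1}\in W_{ex}$ with $w(f_{2k-1})=m_{2j_k}$ and a $(C,2j_k)$-standard exact pair $(x_{2k-1},f_{2k-1})$ with $x_{2k-1}\in Y$ and $\ran(f_{2k-1})=\ran(x_{2k-1})$ (the bound $\norm{x_{2k-1}}\le C$ coming from Corollary~\ref{eris}). Then $f_{2k-1}\in L_{i_k}^{2j_k}$ for some $i_k$, and one sets $g_k=\varrho_{i_k}^{2j_k}(f_{2k-1})\in K_{i_k}^{2j_k}$, $f_{2k}=\widehat g_k$, and $x_{2k}=\frac{m_{2j_k}}{n_{2j_k}}\sum_{i\in F_k}e_i$ with $F_k=\supp(\widehat g_k)\subset\N_{ev}$. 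By construction $(g_1,\dots,g_{n_{2j-1}})$ is a $(2j-1)$-special sequence of $W_{hi}$, so $(f_i)_{i=1}^{2n_{2j-1}}$ is a paired special sequence, $F=m_{2j-1}^{-1}\sum_{k=1}^{n_{2j-1}}(f_{2k-1}+f_{2k})\in W_{ex}$, and the sizes of the $\ell_1$-averages can be arranged (using \eqref{p1} together with property~\eqref{eq:16} of $\sigma$) so that $(x_{2k-1})_{k=1}^{n_{2j-1}}$ is a $(C,m_{2j-1}^{-2})$-RIS.

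Put $a=\frac{m_{2j-1}}{n_{2j-1}}\sum_{k=1}^{n_{2j-1}}x_{2k-1}\in Y$ and $b=\frac{m_{2j-1}}{n_{2j-1}}\sum_{k=1}^{n_{2j-1}}x_{2k}\in\widehat{X}_{hi}$. Then $F(a+b)=2$: the terms $f_{2l-1}(x_{2l-1})=f_{2l}(x_{2l})=1$ contribute $2$, while every other term vanishes — $f_{2l-1}(x_{2k})=0$ since $\supp(f_{2l-1})\subset\N_{od}$ and $\supp(x_{2k})\subset\N_{ev}$; $f_{2l-1}(x_{2k-1})=0$ and $f_{2l}(x_{2k})=0$ for $l\ne k$ by the block structure; $f_{2k}(x_{2k-1})=0$ because $\ran(x_{2k-1})=\ran(f_{2k-1})<\supp(\widehat g_k)=\supp(f_{2k})$ by \eqref{p1}; and $f_{2l}(x_{2k-1})=0$ for $l\ne k$ again by the block structure, using $\ran(x_{2k-1})=\ran(f_{2k-1})$. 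Hence $\norm{a+b}\ge 2$, while Corollary~\ref{x-y} gives $\norm{a-b}\le 12C\,m_{2j-1}^{-1}$ and Corollary~\ref{eris} together with Proposition~\ref{upperSEP2} gives $\norm{a},\norm{b}\le 2C$. From $\norm{a}+\norm{b}\ge\norm{a+b}\ge 2$ and $\abs{\norm{a}-\norm{b}}\le\norm{a-b}$ one gets $\norm{a},\norm{b}\ge 1-6C\,m_{2j-1}^{-1}\ge 1/2$ for $j$ large, whence $\norm{a/\norm{a}-b/\norm{b}}\le 2\norm{a-b}/\norm{a}\le 48C\,m_{2j-1}^{-1}$. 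Since $j$ was arbitrary, $\dist(S_Y,S_{\widehat{X}_{hi}})=0$.

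The main obstacle is the inductive construction of the dependent paired sequence in the second case: one must keep each $x_{2k-1}$ genuinely inside $Y$, force its norming functional $f_{2k-1}$ to be supported in $\N_{od}$ with the weight prescribed by $\sigma$, and match it through the coding function $\varrho_{i_k}^{2j_k}$ with the primitive even exact pair $(x_{2k},f_{2k})$ so that $(f_i)_i$ is a bona fide paired special sequence of $W_{ex}$, all while retaining enough RIS control for Corollaries~\ref{x-y} and~\ref{eris} to apply. Subsidiary care is needed at the outset to see that it suffices to treat block subspaces $Y$, and in checking that — thanks to the range equalities built into standard exact pairs and to \eqref{p1} — the functional $F$ norms $a+b$ exactly.
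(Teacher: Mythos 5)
Your proposal is correct and follows essentially the same route as the paper: reduce to block subspaces, use Lemma~\ref{dist1} to produce $\N_{od}$-supported functionals norming vectors of $Y$ (the paper rescales the vectors by constants $c_k\in[1,2/\e)$ rather than the functionals, which is the cleaner way to stay inside $W_{ex}$), assemble a dependent paired sequence with odd-indexed SEPs in $Y$ matched via $\varrho_{i_k}^{2j_k}$ to even-indexed primitive exact pairs in $\widehat{X}_{hi}$, and conclude from $F(a+b)=2$ together with Corollary~\ref{x-y}. No substantive differences.
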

\begin{proof}
  On the contrary  assume that
 $\dist(S_{Y},S_{\widehat{X}_{hi}})>\e$
for   an infinite dimensional subspace  $Y$.
    We may assume  that  $Y$  is a block subspace.

  We first show that  for every  $j, p\in\N$ there exists  a  $(6/\e, 2j)$- SEP
$(y,f)$ with $y\in Y$, $\min y>p$ and $\supp(f)\subset\N_{od}$.

Since  $\dist(S_{Y},S_{\widehat{X}_{hi}})>\e$ we may choose a
 RIS $(y_{k})_{k\in\N}$  of normalized $2-\ell_{1}^{n_{2j_{k}}}$-averages  in
 $Y$ such that $\dist(y_{k},S_{\widehat{X}_{hi}})>\e$ for every $k$.
  Form Lemma~\ref{dist1}  we get a sequence  of functionals $(f_{k})_{k}\subset W_{ex}$ such that
  \begin{equation}\label{specialpaired1}
   f_{k}(y_{k})>\frac{\e}{2},\quad\,\,\ran(f_{k})\subset \ran(y_{k})\,\,\,\text{and}\,\,\, f_{k|\widehat{X}_{hi}}=0.
  \end{equation}
  For every $k$ choose $c_{k}\in [1,2/\e)$ such that $f_{k}(c_{k}y_{k})=1$.  Then for every $j\in\N$ and $F_{j}\subset\N$ with $\# F_{j}=n_{2j}$ and $\min F_{j}> \max\{n_{2j},p\}$ it  follows that the pair $(y,f)$ where
  \begin{equation}
    \label{eq:12}
     y=\frac{m_{2j}}{n_{2j}}\sum_{k\in F_{j}}c_{k}y_{k}\quad\text{and}\,\,\, f=m_{2j}^{-1}\sum_{k\in F_{j}}f_{k}\quad
\end{equation}
   is a $(6/\e, 2j)$- SEP  (see  Definition~\ref{defSEP})
with $y\in Y$, $\min\supp(y)>p$ and $\supp(f)\subset \N_{od}$.

Next for every $j\in\N$ we defined
a $(6\e^{-1},2j-1)$-dependent paired sequence   $(x_{k},f_{k})_{k=1}^{2n_{2j-1}}$
as follows:

Choose  $j_{1}$ such that $n_{2j-1}^{2}<m_{2j_1}$. Let $(x_{1},f_{1})$ be  a
$(6\e^{-1},2j_{1})$-SEP as in \eqref{eq:12}. 
Since $f_{1}\in W_{ex}$ we have that 
$f_{1}\in V_{n_{1}}$ for some $n_{1}$. Using the coding function $\varrho_{n_{1}}^{2j_{1}}$
we get the functional
$g_{2}=\varrho^{2j_{1}}_{n_{1}}(f_{1})=m_{2j_{1}}^{-1}\sum_{k\in B_{1}}e_{k}^{*}\in K_{n_{1}}^{2j_{1}}$.  We set
\[f_{2}=\widehat{g_{2}}\quad \text{and} \quad  x_{2}=\frac{m_{2j_{1}}}{n_{2j_{1}}}\sum_{k\in B_{1}}e_{2k}.
\]
Let $2j_{2}=\sigma(g_{2})$  where $\sigma$ is the coding function  used to define the special sequences in the space $X_{hi}$.
Choose  $(x_{3},f_{3})$   a
$(6\e^{-1},2j_{2})$-SEP as in \eqref{eq:12} satisfying $\supp(x_{2})<\supp (x_{3})$.  This yields that also  $\supp (f_{2})<\supp(f_{3})$. 
Let $n_{3}$ be such that 
$f_{3}\in V_{n_{3}}$. Using the coding function $\varrho_{n_{3}}^{2j_{2}}$
we get the functional
$g_{4}=\varrho^{2j_{2}}_{n_{3}}(f_{3})=m_{2j_{2}}^{-1}\sum_{k\in B_{3}}e_{k}^{*}\in K_{n_{3}}^{2j_{2}}$.  We set
\[f_{4}=\widehat{g_{4}}\quad \text{and} \quad  x_{4}=\frac{m_{2j_{2}}}{n_{2j_{2}}}\sum_{k\in B_{3}}e_{2k}.
\]
It is clear now how we proceed to get
a $(6\e^{-1},2j-1)$-dependent paired sequence.

Set $y=\frac{m_{2j-1}}{n_{2j-1}}\sum_{k=1}^{n_{2j-1}}x_{2k-1}$ and
$x=\frac{m_{2j-1}}{n_{2j-1}}\sum_{k=1}^{n_{2j-1}}x_{2k}$. From the choice of the sequence we have that $y\in Y$ and  $x\in \widehat{X}_{hi}$.
Corollary~\ref{x-y} yields that
$\norm{y-x}<72\e^{-1}m_{2j-1}^{-1}$.
 Also since   $(x_{2k-1})_{k=1}^{n_{2j-1}}$ is a $(2,
 m_{2j-1}^{-2})$-RIS  it follows that  $1\leq\norm{x}\leq4$. This
 yields $\dist (S_{Y}, S_{\widehat{X}_{hi}})<2\cdot
 72C\e^{-1}m_{2j-1}^{-1}$.   It follows that
 \[
 \frac{\e}{2}< \frac{72}{\e \cdot m_{2j-1}}\Leftrightarrow m_{2j-1}\e^{2}<2\cdot 72 \]
a contradiction for large enouge $j$. Therefore
$\dist (S_{Y}, S_{\widehat{X}_{hi}})=0$ for every infinite dimensional
subspace $Y$  of $X_{ex}$.
\end{proof}
The above yields the following.
\begin{theorem}
  The  HI   space $X_{hi}$ is  isometric to an infinite codimensional
  subspace of  the  HI space
  $X_{ex}$.
 \end{theorem}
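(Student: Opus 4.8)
The plan is to deduce the statement directly from the three main results already established, so almost all the substance has been done. First I would note that $\widehat{X}_{hi}$ is itself an HI space: by Proposition~\ref{wev} it is isometric to $X_{hi}$, and $X_{hi}$ is HI by the corollary following Proposition~\ref{smallnorm}. Thus we have an HI subspace $\widehat{X}_{hi}$ sitting isometrically inside $X_{ex}$, and it remains to check that $X_{ex}$ itself is HI and that $\widehat{X}_{hi}$ has infinite codimension.

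For the HI property of $X_{ex}$ I would apply Proposition~\ref{hi-1} with $X=X_{ex}$ and $X_{0}=\widehat{X}_{hi}$. Its hypothesis is exactly that $\dist(S_{\widehat{X}_{hi}},S_{Z})=0$ for every closed infinite dimensional subspace $Z$ of $X_{ex}$, which is the content of Proposition~\ref{hi-2}. Hence $X_{ex}$ is HI. (Equivalently, the same distance estimate forces the quotient map $Q\colon X_{ex}\to X_{ex}/\widehat{X}_{hi}$ to be strictly singular, matching the criterion quoted from \cite{CFG}.)

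Finally I would verify that $\widehat{X}_{hi}$ is infinite codimensional. Since $(e_{n})_{n\in\N}$ is a bimonotone Schauder basis of $X_{ex}$ and $\widehat{X}_{hi}=\overline{\langle e_{2n}:n\in\N\rangle}$, the basis projection onto the odd coordinates shows that no nonzero finite linear combination of $(e_{2n-1})_{n}$ belongs to $\widehat{X}_{hi}$. Therefore the vectors $Q(e_{2n-1})$, $n\in\N$, are linearly independent in $X_{ex}/\widehat{X}_{hi}$, so this quotient is infinite dimensional. Together with the isometric identification $\widehat{X}_{hi}\cong X_{hi}$ from Proposition~\ref{wev}, this gives the theorem.

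The theorem is thus a short synthesis; the genuine difficulty resides entirely in Proposition~\ref{hi-2} and the apparatus behind it (the paired special functionals, their tree structure, and the variant of the basic inequality in Proposition~\ref{vbin}), while the infinite-codimensionality step is merely basis bookkeeping.
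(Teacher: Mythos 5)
Your proposal is correct and follows essentially the same route as the paper: isometry of $\widehat{X}_{hi}$ with $X_{hi}$ via Proposition~\ref{wev}, and the HI property of $X_{ex}$ via Propositions~\ref{hi-1} and \ref{hi-2}. Your explicit verification of infinite codimensionality (via the basis projection onto the odd coordinates) is a detail the paper leaves implicit, but it is the obvious intended argument.
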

 \begin{proof}
From Proposition~\ref{wev} we have that
$W_{ex|\N_{ev}}=\widehat{W}_{hi}$ and this shows that  the
   space $\widehat{X_{hi}}=[e_{2n}:n\in\N]$ is isometric to $X_{hi}$.
   Propositions~\ref{hi-1} and \ref{hi-2} yield that the space $X_{ex}$
   is HI and this completes the proof.
 \end{proof}
 \section{The  HI extension of a variant of $X_{hi}$.}
There are several  examples of HI spaces where the  norming set  W
does not
include convex combinations and this is necessary for the restricted
saturation method applied for the definition of $W$, see \cite{AMot},\cite{AMot2}.
Also the norming set in the Argyros-Deliyanni asymptotic $\ell_{1}$-HI
space is  defined by a norming set $W$ which does not include rational
convex combinations.
Since the  closedness  in rational convex combinations  is   essential to show that the  extended space  is HI, naturally this raises the question if the spaces defined
by a norming set  not closed  in  rational convex combinations admit an   HI extension. In this section we provide a positive answer to this problem for a variant  of the space $X_{hi}$ used before, which is defined by a norming set $W_{vhi}$ which is similar to  $W_{hi}$ but not closed in rational convex combinations.  More precisely  we consider the following norming set.
Let
  $\sigma: \Q_{s}\to \{2j: j\in\Omega_{2}\}$ be  a  coding function 
satisfying \eqref{eq:16}.
\begin{definition}
  The set  $W_{vhi}$  is the minimal subset of $c_{00}$  satisfying the following  conditions:
  \begin{enumerate}
  \item  $\{\pm e_{n}: n\in\N\}\subset W_{vhi}$.
  \item $W_{vhi}$ is symmetric and is closed under the restriction of its elements on intervals of $\N$.
  \item  $W_{vhi}$ is closed under the $(\mc{A}_{n_{2j}},m_{2j}^{-1})$-operations, i.e.,
     $m_{2j}^{-1}\sum_{i=1}^{d}f_{i}\in W_{vhi}$  for all $n_{2j}$-admissible sequences $(f_{i})_{i=1}^{d}$ of $W_{vhi}$.
  \item  $W_{vhi}$ is closed under the $(\mc{A}_{n_{2j-1}},m_{2j-1}^{-1})$-operations on special sequences, i.e.,
    $m_{2j-1}^{-1}\sum_{i=1}^{d}f_{i}\in W_{vhi}$  for all $n_{2j-1}$-admissible sequences $(f_{i})_{i=1}^{d}$  of $W_{vhi}$ such that  $w(f_{1})=m_{2j_{1}}>n_{2j-1}^{2}$ and $w(f_{i+1})=m_{\sigma(f_{1},\dots, f_{i})}$ for $1\leq i\leq d-1$.
  \end{enumerate}
\end{definition}
The difference between   $W_{vhi}$ and $W_{hi}$ is the  rational convexity.
It follows by the same arguments as in section~\ref{standardhi} that the space $X_{vhi}$
with the  set $W_{vhi}$ as its norming set is also HI.

We define  a variant $W_{vex}$ of the extended   norming set $W_{ex}$. The definition of $W_{vex}$ is very similar to the previous  one $W_{ex}$ and goes as follows:

Let $\widetilde{W}_{0}=\{\pm e_{n}^{*}:n\in\N\}$ and assume that $\widetilde{W}_{n-1}$  has been defined.
We set 
 \[
   \widetilde{U}_{n}=\cup_{j}\{m_{2j}^{-1}\sum_{i=1}^{k}f_{i}: (f_{i})_{i=1}^{k}\;\text{ is an }
    \mc{A}_{n_{2j}}\text{-admissible sequence of}\,\, \widetilde{W}_{n-1}\},
 \]
 and $ \widetilde{V}_{n}=( \widetilde{U}_{n}\setminus\cup_{j=0}^{\infty}\cup_{k=n+1}^{+\infty}K_{k}^{2j})\cup  \widetilde{W}_{n-1}$.

For $j\in\N$ we  define
\begin{align}
\widetilde{W}_{n}^{2j-1}&=\{\frac{1}{m_{2j-1}}\left(\sum_{i=1}^{l}(f_{i}+\widehat{g}_{i})+
    \sum_{i=l+1}^{m}\widehat{g}_{i}+\sum_{i=m+1}^{p}h_{i}\right):
 \label{eq3a}
  \\
 &
 \resizebox{0.92\hsize}{!}{$
  (f_{1},\widehat{g}_{1},\dots,f_{l},\widehat{g}_{l},
  \widehat{g}_{l+1},\dots\widehat{g}_{m},h_{m+1},\dots,h_{p})\,
\text{is a}\,\,
   (2j-1,  \widetilde{V}_{n},(\varrho^{2k}_{i})_{i=1}^{n}, k=0,1,\dots)$}
\notag
  \\
&\hspace{7cm}\text{special sequence}
\}.
                                                   \notag
\end{align}
where the definition  of a  $(2j-1,  \widetilde{V}_{n},(\varrho^{2k}_{i})_{i=1}^{n}, k=0,1,\dots)$ special sequence is identical to the corresponding definition   in $W_{ex}$.
We set
\begin{equation*}
  \begin{split}
    \widetilde{W}_{n}=
\cup_{j}\widetilde{W}_{n}^{2j-1}\cup \widetilde{V}_{n}\cup
\{\sum_{i=1}^{d}r_{i}f_{i}:& (r_{i})_{i=1}^{d}  \text{is a rational convex combination},\\
& f_{i}\in \cup_{j}\widetilde{W}_{n}^{2j-1}\cup \widetilde{V}_{n}\,\,
\,\text{and}\,\,\, \supp (\sum_{i=1}^{d}r_{i}f_{i})\subset\N_{od}
  \}.
  \end{split}
\end{equation*}
The norming set $W_{vex}$ is the set $W_{vex}=\cup_{n=0}^{+\infty}\widetilde{W}_{n}$.

 Lemmas~\ref{step1} and \ref{step2}  remain valid for the norming sets
 $W_{vhi}$ and $W_{vex}$ and the proofs are identical. These yield that  $X_{vhi}$ is isometric to the subspace  $\widehat{X}_{vhi}=[e_{2n}:n\in\N]$  of $X_{vex}$.
 We have to prove that  $X_{vex}$ is HI.
 To prove this  first we note that  the  variant of the basic inequality, Proposition~\ref{vbin} and 
Corollary~\ref{x-y} hold also for the space $X_{vex}$. Moreover  the following holds.
\begin{proposition}
 The space  $X_{vex}$ is reflexive  and $B_{X_{vex}^{*}}=\overline{\mathrm{conv}}^{\norm{\,} }(W_{vex})$ 
\end{proposition}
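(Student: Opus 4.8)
\emph{Strategy.} The plan is to mirror the proofs of Proposition~\ref{reflex1} and Corollary~\ref{dualball1}, the only genuinely new point being that $W_{vex}$ is not closed under rational convex combinations, so that $\overline{W_{vex}}^{\norm{\,}}$ need not be convex and the dual ball must be described via its closed convex hull. For reflexivity I would show, exactly as for $X_{hi}$ and $X_{ex}$, that the canonical basis $(e_n)_n$ of $X_{vex}$ is bimonotone and both boundedly complete and shrinking, and then quote James' theorem. (Equivalently, one checks that $W_{vex}$ is an EO-complete subset of $W_{mT}$ and applies Proposition~\ref{reflex1} verbatim.) Granted reflexivity, the description of the dual ball is then soft.

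\emph{Bounded completeness.} Suppose $\sup_N\norm{\sum_{k\le N}a_ke_k}<\infty$ while $\sum_ka_ke_k$ fails to converge. Using bimonotonicity of the basis one produces $0\le q_0<q_1<q_2<\cdots$ and $\delta>0$ with $\norm{u_i}\ge\delta$ for the contiguous blocks $u_i=\sum_{k=q_{i-1}+1}^{q_i}a_ke_k$. Choosing $f_i\in W_{vex}$ with $\supp f_i\subset(q_{i-1},q_i]$ and $f_i(u_i)\ge\delta/2$, the even operation $m_{2j}^{-1}\sum_{i=1}^{n_{2j}}f_i$ belongs to $W_{vex}$ and shows $n_{2j}\delta/(2m_{2j})\le\norm{\sum_{i\le n_{2j}}u_i}\le\sup_N\norm{\sum_{k\le N}a_ke_k}$; since $n_{2j}/m_{2j}\to\infty$ by the choice of $(m_j),(n_j)$ this is impossible for large $j$. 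Only closure of $W_{vex}$ under the even operations $(\mc{A}_{n_{2j}},m_{2j}^{-1})$ is used.

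\emph{Shrinking.} Arguing by contradiction, suppose there are a normalized block sequence $(x_k)_k$, a norm-one $f\in X_{vex}^*$ and $\delta>0$ with $f(x_k)\ge\delta$ for every $k$ (after passing to a subsequence and changing the sign). Applying Lemma~\ref{l1averages} inside $\overline{[x_k:k\in\N]}$ I obtain, for a fixed large even index $2j$, normalized $2$-$\ell_1^{n_{2j_m}}$-averages $w_1<\cdots<w_{n_{2j}}$ in that subspace with $j_m\uparrow$. The crucial observation is that each $w_m$ has the form $w_m=y_m/\norm{y_m}$ with $y_m=\sum_{k\in I_m}x_k$ for an interval $I_m$ of indices; since $\norm{y_m}\le\#I_m$ this forces $f(w_m)=f(y_m)/\norm{y_m}\ge\delta$. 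Thinning so that $(w_m)_m$ is a $(3,m_{2j}^{-2})$-RIS (Lemma~\ref{i<j}), the basic inequality — valid for $X_{vex}$ by the same proof as for $X_{ex}$, cf.\ Proposition~\ref{vbin} — together with Corollary~\ref{eris} gives $\norm{n_{2j}^{-1}\sum_{m\le n_{2j}}w_m}\le 6/m_{2j}$, whereas $f$ of this vector is $\ge\delta$; letting $j\to\infty$ is absurd. I expect this step, and in particular the observation that $\ell_1$-averages of a block sequence are normalized sums over index intervals (so a functional acting uniformly on $(x_k)_k$ also acts uniformly on them), to be the main obstacle; one must also confirm that the basic inequality and Corollary~\ref{eris} genuinely transfer to $X_{vex}$, which they do since the paired–special structure only modifies the odd operations.

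\emph{The dual ball.} Clearly $\overline{\mathrm{conv}}^{\norm{\,}}(W_{vex})\subset B_{X_{vex}^*}$, as $W_{vex}\subset B_{X_{vex}^*}$ and the latter is norm-closed and convex. Conversely, the convex set $C=\overline{\mathrm{conv}}^{\norm{\,}}(W_{vex})$ is symmetric and norms $X_{vex}$, by the definition of $\norm{\cdot}_{W_{vex}}$; being norm-closed and convex it is weakly closed by Mazur's theorem, hence weak$^*$-closed in $X_{vex}^*$ by reflexivity. If some $g\in B_{X_{vex}^*}$ lay outside $C$, weak$^*$ Hahn--Banach separation would give $x\in X_{vex}$ with $g(x)>\sup_{h\in C}h(x)=\norm{x}\ge g(x)$, a contradiction. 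Therefore $B_{X_{vex}^*}=\overline{\mathrm{conv}}^{\norm{\,}}(W_{vex})$.
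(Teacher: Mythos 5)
Your proof is correct and follows essentially the route the paper implicitly takes: the proposition is stated without proof because $W_{vex}$ is still an EO-complete set, so reflexivity follows as in Proposition~\ref{reflex1} (via the boundedly complete/shrinking argument of \cite{ATod} that you write out in full, including the correct observation that the James-type $\ell_1$-averages are normalized sums over index intervals), and the dual-ball identity is the standard Hahn--Banach/Mazur separation argument of Corollary~\ref{dualball1}, with $\overline{\mathrm{conv}}(W_{vex})$ replacing $\overline{W_{vex}}$ precisely because $W_{vex}$ is no longer rationally convex. No gaps.
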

To prove that $X_{vex}$ is  HI  we need  a slight modification in the proof
of Lemma~\ref{dist1} since in that  proof we use that $B_{X^{*}_{ex}}=\overline{W_{ex}}^{\norm{\cdot}}$ which does not hold for the space  $X_{vex}$.
\begin{lemma}\label{dist2}
 Let $\e>0$ and $x\in X_{vex}$ be a normalized block vector such that
 $\dist(x, \widehat{X}_{vhi})>\e$.  Then for every $\delta\in
 (0,\e)$ there exists $f\in W_{vex}$  such that
 \[
a)\, \supp(f)\subset \N_{od},\hspace{3cm} b) \, f(x)>\e-\delta.
 \]
\end{lemma}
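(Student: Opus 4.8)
The plan is to imitate the proof of Lemma~\ref{dist1} almost verbatim, the only structural change being forced by the loss of rational convexity of $W_{vex}$, so that its dual ball is only $B_{X^{*}_{vex}}=\overline{\mathrm{conv}}^{\norm{\,}}(W_{vex})$ by the reflexivity of $X_{vex}$ noted above. First I would apply the Hahn--Banach theorem: since $\dist(x,\widehat{X}_{vhi})>\e$ there is $g\in S_{X^{*}_{vex}}$ with $g(x)>\e$ and $g_{|\widehat{X}_{vhi}}=0$; setting $E=\ran(x)$ and $F=Eg$, one has $F\in c_{00}(\N)$, $\supp F\subseteq E$, $F(x)>\e$, and $F(e_{2n})=0$ for every $n$. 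In place of a single norming functional, the dual ball description now produces a rational convex combination $x^{*}=\sum_{i}r_{i}x^{*}_{i}$ with $x^{*}_{i}\in W_{vex}$ and $\norm{F-x^{*}}<\delta_{1}$, where $\delta_{1}$ is chosen exactly as in \eqref{eq:1}; since the basis of $X_{vex}$ is bimonotone and $W_{vex}$ is closed under restrictions to intervals of $\N$, we may assume $\supp x^{*}\subseteq E$, so that $\abs{x^{*}(e_{2n})}=\abs{(x^{*}-F)(e_{2n})}<\delta_{1}$ for every $n$ and $\sum_{2n\in E}\abs{x^{*}(e_{2n})}<(\#E)\delta_{1}<\delta$.

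Next I would take the correction functional exactly as in \eqref{eq:15}, namely $f=(1-\delta)x^{*}+(1-\delta)\sum_{2n\in E}\abs{x^{*}(e_{2n})}\e_{2n}e^{*}_{2n}$ with $\e_{2n}=-\mathrm{sign}(x^{*}(e_{2n}))$. The computation from the proof of Lemma~\ref{dist1} applies word for word: $f(e_{2n})=0$ for every $n$, which yields $\supp f\subset\N_{od}$, which is (a); and $f(x)\geq(1-\delta)F(x)-(1-\delta)\norm{F-x^{*}}\cdot\norm{x}-(1-\delta)\delta_{1}\#E\geq(1-\delta)\e-(1-\delta)\delta_{1}(1+\#E)>\e-\delta$ by the choice of $\delta_{1}$, which is (b).

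The one genuinely new point, and the step I expect to need the most care, is checking that $f\in W_{vex}$: in Lemma~\ref{dist1} this was immediate from the rational convexity of $W_{ex}$, a property that $W_{vex}$ deliberately lacks. To establish it, replace each $x^{*}_{i}$ by the rational convex combination of elements of $\bigcup_{j}\widetilde{W}_{n}^{2j-1}\cup\widetilde{V}_{n}$ witnessing its membership in $W_{vex}$, choosing $n$ large enough to serve all the $x^{*}_{i}$ simultaneously (the sets $\widetilde{W}_{n}$ increase with $n$); then $f$ is, after padding by the zero functional, a rational convex combination of elements of $\bigcup_{j}\widetilde{W}_{n}^{2j-1}\cup\widetilde{V}_{n}$ — the extra summands $\e_{2n}e^{*}_{2n}=\pm e^{*}_{2n}$ belonging to $\widetilde{V}_{n}$ — whose support, by (a), is contained in $\N_{od}$. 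This is precisely the configuration allowed by the last clause in the definition of $\widetilde{W}_{n}$, so $f\in\widetilde{W}_{n}\subseteq W_{vex}$. In effect, the asymmetry designed into $W_{vex}$ — rational convex combinations admitted exactly when their support lies in $\N_{od}$ — is what makes this correction functional legal; what remains, the bookkeeping of that collapse and the closure of $W_{vex}$ under restrictions to intervals, is routine and goes through just as for $W_{ex}$.
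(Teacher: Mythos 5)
Your proposal is correct and follows essentially the same route as the paper: Hahn--Banach plus the description $B_{X^{*}_{vex}}=\overline{\mathrm{conv}}^{\norm{\,}}(W_{vex})$ to get $x^{*}\in\conv_{\Q}(W_{vex})$, the same correcting functional as in \eqref{eq:15}, and then the observation that $f$ is a rational convex combination with $\supp(f)\subset\N_{od}$, which is exactly the clause built into the definition of $\widetilde{W}_{n}$. Your explicit unwinding of the $\widetilde{W}_{n}$ hierarchy to place $f$ in some $\widetilde{W}_{n}$ is just a more detailed version of the paper's one-line appeal to that closure property.
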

\begin{proof}
  Since  $\dist(x, \widehat{X}_{vhi})>\e$ there exists $f\in
S_{X^{*}_{vex}}$   such that  $g(x)>\e$ and $g_{|\widehat{X}_{vhi}}=0$.
Denoting by $E$ the range of $x$  we get that the functional
$F=Eg$ satisfies also that  $F(x)>\e$  and $F(e_{2n})=0$ for
every $n\in\N$. Using that $B_{X^{*}_{vex}}=\overline{\text{conv}}(W_{vex})^{\norm{\,}}$  we
get  $x^{*}\in \conv_{\Q}(W_{vex})$ such that
\begin{equation}
  \label{eq:1a}
\norm{F-x^{*}}<\delta_{1}\,\,\,\  \text{where  $\delta_{1}$
  satisfies}\,\,\,\delta_{1}<\frac{\delta(1-\e)}{(1-\delta)(1+\#E)}.  
\end{equation}
The basis of $X_{vex}$ is bimonotone hence
$\abs{x^{*}(e_{2n})}=\abs{x^{*}(e_{2n})-F(e_{2n})}<\delta_{1}$ and 
also we may assume that  $x^{*}$
is  supported by the set $E$, i.e.,  $x^{*}=Ex^{*}$.

Assuming by a small perturbation if necessary that  $\delta $ is rational
we set 
\begin{equation}
  \label{eq:15a}
f=(1-\delta)x^{*}+(1-\delta)\sum_{2n\in
  E}\abs{x^{*}(e_{2n})}\e_{2n}e^{*}_{2n}\,\,\text{where}\,\,\e_{2n}=-sign (x^{*}(e_{2n})).
\end{equation}
As in Lemma~\ref{dist1} we get that
that $f\in \conv_{\Q}(W_{vex})$ and  $\supp (f)\subset\N_{od}$.

Since   $W_{vex}$ is closed  in the rational convex combinations whose support is subset of $\N_{od}$ it follows that   $f\in W_{vex}$.
The rest of the proof is identical  with the  proof of Lemma~\ref{dist1}.
\end{proof}
Using the above  lemma  we get the following.
\begin{proposition}
The space $X_{vex}$ is HI.  
\end{proposition}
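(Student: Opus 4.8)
The plan is to mirror the proof that $X_{ex}$ is HI: apply the criterion of Proposition~\ref{hi-1} with $X_{0}=\widehat{X}_{vhi}$. Since $\widehat{X}_{vhi}=[e_{2n}:n\in\N]$ is isometric to the HI space $X_{vhi}$ (by the analogues of Lemmas~\ref{step1} and~\ref{step2}, which remain valid), it will suffice to prove the analogue of Proposition~\ref{hi-2}, namely that $\dist(S_{Y},S_{\widehat{X}_{vhi}})=0$ for every infinite dimensional subspace $Y$ of $X_{vex}$. As usual one first reduces to a block subspace $Y$.

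So I would assume toward a contradiction that $\dist(S_{Y},S_{\widehat{X}_{vhi}})>\e$ for some $\e>0$. The first step is to produce, for all $j,p\in\N$, a $(6/\e,2j)$-SEP $(y,f)$ with $y\in Y$, $\min\supp(y)>p$ and $\supp(f)\subset\N_{od}$. This is carried out exactly as in the proof of Proposition~\ref{hi-2}, the only change being that Lemma~\ref{dist2} replaces Lemma~\ref{dist1}: pick a RIS $(y_{k})_{k}$ of normalized $2$-$\ell_{1}^{n_{2j_{k}}}$-averages in $Y$ with $\dist(y_{k},S_{\widehat{X}_{vhi}})>\e$, apply Lemma~\ref{dist2} to each $y_{k}$ to obtain $f_{k}\in W_{vex}$ with $f_{k}(y_{k})>\e/2$, $\ran(f_{k})\subset\ran(y_{k})$ and $f_{k|\widehat{X}_{vhi}}=0$, renormalize by constants $c_{k}\in[1,2/\e)$, and set $y=\frac{m_{2j}}{n_{2j}}\sum_{k\in F_{j}}c_{k}y_{k}$, $f=m_{2j}^{-1}\sum_{k\in F_{j}}f_{k}$ for a sufficiently spread set $F_{j}$ with $\#F_{j}=n_{2j}$. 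The underlying $\ell_{1}$-averages, RIS and SEPs in block subspaces of $X_{vex}$ are obtained as in Section~\ref{eocomplete}, since those arguments do not rely on rational convexity of the norming set.

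Next, for each $j\in\N$ one builds a $(6\e^{-1},2j-1)$-dependent paired sequence $(x_{k},f_{k})_{k=1}^{2n_{2j-1}}$ precisely as in the proof of Proposition~\ref{hi-2}: odd-supported SEPs $(x_{2k-1},f_{2k-1})\subset Y$ alternate with their hatted images, where $g_{2k}=\varrho^{2j_{k}}_{n_{k}}(f_{2k-1})$, $f_{2k}=\widehat{g}_{2k}$, $x_{2k}$ is the matching hatted basis average, and the weights are chained through the coding function $\sigma$, so that $(f_{k})_{k=1}^{2n_{2j-1}}$ is a $(2j-1)$-special sequence of $W_{vex}$. Put $y=\frac{m_{2j-1}}{n_{2j-1}}\sum_{k=1}^{n_{2j-1}}x_{2k-1}\in Y$ and $x=\frac{m_{2j-1}}{n_{2j-1}}\sum_{k=1}^{n_{2j-1}}x_{2k}\in\widehat{X}_{vhi}$. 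Since Corollary~\ref{x-y} holds for $X_{vex}$, $\norm{y-x}<72\e^{-1}m_{2j-1}^{-1}$, while $1\le\norm{x}\le4$ because $(x_{2k})_{k}$ is a $(2,m_{2j-1}^{-2})$-RIS; hence $\dist(S_{Y},S_{\widehat{X}_{vhi}})$ is at most a fixed multiple of $\e^{-1}m_{2j-1}^{-1}$, contradicting $\dist(S_{Y},S_{\widehat{X}_{vhi}})>\e$ once $j$ is large. Thus $\dist(S_{Y},S_{\widehat{X}_{vhi}})=0$ for every infinite dimensional $Y$, and Proposition~\ref{hi-1} yields that $X_{vex}$ is HI.

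The one delicate point, the failure of rational convexity of $W_{vex}$, has already been isolated and resolved: the only place convexity was essential in the $X_{ex}$ argument is the step in Lemma~\ref{dist1} invoking $B_{X^{*}_{ex}}=\overline{W_{ex}}^{\norm{}}$, and Lemma~\ref{dist2} repairs exactly this, since the perturbed functional $f$ it produces is supported in $\N_{od}$, so the rational convex combination defining it stays inside $W_{vex}$ (which is closed under rational convex combinations with support contained in $\N_{od}$). With Lemma~\ref{dist2} in hand, I expect the remainder to be a verbatim transcription of the $X_{ex}$ proof: the variant basic inequality Proposition~\ref{vbin}, Corollary~\ref{x-y}, the existence of SEPs and of dependent paired sequences, and the tree structure of the semi-paired special sequences all transfer to $X_{vex}$, and the only genuine ``work'' is checking that each of these cited ingredients indeed holds there. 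No new obstacle beyond this bookkeeping is anticipated.
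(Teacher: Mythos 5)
Your proposal is correct and follows exactly the route the paper takes: the paper's own justification is that the proof is identical to that of Proposition~\ref{hi-2}, with Lemma~\ref{dist2} replacing Lemma~\ref{dist1} to handle the failure of rational convexity, and with the variant basic inequality and Corollary~\ref{x-y} noted to carry over to $X_{vex}$. Your more detailed walk-through of the dependent paired sequence construction and the final distance estimate matches the intended argument.
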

The proof of the proposition  is identical to the proof of  the Proposition~\ref{hi-2}.
\begin{remark}
  The extension method   applied   for the  HI spaces defined by the
  families $\mc{A}_{n}$ can also be applied with identical proofs
  for the Gowers-Maurey space \cite{GM}.
\end{remark}
\section{The case   of asymptotic  $\ell_{1}$-HI Banach spaces}
In this section we show that the extension  method can be applied also
to asymptotic  $\ell_{1}$-HI Banach spaces, see \cite{AD}. The method is
identical to the one for the spaces defined  by the families
$\mc{A}_{n_{j}}$ with some slight modifications due to
the usage of the Schreier families.

Let us recall the definition of the norming set of a standard
asymptotic $\ell_{1}$-HI space.

Let $(n_{j})_{j=0}^{\infty}, (m_{j})_{j=0}^{\infty}$ be two sequences of positive
integers
such that $m_{0}=2$,  $m_{j+1}=m_{j}^{m^{j}}$,  $n_{0}=1$  and
$n_{j+1}=m_{1}^{2m_{j+1}}n_{j}$. Let also $\sigma$ be a coding function defined
as in  Section~\ref{standardhi}.

The norming set  $W_{hi}$ of a standard asymptotic $\ell_{1}$-HI space
is the minimal subset of $c_{00}(\N)$  satisfying the following
properties:
\begin{enumerate}
\item[i)]  It is   rationally convex, symmetric, closed on the projections on
  intervals  of $\N$ and contains $\pm e_{n}^{*}$, $n\in\N$.
  \item[ii)]  It is closed  in $(\mc{S}_{n_{2j}},m_{2j}^{-1})$-operations, i.e., if $(f_{i})_{i\in F}$ is an $\mc{S}_{n_{2j}}$-admissible
 sequence of $W_{hi}$  then
 $m_{2j}^{-1}\sum_{i\in F}f_{i}\in W_{hi}$.
\item[iii)] It is  closed  in
  $(\mc{S}_{n_{2j-1}},m_{2j-1}^{-1})$-operations on special sequences, i.e., if $(f_{i})_{i\in F}$ is an $\mc{S}_{n_{2j}}$-admissible
 sequence of $W_{hi}$  $w(f_{i})=m_{2j_{i}}=\sigma(f_{1},\dots,f_{i-1})$
 and $m_{2j_{1}}\geq n_{2j-1}^{2}$
 then
 $m_{2j-1}^{-1}\sum_{i\in F}f_{i}\in W_{hi}$.
\end{enumerate}
To define the extended  HI space we shall use a variation of the
Schreier families $(\mc{S}_{n})_{n\in\N}$, (see \cite{ATolmemoir} for
some of their properties).
\begin{definition}
  We set $\mc{S}_{1}^{f}=\{\{1\}\}\cup \{F\subset\N: 2\#F\leq min F\}$
  and
  \[
    \mc{S}_{n}^{f}=\mc{S}_{1}^{f}[\mc{S}_{n-1}^{f}]=\{
\cup_{i=1}^{k}F_{i}:  F_{1}<\dots<F_{k}\in\mc{S}_{n-1}^{f}, \{\min F_{i}:i\leq
k\}\in\mc{S}_{1}^{f}\}.
  \]
\end{definition}
We also the define the families
\[
  \mc{S}_{n}^{f}\odot \mc{A}_{2}=\{\cup_{i=1}^{d}F_{i}:
F_{1}<\dots<F_{d},\, \# F_{i}\leq 2\,\,\text{and}\,\{\max F_{i}\}_{i=1}^{d}\in\mc{S}_{n}^{f}\}
.
\]
It is easy to see that  for every $n$  the families  $\mc{S}_{m}^{f},{S}_{n}^{f}\odot\mc{A}_{2}$ are
regular.

For a finite subset   $G=\{n_{1}<\dots<n_{d}\}$ of $\N$ we denote by $\widehat{G}$
the set $\widehat{G}=\{2n_{1}<\dots <\dots<2n_{d}\}$.  We also  denote by
$\widehat{\mc{S}_{n}}$ the family
$
\{\widehat{F}: F\in\mc{S}_{n}\}.
$
\begin{lemma}\label{lsa2}
For every  $n\in\N$ we have
\[\widehat{\mc{S}_{n}}= \mc{S}_{n}^{f}\cap[\N_{ev}]^{<\omega}.\]
\end{lemma}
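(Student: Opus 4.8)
The plan is to prove the identity by induction on $n$. The workhorse is the doubling map $G=\{n_{1}<\dots<n_{d}\}\mapsto\widehat{G}=\{2n_{1}<\dots<2n_{d}\}$, which is an order isomorphism of $(\N,<)$ onto $(\N_{ev},<)$; consequently it commutes with finite unions, it carries a block sequence $F_{1}<\dots<F_{k}$ to a block sequence $\widehat{F_{1}}<\dots<\widehat{F_{k}}$, and $\min\widehat{F_{i}}=2\min F_{i}$, so that $\{\min\widehat{F_{i}}:i\le k\}=\widehat{\{\min F_{i}:i\le k\}}$. I would record these facts first and use them freely. Throughout I write $\mc{S}_{n}=\mc{S}_{1}[\mc{S}_{n-1}]$ with $\mc{S}_{1}=\{F:\#F\le\min F\}$, the standard Schreier convention.

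For the base case $n=1$ I would argue as follows. If $F\in\mc{S}_{1}$, then $\#\widehat{F}=\#F$ and $\min\widehat{F}=2\min F$, hence $2\#\widehat{F}\le 2\min F=\min\widehat{F}$, so $\widehat{F}\in\mc{S}_{1}^{f}\cap[\N_{ev}]^{<\omega}$ (the empty set being trivial). Conversely, if $G\in\mc{S}_{1}^{f}\cap[\N_{ev}]^{<\omega}$ then, since $1$ is odd, the extra generator $\{1\}$ of $\mc{S}_{1}^{f}$ does not lie in $[\N_{ev}]^{<\omega}$, so either $G=\emptyset$ or $2\#G\le\min G$; writing $G=\widehat{F}$ this reads $2\#F\le 2\min F$, i.e. $\#F\le\min F$, so $F\in\mc{S}_{1}$ and $G\in\widehat{\mc{S}_{1}}$.

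For the inductive step, assuming $\widehat{\mc{S}_{n-1}}=\mc{S}_{n-1}^{f}\cap[\N_{ev}]^{<\omega}$, I would treat the two inclusions symmetrically. If $F\in\mc{S}_{n}=\mc{S}_{1}[\mc{S}_{n-1}]$, write $F=\bigcup_{i=1}^{k}F_{i}$ with $F_{1}<\dots<F_{k}\in\mc{S}_{n-1}$ and $\{\min F_{i}:i\le k\}\in\mc{S}_{1}$; applying the doubling map and the inductive hypothesis gives $\widehat{F}=\bigcup_{i=1}^{k}\widehat{F_{i}}$ with each $\widehat{F_{i}}\in\mc{S}_{n-1}^{f}$, $\widehat{F_{1}}<\dots<\widehat{F_{k}}$, and, by the base case, $\{\min\widehat{F_{i}}:i\le k\}=\widehat{\{\min F_{i}:i\le k\}}\in\mc{S}_{1}^{f}$, whence $\widehat{F}\in\mc{S}_{1}^{f}[\mc{S}_{n-1}^{f}]=\mc{S}_{n}^{f}$ and plainly $\widehat{F}\subset\N_{ev}$. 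Conversely, if $G\in\mc{S}_{n}^{f}\cap[\N_{ev}]^{<\omega}$, write $G=\bigcup_{i=1}^{k}G_{i}$ with $G_{1}<\dots<G_{k}\in\mc{S}_{n-1}^{f}$ and $\{\min G_{i}:i\le k\}\in\mc{S}_{1}^{f}$; here the key remark is that $G_{i}\subseteq G\subseteq\N_{ev}$, so the inductive hypothesis applies and yields $G_{i}=\widehat{F_{i}}$ with $F_{i}\in\mc{S}_{n-1}$, while $F_{1}<\dots<F_{k}$ since doubling is an order isomorphism. Then $\{\min G_{i}:i\le k\}=\widehat{\{\min F_{i}:i\le k\}}$ lies in $\mc{S}_{1}^{f}\cap[\N_{ev}]^{<\omega}$, so the base case gives $\{\min F_{i}:i\le k\}\in\mc{S}_{1}$; hence $F=\bigcup_{i=1}^{k}F_{i}\in\mc{S}_{1}[\mc{S}_{n-1}]=\mc{S}_{n}$ and $G=\widehat{F}\in\widehat{\mc{S}_{n}}$.

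There is no real obstacle here: the argument is a routine induction. The only points to watch are the bookkeeping in the base case — in particular that the auxiliary generator $\{1\}$ of $\mc{S}_{1}^{f}$ is invisible on $\N_{ev}$ because $1$ is odd, and that the inequality $2\#F\le\min F$ defining $\mc{S}_{1}^{f}$ corresponds exactly, through the doubling map, to the inequality $\#F\le\min F$ defining $\mc{S}_{1}$ — together with the observation in the backward direction of the inductive step that the pieces $G_{i}$ of a set $G\subseteq\N_{ev}$ are themselves subsets of $\N_{ev}$, which is precisely what makes the inductive hypothesis applicable.
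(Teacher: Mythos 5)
Your proof is correct and follows essentially the same route as the paper's: an induction on $n$ whose base case rests on the equivalence $2d\le 2n_{1}\Leftrightarrow d\le n_{1}$ under the doubling map, and whose inductive step transports the decomposition $F=\cup_{i}F_{i}$ with $\{\min F_{i}\}_{i}\in\mc{S}_{1}$ (resp.\ $\mc{S}_{1}^{f}$) back and forth. The only difference is presentational — you write out both inclusions and explicitly note that the generator $\{1\}$ of $\mc{S}_{1}^{f}$ is invisible on $\N_{ev}$, whereas the paper details one inclusion and declares the other similar.
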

\begin{proof}
First we show by induction  on $n$  that 
  \[
\mc{S}_{n}^{f}\cap[\N_{ev}]^{<\omega}\subset \widehat{\mc{S}_{n}}.
  \]
  Let    $F=\{2n_{1}<2n_{2}<\dots<2n_{d}\}\in\mc{S}_{1}^{f}$. Then
  $2d\leq 2n_{1}\Rightarrow d\leq n_{1}$.
  It follows that
  $ G=\{n_{1}<n_{2}<\dots<n_{d}\}\in\mc{S}_{1}$ and $F=\widehat{G}$.

  Assume that the result holds for every $k\leq n$ and
  $F\in \mc{S}_{n+1}^{f}\cap[\N_{ev}]^{<\omega}$.  Then
  $F=\cup_{i=1}^{d}F_{i}$  with $F_{i}\in\mc{S}_{n}^{f}$  and  $\{\min
  F_{i}:i\leq d\}\in\mc{S}_{1}^{f}$. By the inductive hypothesis  we have
  for every $i\leq d$ there exits  $G_{i}\in\mc{S}_{n}$ such that
  $F_{i}=\widehat{G}_{i}$.  From the case  $n=1$ we get that $\{\min
  G_{i}:i\leq d\}\in\mc{S}_{1}$ and therefore
  $G=\cup_{i=1}^{d}G_{i}\in\mc{S}_{n+1}$.  Also  $\widehat{G}=F$ and this
  completes  the  proof of the induction. 

  The  inclusion  $ \widehat{\mc{S}_{n}}\subset\mc{S}_{n}^{f}\cap[\N_{ev}]^{<\omega}$ is
  proved similarly.
\end{proof}
We give the definition of the admissibility with  respect  the
families  $\mc{S}_{n}$ and $\mc{S}_{n}^{f}\odot\mc{A}_{2}$. 
\begin{definition}\label{sa2}
  We  call a block sequence $(x_{i})_{i=1}^{d}$ in $c_{00}(\N)$
  \begin{enumerate}
  \item  $\mc{S}_{n}$-admissible if $\min\{\supp (x_{i}):i\leq
    d\}\in\mc{S}_{n}$.
      \item  $\mc{S}^{f}_{n}$-admissible if $\min\{\supp (x_{i}):i\leq  d\}\in\mc{S}^{f}_{n}$.
\item $\mc{S}^{f}_{n}\odot\mc{A}_{2}$-admissible   if $\{\min\supp
  (x_{i})_{i=1}^{d}\}=\cup_{i=1}^{m}F_{i}$, $F_{i}<F_{i+1}$ , $\#F_{i}\leq2$
for every $i$ and $\{\max F_{i}:i\leq m\}\in\mc{S}_{n}^{f}$.
    \end{enumerate}
  \end{definition}
Let us mention that admissibility with respect  the  families
$\mc{S}_{n},\mc{S}_{n}^{f}, n\in\N$ is the standard one, see \cite{ATod},
\cite{ATolmemoir}, while  the admissibility with respect  the  families
$\mc{S}_{n}^{f}\odot\mc{A},n\in\N$ is a variation of the standard one,
suitable for our  applications.
\begin{remark}\label{rsa2}
Let  $G=\{m_{1}<m_{2}<\dots<m_{d}\}\in\mc{S}_{n}$. 
For every  $0\leq j\leq d$ and every $k_{1}<2m_{1}<k_{2}<2m_{2}<\dots<
k_{j}<2m_{j}$
we have that the set
 $\{
 k_{1},2m_{1},\dots, k_{j},2m_{j},2m_{j+1},2m_{j+2},\dots, 2m_{d}\}$
 is   $\mc{S}_{n}^{f}\odot\mc{A}_{2}$-admissible.

 Indeed if $F_{i}=\{k_{i},2m_{i}\}$ for $i\leq j$ and $F_{i}=\{2m_{i}\}$
 for $i>j$,
 then Lemma~\ref{lsa2} yields that $\{\max F_{i}:i\leq d\}$ is
 $\mc{S}_{n}^{f}$-admissible and hence $\cup_{i=1}^{d}F_{i}$ is
 $\mc{S}_{n}^{f}\odot\mc{A}_{2}$-admissible (see Definition~\ref{sa2}).
 \end{remark}

 The norming set of the extended space will be an EO-complete subset of
 the norming set $W_{mT}$
 of the mixed Tsirelson space 
  $T[(\mc{S}_{n_{2j}}^{f},m_{2j}^{-1})_{j\in\N}, (\mc{S}_{n_{2j-1}}^{f}\odot\mc{A}_{2},
  m_{2j-1}^{-1})_{j\in\N}]$.

 In order to define the extended norming set
as
in Subsection~\ref{EOcomplete}
we consider  the sets
\[K^{2j}=\{m_{2j}^{-1}\sum_{i\in F}e^{*}_{i}: F\,\,\text{is maximal
    $\mc{S}_{n_{2j}}$-set}\}\]
and  a partition $(K_{i}^{2j})_{i}$   of them such that for every $i$,
$\sup\{\min(f):f\in
K_{i}^{2j}\}=+\infty$.  The sets $W_{mT}^{2j}$ 
and  the coding functions  $\varrho_{i}^{2j}:W_{mT}^{2j}\to K_{i}^{2j}$ are also
defined as in Subsection~\ref{EOcomplete}.

The definition of the paired special, semi paired special and special
sequences
are   the same  using
$\mc{S}_{n_{2j-1}}^{f}\odot\mc{A}_{2}$-admissibility instead of the  $\mc{A}_{n_{2j}}$-admissibility.
Let us give the definition of the 
$(2j-1,V,(\varrho_{k}^{2i})_{k=1}^{n}, i=0,1,\dots)$- paired special
sequence.
\begin{definition}\label{pairedspecials2}
  Let $j,n\in\N$ and $V\subset W_{mT}$.
We call 
$(2j-1,V,(\varrho_{k}^{2i})_{k=1}^{n}, i=0,1,\dots)$- paired special
sequence
any block sequence
$(f_{1},\widehat{g}_{1},\dots, f_{p},\widehat{g}_{p})$ satisfying
the following:
\begin{enumerate}
\item   it is $\mc{S}_{n_{2j-1}}^{f}\odot\mc{A}_{2}$-admissible  and for every $k$ there exists  $j_{k}\in\N$  such
  that  $f_{k}\in V\cap W_{mT}^{2j_{k}}$  and  $\widehat{g}_{k}=\widehat{\varrho^{{2j_{k}}}_{i_{k}}(f_{k})}$  for some $i_{k}\leq n$.
\item   $(g_{1},g_{2},\dots,g_{p})$ is a $(2j-1)$-special
  sequence of $W_{hi}$.
\end{enumerate}
\end{definition}
\begin{remark}\label{rsa2a}
  Let us  point out   that  the
  $\mc{S}_{n_{2j-1}}$-admissibility 
of $(g_{1},g_{2},\dots,g_{p})$ implies the 
$\mc{S}_{n_{2j-1}}^{f}\odot\mc{A}_{2}$-admissibility of the sequence
$(f_{1},\widehat{g}_{1},\dots, f_{p},\widehat{g}_{p})$ (see
Remark~\ref{rsa2}).
\end{remark}

The definition of the
$(2j-1,V,(\varrho_{k}^{2i})_{k=1}^{n}, i=0,1,\dots)$- paired special
sequences and
$(2j-1,V,(\varrho_{k}^{2i})_{k=1}^{n}, i=0,1,\dots)$- special
sequence are as in Subsection~\ref{EOcomplete}.

Next we prove that $W_{ex|\N_{ev}}=\widehat{W}_{hi}$ proving the
analogues of
Lemmas~\ref{step1},\ref{step2}.
\begin{lemma}\label{step1a}
 For every  $n\in\N$ and every $f\in W_{n}$  we have that  $f_{|\N_{ev}}\in \widehat{W}_{hi}$. 
\end{lemma}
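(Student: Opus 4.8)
The plan is to run the proof of Lemma~\ref{step1} verbatim, the only genuinely new ingredient being that the even operation is now an $(\mathcal{S}^f_{n_{2j}},m_{2j}^{-1})$-operation, so one must check that the contraction $\widehat{g}\mapsto g$ carries $\mathcal{S}^f_{n_{2j}}$-admissibility on $\N_{ev}$ to $\mathcal{S}_{n_{2j}}$-admissibility. I would induct on $n$. For $n=0$, $e^*_{k|\N_{ev}}$ is $0$ if $k$ is odd and $\widehat{e^*_m}\in\widehat{W}_{hi}$ if $k=2m$. For the inductive step, take $f\in W_{n+1}$; by the definition of $W_{n+1}$ (the Schreier analogue of that in Subsection~\ref{EOcomplete}) it is a rational convex combination of interval restrictions of functionals either in $V_{n+1}$ or of the form $m_{2j-1}^{-1}\bigl(\sum_{i=1}^l(f_i+\widehat{g}_i)+\sum_{i=l+1}^m\widehat{g}_i+\sum_{i=m+1}^p h_i\bigr)$ coming from a $(2j-1,V_{n+1},(\varrho^{2i}_k)_{k=1}^{n+1},i\ge 0)$-special sequence. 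Since the map $g\mapsto\widehat{g}$ is linear, $W_{hi}$ is closed under rational convex combinations and under restrictions to intervals of $\N$, and $(Ef)_{|\N_{ev}}=\widehat{E'\phi}$ whenever $f_{|\N_{ev}}=\widehat{\phi}$, with $E'=\{m:2m\in E\}$ again an interval, it suffices to treat these two classes of generators.

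For a generator in $V_{n+1}$ that is not already in $W_n$ (where the hypothesis applies), write $f=m_{2j}^{-1}\sum_{i\in F}f_i$ with $(f_i)_{i\in F}$ an $\mathcal{S}^f_{n_{2j}}$-admissible sequence of $W_n$. By induction $f_{i|\N_{ev}}=\widehat{\phi_i}$ with $\phi_i\in W_{hi}$; discarding the indices with $\phi_i=0$, the $\phi_i$ remain successive and, since $\supp\widehat{\phi_i}\subseteq\supp f_i$, the set $\{\minsupp\widehat{\phi_i}\}$ is a spread of a subset of $\{\minsupp f_i:i\in F\}\in\mathcal{S}^f_{n_{2j}}$. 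As $\mathcal{S}^f_{n_{2j}}$ is regular (in particular hereditary and spreading), this set lies in $\mathcal{S}^f_{n_{2j}}\cap[\N_{ev}]^{<\omega}=\widehat{\mathcal{S}_{n_{2j}}}$ by Lemma~\ref{lsa2}; since $\minsupp\widehat{\phi_i}=2\minsupp\phi_i$, the sequence $(\phi_i)$ is $\mathcal{S}_{n_{2j}}$-admissible. Closedness of $W_{hi}$ under the $(\mathcal{S}_{n_{2j}},m_{2j}^{-1})$-operations then gives $\phi=m_{2j}^{-1}\sum_i\phi_i\in W_{hi}$ and $f_{|\N_{ev}}=\widehat{\phi}$.

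For a generator coming from a $(2j-1)$-special sequence $(f_1,\widehat{g}_1,\dots,f_l,\widehat{g}_l,\widehat{g}_{l+1},\dots,\widehat{g}_m,h_{m+1},\dots,h_p)$, the vectors $f_i$ are supported in $\N_{od}$ and so vanish on $\N_{ev}$, while the definition of the special sequence provides $h_{i|\N_{ev}}=\widehat{\phi_i}$ with $(g_1,\dots,g_m,\phi_{m+1},\dots,\phi_p)$ a $(2j-1)$-special sequence of $W_{hi}$; closedness of $W_{hi}$ under the $(\mathcal{S}_{n_{2j-1}},m_{2j-1}^{-1})$-operations on special sequences yields $\phi=m_{2j-1}^{-1}\bigl(\sum_{i=1}^m g_i+\sum_{i=m+1}^p\phi_i\bigr)\in W_{hi}$ and $f_{|\N_{ev}}=\widehat{\phi}$, which closes the induction. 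The only step that is not a mechanical transcription of the proof of Lemma~\ref{step1} — hence the main obstacle — is the admissibility transfer in the even-operation case, where the decisive tool is precisely Lemma~\ref{lsa2} together with the regularity of the families $\mathcal{S}^f_m$; the special-sequence case is formal once one notes that the odd components $f_i$ contribute nothing to the restriction.
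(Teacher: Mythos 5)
Your proposal is correct and follows essentially the same route as the paper: induction on $n$, with the even-operation case handled by transferring $\mathcal{S}^f_{n_{2j}}$-admissibility to $\mathcal{S}_{n_{2j}}$-admissibility via Lemma~\ref{lsa2} (the paper invokes this in one line where you spell out the hereditary/spreading argument), and the special-sequence and rational-convex-combination cases handled formally as in Lemma~\ref{step1}. No gaps.
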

\begin{proof}
As  in Lemma~\ref{step1} we use induction. The case $n=0$ is identical.
 Assume that  the result holds for all  $k\leq n$  and  let $f\in W_{n+1}$.

If  $f$ is   the result of an  $(\mc{S}^{f}_{n_{2j}}, m_{2j}^{-1})$
  operation of elements  of $W_{n}$, i.e., $f=m_{2j}^{-1}\sum_{i=1}^{d}f_{i}$, from the  inductive 
  hypothesis we have that  for every $i$ there exists $\phi_{i}\in W_{hi}$ such
  that $f_{i|\N_{ev}}=\widehat{\phi_{i}}$.
 Lemma~\ref{lsa2}
  yields that $\{\minsupp(\phi_{i}):i\leq d\}$ is
  $\mc{S}_{n_{2j}}$-admissible. Since $W_{hi}$ is closed in
$(\mc{S}_{n_{2j}},m_{2j}^{-1})$-operations
we get that
  $f_{|\N_{ev}}=\widehat{m_{2j}^{-1}\sum_{i=1}^{d}\phi_{i}}\in \widehat{W}_{hi}$.

  The result  in the case
  where  $f$ is the  result of an
$(2j-1, V_{n},
(\varrho_{k}^{2i})_{k=1}^{n},i=0,1,\dots)$-special sequence 
is  analogue  to the previous one
using Remark~\ref{rsa2a}. The case  is  $f$ is  a rational convex
combination  follows readily.
\end{proof}
\begin{lemma}\label{step2a}
For every $f\in W_{hi}$   there exists  $\phi\in W_{ex}$ such that  $\phi_{|\N_{ev}}=\widehat{f}$.
\end{lemma}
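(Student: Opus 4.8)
The plan is to transcribe the proof of Lemma~\ref{step2}, replacing the $\mc{A}_n$-operations by the Schreier-type operations and invoking Lemma~\ref{lsa2} and Remark~\ref{rsa2} at every point where the admissibility of a lifted sequence has to be checked. Concretely, I would prove by induction on $n$ that for every $f\in W_{hi}^{n}$ there is $\phi\in W_{ex}$ (lying in a high enough inductive layer $\widetilde W_{n_{0}}$, $n_{0}=n_{0}(f)\geq n$) with $\phi_{|\N_{ev}}=\widehat f$. The base case $f=\pm e_{k}^{*}$ is settled by $\phi=\pm e_{2k}^{*}$, and the inductive step splits into the three familiar cases according to whether $f$ was produced by an even operation, by a special sequence, or as a rational convex combination.

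In the even-operation case $f=m_{2j}^{-1}\sum_{i=1}^{d}g_{i}$ with $(g_{i})_{i=1}^{d}$ an $\mc{S}_{n_{2j}}$-admissible sequence of $W_{hi}$, the inductive hypothesis gives $\phi_{i}\in W_{ex}$ with $\phi_{i|\N_{ev}}=\widehat{g_{i}}$; since $W_{ex}$ is closed under restrictions to intervals I may assume $\ran(\phi_{i})=\ran(\widehat{g_{i}})$, so that the $\phi_{i}$ are successive and $\min\supp(\phi_{i})=2\min\supp(g_{i})$. Then $\{\min\supp(\phi_{i}):i\leq d\}=\widehat{\{\min\supp(g_{i}):i\leq d\}}\in\widehat{\mc{S}_{n_{2j}}}\subset\mc{S}_{n_{2j}}^{f}$ by Lemma~\ref{lsa2}, so $(\phi_{i})_{i=1}^{d}$ is $\mc{S}_{n_{2j}}^{f}$-admissible and $\phi=m_{2j}^{-1}\sum_{i=1}^{d}\phi_{i}\in W_{ex}$ works. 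The rational convex combination case is immediate from closedness of $W_{ex}$ under rational convex combinations.

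The substantive case is the special-sequence one, $f=m_{2j-1}^{-1}\sum_{i=1}^{p}g_{i}$ with $(g_{1},\dots,g_{p})$ a $(2j-1)$-special sequence of $W_{hi}$. Here I would first record the exact analogue of Lemma~\ref{wh1specials} (same proof, with $\mc{S}^{f}_{n_{2j-1}}\odot\mc{A}_{2}$-admissibility in place of $\mc{A}_{n_{2j-1}}$-admissibility, the required admissibility being supplied by Remark~\ref{rsa2}), and then repeat the argument of Lemma~\ref{step2}: let $m\leq p$ be maximal with $g_{k}\in K_{i_{k}}^{2j_{k}}$ for all $k\leq m$; if $m\geq 1$, the analogue of Lemma~\ref{wh1specials} produces $l\leq m$, functionals $f_{1},\dots,f_{l}\in W_{ex}$ and $n_{0}$ making $(f_{1},\widehat g_{1},\dots,f_{l},\widehat g_{l},\widehat g_{l+1},\dots,\widehat g_{m})$ a semi paired special sequence (if $m=0$ use the analogue of Remark~\ref{remarksonspecial}(c)); for $i>m$ the inductive hypothesis gives $h_{i}\in W_{ex}$ with $h_{i|\N_{ev}}=\widehat{g_{i}}$, again restricted so that $\ran(h_{i})=\ran(\widehat{g_{i}})$, and maximality of $m$ forces $h_{m+1|\N_{ev}}=\widehat g_{m+1}\notin\bigcup_{k}\widehat K_{k}^{2j_{m+1}}$. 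Then $(f_{1},\widehat g_{1},\dots,f_{l},\widehat g_{l},\widehat g_{l+1},\dots,\widehat g_{m},h_{m+1},\dots,h_{p})$ is a $(2j-1,\widetilde V_{\bar n_{0}},(\varrho_{k}^{2i})_{k=1}^{\bar n_{0}},i=0,1,\dots)$-special sequence for a suitable $\bar n_{0}\geq n$ (its $\mc{S}^{f}_{n_{2j-1}}\odot\mc{A}_{2}$-admissibility coming from Remark~\ref{rsa2a}), so $\phi=m_{2j-1}^{-1}\bigl(\sum_{i=1}^{l}(f_{i}+\widehat g_{i})+\sum_{i=l+1}^{m}\widehat g_{i}+\sum_{i=m+1}^{p}h_{i}\bigr)\in W_{ex}$ satisfies $\phi_{|\N_{ev}}=\widehat g$.

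I expect the only genuinely new point — hence the main thing to get right — to be the bookkeeping with the admissibility families: verifying that an $\mc{S}_{n_{2j}}$-admissible (resp. $\mc{S}_{n_{2j-1}}$-admissible) sequence in $W_{hi}$ lifts to an $\mc{S}_{n_{2j}}^{f}$-admissible (resp. $\mc{S}^{f}_{n_{2j-1}}\odot\mc{A}_{2}$-admissible) sequence in $W_{ex}$, where the lifted functionals carry extra odd-coordinate mass coming from the $f_{k}$'s of the paired special sequences. This is exactly what Lemma~\ref{lsa2} and Remark~\ref{rsa2} (and its consequence Remark~\ref{rsa2a}) are tailored for, so once those are in place the remainder of the argument is a line-by-line copy of the proof of Lemma~\ref{step2}.
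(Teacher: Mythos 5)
Your proposal is correct and follows essentially the same route as the paper: induction on the layers of $W_{hi}$, with the even-operation case handled by the inclusion $\widehat{\mc{S}_{n_{2j}}}\subset\mc{S}^{f}_{n_{2j}}$ from Lemma~\ref{lsa2}, and the special-sequence case handled by the Schreier analogue of Lemma~\ref{wh1specials} (which the paper states as a Claim inside its proof) together with the $\mc{S}^{f}_{n_{2j-1}}\odot\mc{A}_{2}$-admissibility supplied by Remarks~\ref{rsa2} and~\ref{rsa2a}. Nothing further is needed.
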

\begin{proof}
  Similar to the  proof of Lemma~\ref{step2}  we proceed  by
  induction.

  For $n=1$ and  
  $f=m_{2j}^{-1}\sum_{i\in G}\pm e_{i}^{*}$, $G\in\mc{S}_{n_{2j}}$
  we observe  from Lemma~\ref{lsa2} that $\widehat{G}\in\mc{S}_{n}^{f}$.
  Using this observation, as in Lemma~\ref{step2} we conclude that
  $\widehat{f}\in W_{ex}$

We pass now to the case where
 $f=m_{2j-1}^{-1}\sum_{i=1}^{p}g_{i}$  and  $(g_{1},\dots,g_{p})$ is a
$(2j-1)$-special sequence  of $W_{hi}^{1}$.
As in
Lemma~\ref{step2} the proof is based on the following claim.

\begin{claim} 
  Let  $g_{i}\in K_{n_{i}}^{2j_{i}}$, $i\leq m$  be such that
 $(g_{1},\dots,g_{m})$ is  a $(2j-1)$-special sequence of
  $W_{hi}^{1}$.  Then there exist $0\leq l\leq m$ and $f_{1},\dots,f_{l}\in
  W_{ex}$ such that $(f_{1},\widehat{g}_{1},\dots,f_{l},\widehat{g}_{l},\widehat{g}_{l+1},\dots,\widehat{g}_{m})$
  is a $(2j-1, V_{n}, (\varrho_{k}^{n})_{k=1}^{n},i=0,\dots)$ semi-paired
  special sequence where  $n=\max\{n_{i}:i\leq m\}$.
\end{claim}
\textit{Sketch of the proof}.  Let $l\le m$  be maximal with the properties
\begin{enumerate}
\item $g_{i}\in \varrho_{n_{i}}^{2j_{i}}(V_{n}\cap W_{mT}^{2j_{i}})$ for every $i\leq l$.
\item
$F=((\varrho^{2j_{i}}_{n_{i}})^{-1}(g_{i}), \widehat{g}_{i})_{i=1}^{l}$  is a  $(2j-1, V_{n},
(\varrho_{k}^{2i})_{k=1}^{n},i=0,1\dots)$ -paired special sequence, where
$n=\max\{n_{i}:i\leq m\}$.
\end{enumerate}
For every $i$, as in the first case we have $\widehat{g}_{i}\in W_{ex}$,   and from
Remark~\ref{rsa2}    we get 
$(
\varrho^{2j_{1}}_{n_{i}})^{-1}(g_{1}), \widehat{g}_{1},\dots,
  (\varrho^{2j_{l}}_{n_{i}})^{-1}(g_{i}), \widehat{g}_{l},
  \widehat{g}_{l+1},\dots,\widehat{g}_{m}
  )$
is $\mc{S}_{n_{2j-1}}^{f}\odot\mc{A}_{2}$-admissible in $W_{ex}$.
Considering the cases $l=0,l=m$ and $0<l<m$,   as in the proof
of   Lemma~\ref{step2} we get that
$(\varrho^{2j_{1}}_{n_{i}})^{-1}(g_{1}), \widehat{g}_{1},\dots,
  (\varrho^{2j_{l}}_{n_{i}})^{-1}(g_{i}), \widehat{g}_{l},
  \widehat{g}_{l+1},\dots,\widehat{g}_{m}
  )$
is a $(2j-1,
V_{n},(\varrho_{k}^{2i})_{k=1}^{n},i=0,1,\dots)$-semi paired special
sequence.
$\square$

Returning back to the proof, let   $f=m_{2j-1}^{-1}\sum_{i=1}^{p}g_{i}$ where  $(g_{1},\dots,g_{p})$ is a
$(2j-1)$-special sequence  of $W_{hi}^{1}$,  and  $m\leq p$ be maximal with the property $g_{i}\in K_{2n_{i}}^{2j_{i}}$ for
every $i\leq m$.
It follows from the Claim and the definition of the special sequences that
$((\varrho^{2j_{1}}_{n_{1}})^{-1}(g_{1}), \widehat{g}_{1},\dots,
  (\varrho^{2j_{l}}_{n_{m}})^{-1}(g_{m}), \widehat{g}_{m},
  \widehat{g}_{m+1},\dots,\widehat{g}_{p}
  )$
  is a $(2j-1,
V_{n},(\varrho_{k}^{2i})_{k=1}^{n},i=0,1,\dots)$-special
sequence. Hence
$\phi=m_{2j-1}^{-1}(\sum_{i=1}^{m}
((\varrho^{2j_{l}}_{n_{i}})^{-1}(g_{i})+\widehat{g}_{i})+\sum_{m+1}^{p}\widehat{g}_{i})\in W_{ex}$
and $\widehat{f}=\phi_{|\N_{ev}}$.

Assume that we have proved  the result for every $k\leq n$ and let $f\in
W_{hi}^{n+1}$.

In the case  $f$  is the  result of an
$(\mc{S}_{n_{2j}},m_{2j}^{-1})$-operation  of  elements of $W_{hi}^{n}$, using Lemma~\ref{lsa2}
and the property $W_{ex}$ is closed in the
$(\mc{S}^{f}_{n_{2j}},m_{2j}^{-1})$-operations, we  show,  as in Lemma~\ref{step2}, that $\widehat{f}=\phi_{|\N_{ev}}$ for some $\phi\in W_{ex}$.

In the case   $f=m_{2j-1}^{-1}\sum_{i=1}^{p}g_{i}$ where 
$(g_{1},\dots,g_{p})$ is a $(2j-1)$-special sequence of $W_{hi}^{n+1}$,
is  proved  as in Lemma~\ref{step2}  using the    above Claim.
 \end{proof}
As a corollary we get  the following.
\begin{proposition}\label{wev2}
For the norming sets   $W_{hi}$ and $W_{ex}$ of the spaces $X_{hi}$ and
$X_{ex}$  we have that
\[ W_{ex|\N_{ev}}=\widehat{W}_{hi}.\]
As a consequence the space  $X_{hi}$  is isometric to the subspace of
$X_{ex}$ generated by the sequence $(e_{2n})_{n\in\N}$.
\end{proposition}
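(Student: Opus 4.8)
The plan is to obtain Proposition~\ref{wev2} directly from the two preceding lemmas, in exactly the way Proposition~\ref{wev} was deduced from Lemmas~\ref{step1} and \ref{step2} in the $\mc{A}_{n_j}$ case. First, Lemma~\ref{step1a} yields $W_{n|\N_{ev}}\subseteq\widehat{W}_{hi}$ for every $n\in\N$, and since $W_{ex}=\cup_{n=0}^{\infty}W_{n}$ this gives the inclusion $W_{ex|\N_{ev}}\subseteq\widehat{W}_{hi}$. Conversely, Lemma~\ref{step2a} produces for each $f\in W_{hi}$ a functional $\phi\in W_{ex}$ with $\phi_{|\N_{ev}}=\widehat{f}$, hence $\widehat{W}_{hi}\subseteq W_{ex|\N_{ev}}$. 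Combining the two inclusions gives the equality $W_{ex|\N_{ev}}=\widehat{W}_{hi}$.

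For the isometry statement I would look at the linear map $J: c_{00}(\N)\to c_{00}(\N)$ given by $J(\sum_{n}a_{n}e_{n})=\sum_{n}a_{n}e_{2n}$, whose range spans densely the subspace $[e_{2n}:n\in\N]$ of $X_{ex}$. For $x=\sum_{n}a_{n}e_{n}$ and any $f\in W_{ex}$ one has $f(Jx)=(f_{|\N_{ev}})(Jx)$, and writing $f_{|\N_{ev}}=\widehat{g}$ with $g\in W_{hi}$ we get $\widehat{g}(Jx)=\sum_{n}g(n)a_{n}=g(x)$; by the equality just established, every $g\in W_{hi}$ arises in this way from some $f\in W_{ex}$. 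Taking suprema over the two norming sets therefore gives $\norm{Jx}_{W_{ex}}=\norm{x}_{W_{hi}}$ for every $x\in c_{00}(\N)$. Since $X_{hi}$ is the completion of $(c_{00}(\N),\norm{\cdot}_{W_{hi}})$ and $J$ is an isometry, it extends to an isometric embedding of $X_{hi}$ into $X_{ex}$; its image is complete, hence closed, and contains the dense subspace $c_{00}(\N_{ev})$ of $[e_{2n}:n\in\N]$, so the image is exactly $[e_{2n}:n\in\N]$.

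The proof of this proposition is pure bookkeeping once the two lemmas are available; the genuine work lies in Lemmas~\ref{step1a} and \ref{step2a} themselves, and specifically in the Claim inside the proof of Lemma~\ref{step2a}, where one must check that the $\mc{S}_{n_{2j-1}}$-admissibility of a $(2j-1)$-special sequence $(g_1,\dots,g_m)$ of $W_{hi}$ translates, via Lemma~\ref{lsa2} and Remark~\ref{rsa2}, into the $\mc{S}^{f}_{n_{2j-1}}\odot\mc{A}_{2}$-admissibility required to form the corresponding semi-paired special sequence in $W_{ex}$. For the proposition itself I do not anticipate any further obstacle beyond keeping track of the normalization $\widehat{g}(e_{2n})=g(n)$ used in the computation of the norms.
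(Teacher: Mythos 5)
Your proposal is correct and follows exactly the route the paper takes: the equality $W_{ex|\N_{ev}}=\widehat{W}_{hi}$ is obtained by combining the two inclusions from Lemmas~\ref{step1a} and \ref{step2a}, and the isometry of $X_{hi}$ onto $[e_{2n}:n\in\N]$ then follows from the standard norming-set computation $\norm{Jx}_{W_{ex}}=\norm{x}_{W_{hi}}$. The paper states the proposition as an immediate corollary of the two lemmas without further detail, so your write-up simply makes explicit the same bookkeeping.
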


The results of the Sections ~\ref{standardhi},\ref{swex},\ref{xexishi}
with the obvious modifications imposed by the usage of the Schreier
families in place of the  $\mc{A}_{n_{j}}$ families, hold  also for the new
spaces with similar proofs (see \cite{ATolmemoir} for a detailed
exposition of the related results).

The above  yield    the following.
\begin{theorem}
 A standard asymptotic $\ell_{1}$  HI Banach space is isometric to an
 infinite codimensional subspace of a reflexive  asymptotic   $\ell_{1}$
 HI Banach space.
\end{theorem}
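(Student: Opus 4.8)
The plan is to carry out, in the Schreier setting, the same four-step scheme used for $(X_{hi},X_{ex})$ in Sections~\ref{standardhi}--\ref{xexishi}. First I would define the extended norming set $W_{ex}$ as the inductive union $\bigcup_n W_n$: starting from $W_0=\{\pm e_n^*\}$, at stage $n$ one closes the previously built functionals under the even operations $(\mc{S}_{n_{2j}}^f,m_{2j}^{-1})$, forms $V_n$ by discarding the wrong pieces of the sets $K_k^{2j}$, chooses the coding functions $\varrho_n^{2i}\colon L_n^{2i}\to K_n^{2i}$, and adjoins the paired special functionals $m_{2j-1}^{-1}\big(\sum_{i\le l}(f_i+\widehat g_i)+\sum_{l<i\le m}\widehat g_i+\sum_{m<i\le p}h_i\big)$ produced by the $(2j-1,V_n,(\varrho_k^{2i})_k,i\ge 0)$-special sequences of Definition~\ref{pairedspecials2} and the ensuing definitions, the only change throughout being that $\mc{S}_{n_{2j}}^f$ plays the role of $\mc{S}_{n_{2j}}$ in the even operations and $\mc{S}_{n_{2j-1}}^f\odot\mc{A}_2$ plays the role that $\mc{A}_{2n_{2j-1}}$ had for $W_{ex}$ (leaving room for a paired functional) in the odd ones. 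Since the resulting $W_{ex}$ is an EO-complete subset of the norming set of $T[(\mc{S}_{n_{2j}}^f,m_{2j}^{-1})_j,(\mc{S}_{n_{2j-1}}^f\odot\mc{A}_2,m_{2j-1}^{-1})_j]$, the general results of Section~\ref{eocomplete} give at once that $X_{ex}$ is reflexive with $B_{X_{ex}^*}=\overline{W_{ex}}^{\,\|\cdot\|}$; it is asymptotic $\ell_1$ for the same reason the standard spaces are, the even operations alone forcing the lower $\ell_1$-estimate on normalized block sequences of admissible length (here the use of the fast-growing families $\mc{S}_n^f$ is precisely what makes this compatible with passing to the even coordinates).

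Second, Proposition~\ref{wev2} — proved above via the analogues Lemma~\ref{step1a} and Lemma~\ref{step2a} of the two restriction lemmas — gives $W_{ex}|_{\N_{ev}}=\widehat W_{hi}$, so the isometry $e_n^*\mapsto e_{2n}^*$ identifies the $W_{hi}$-norm on $c_{00}(\N)$ with the norm $X_{ex}$ induces on $\widehat X_{hi}:=[e_{2n}:n\in\N]$. Hence $\widehat X_{hi}$ is isometric to the given asymptotic $\ell_1$ HI space, and in particular it is itself HI. It then remains only to verify the hypothesis of the HI criterion Proposition~\ref{hi-1} with $X_0=\widehat X_{hi}$, i.e.\ that $\dist(S_Y,S_{\widehat X_{hi}})=0$ for every infinite-dimensional (equivalently, block) subspace $Y$ of $X_{ex}$; once this is done, Proposition~\ref{hi-1} yields that $X_{ex}$ is HI, and since $X_{ex}/\widehat X_{hi}$ is infinite-dimensional (it is generated by the cosets of the odd basis vectors) the theorem follows.

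Third, for the distance estimate I would replay the proof of Proposition~\ref{hi-2}. One first needs the Hahn--Banach lemma in the form of Lemma~\ref{dist1} (using $B_{X_{ex}^*}=\overline{W_{ex}}^{\,\|\cdot\|}$, bimonotonicity of the basis, and rational convexity of $W_{ex}$ to kill the even coordinates of the norming functional): any normalized block vector $x$ with $\dist(x,\widehat X_{hi})>\e$ admits, for every $\delta\in(0,\e)$, some $f\in W_{ex}$ with $\supp f\subset\N_{od}$ and $f(x)>\e-\delta$. Assuming towards a contradiction that $\dist(S_Y,S_{\widehat X_{hi}})>\e$ for a block subspace $Y$, one builds in $Y$ a $(6/\e,2j)$-standard exact pair $(y,f)$ with $\supp f\subset\N_{od}$ and $\min\supp y$ as large as desired, by averaging normalized $\ell_1^{n_{2j}}$-averages $y_k\in Y$ for which the functionals $f_k$ supplied by the Hahn--Banach lemma satisfy $f_k(y_k)>\e/2$, $\ran f_k\subset\ran y_k$, and $f_k|_{\widehat X_{hi}}=0$; feeding each such $f_k$ through the coding functions $\varrho$ and the coding $\sigma$ of $W_{hi}$ produces, for every $j$, a $(6\e^{-1},2j-1)$-dependent paired sequence $(x_k,f_k)_{k=1}^{2n_{2j-1}}$ with $x_{2k-1}\in Y$ and $x_{2k}\in\widehat X_{hi}$. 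The variant of the basic inequality (the exact analogue of Proposition~\ref{vbin}: the only new point is that a weighted functional with weight in $[m_{2j_k},m_{2j_{k+1}})$ may now norm both $x_{2k-1}$ and $x_{2k}$ rather than a single vector) together with the auxiliary-space estimate Lemma~\ref{basisaverages3} then yields, as in Corollary~\ref{x-y}, that $\big\|\tfrac{m_{2j-1}}{n_{2j-1}}\sum_{k=1}^{n_{2j-1}}(x_{2k-1}-x_{2k})\big\|\le 12C m_{2j-1}^{-1}$ with $C=6\e^{-1}$, whereas $\big\|\tfrac{m_{2j-1}}{n_{2j-1}}\sum_k x_{2k}\big\|\in[1,4]$; letting $j\to\infty$ forces $\dist(S_Y,S_{\widehat X_{hi}})=0$, the desired contradiction.

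The hard part will be the one point at which the Schreier setting is not a purely formal transcription of the $\mc{A}_{n_j}$ one: one must check that inserting the hatted functional $\widehat g_k$ immediately after $f_k$ in a paired special sequence does not violate the admissibility constraint, i.e.\ that $\mc{S}_{n_{2j-1}}$-admissibility of the special sequence $(g_1,\dots,g_p)$ upgrades to $\mc{S}_{n_{2j-1}}^f\odot\mc{A}_2$-admissibility of $(f_1,\widehat g_1,\dots,f_p,\widehat g_p)$. This is exactly the content of Remark~\ref{rsa2} and Remark~\ref{rsa2a}, and it is the reason the operation $\mc{S}_n^f\odot\mc{A}_2$ was introduced in place of a plain Schreier family. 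Once this combinatorial compatibility and the identity $\widehat{\mc{S}_n}=\mc{S}_n^f\cap[\N_{ev}]^{<\omega}$ (Lemma~\ref{lsa2}) are in hand, the tree structure of the (semi) paired special sequences, the construction of RISs and standard exact pairs, the basic inequality and its variant, and the HI argument all transfer from Sections~\ref{eocomplete}--\ref{xexishi} without essential change.
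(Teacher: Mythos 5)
Your proposal is correct and follows essentially the same route as the paper: the Schreier-family analogue of the $W_{ex}$ construction, the restriction identity $W_{ex}|_{\N_{ev}}=\widehat W_{hi}$ via Lemmas~\ref{step1a} and \ref{step2a}, and the HI criterion of Proposition~\ref{hi-1} verified through dependent paired sequences and the variant basic inequality, with Lemma~\ref{lsa2} and Remarks~\ref{rsa2}, \ref{rsa2a} supplying exactly the admissibility compatibility you single out as the one non-formal point. The paper itself presents this proof by reference to Sections~\ref{standardhi}--\ref{xexishi} ``with the obvious modifications,'' so your write-up is if anything a more explicit account of the same argument.
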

\begin{remark}
 Applying the analogue modifications we get that any standard
 asymptotic $\ell_{p}$ HI space,
$1<p<+\infty$, (see \cite{DM}),
admits an HI extension.
\end{remark}

\section{Final discussion}

We discuss some open problems that are related to this article. We begin with the one that motivated this paper and remains open in its full generality.

\begin{problem}[Pe\l czy\'nski]
\label{HI extension}
Does every HI space $X$ embed into an HI space $Y$ such that $Y/X$ is infinite dimensional? 
\end{problem}

Problem \ref{HI extension} was posed by Pe\l czynski at a conference where the first author was present, however we were unable to find it in the literature. Yet, he posed a strongly related one in \cite[Problem 3.3, page 348]{P}.

\begin{problem}[Pe\l czynski]
Does every separable HI space $X$ embed into an HI space $Y$ with a Schauder basis?
\end{problem}

It is remarkable that in the same paper \cite{P} Pe\l czy\'nski posed the problem of the existence of a $\mathcal{L}_\infty$-space that is HI, a fact that the first author was unaware of. This is why this paper was not cited in \cite{AH}, where the scalar-plus-compact problem was solved using such a space. Let us point out that the methods developed in the current paper show promise in constructing HI extensions of $\mathcal{L}_\infty$-HI spaces, such as the Argyros-Haydon space.

After sharing a preliminary version of this article to W. B. Johnson, he pointed out to us a more general form of Pe\l czy\'nski's HI extension Problem \ref{HI extension}, based on the following definition (see also \cite[page 4701]{CFG} for a formulation in the setting of twisted sums).

\begin{definition}
Let $X$ be a Banach space $Y$ be a superspace of $X$. We call $Y$ a singular extension of $X$ if $Y/X$ is infinite dimensional and the quotient map $Q:Y\to Y/X$ is strictly singular. 
\end{definition}

\begin{problem}[W. B. Johnson]
\label{ss extension}
Does every infinite dimensional separable Banach space $X$ that is not isomorphic to $c_0$ admit a singular extension $Y$?
\end{problem}

For example, as proved by Kalton and Peck, the space $X=\ell_p$, $1<p<\infty$, admits a singular extension, namely the Banach space $Z_p$ (see \cite[Theorem 6.4, page 27]{KP}). It is not hard to see that if $X$ is HI and it admits a singular extension $Y$ then $Y$ must be HI as well. Therefore, a positive answer to Problem \ref{ss extension} would imply a positive answer of Problem \ref{HI extension}, at least in the separable setting (whether every separable HI space admits a singular extension was also asked by Castillo, Ferenczi, and Gonzalez in \cite[page 4701]{CFG}).

In \cite{CFG} it is proved that the uniformly convex HI space $\mathcal{F}$ constructed by V. Ferenczi in \cite{F} admits an HI short exact sequence $0\to \mathcal{F}\to\mathcal{F}_2\to\mathcal{F}\to0$, such that $\mathcal{F}_2$ is HI, i.e., $\mathcal{F}_2$ is an HI twisted sum of $\mathcal{F}$. This is achieved by using the fact that $\mathcal{F}$ is a space resulting from complex interpolation. Note that in particular $\mathcal{F}_2$ is an HI extension of $\mathcal{F}$.

It has long been an open question whether it is possible to construct a uniformly convex HI space by directly defining a norming set instead of using complex interpolation. It is therefore relevant to ask whether it is possible to find an HI space $X$ and an HI twisted sum $Y$ of $X$ by defining a norming set $W$. The natural approach would be to try to construct a rationally convex norming set $W$ that induces an HI space $Y$ such that
\[\{f|_{\mathbb{N}_{ev}}:f\in W\} = \{f\circ \pi: f\in W\text{ with } \mathrm{supp}(f)\subset\mathbb{N}_{od}\},\]
where $\pi: \mathbb{N}_{ev}\to \mathbb{N}_{od}$ is the order preserving bijection. Then, by taking $X = \overline{\langle\{e_{2n}:n\in\mathbb{N}\}\rangle}$, the Hahn-Banach theorem would yield that $Y/X$ is isometrically isomorphic to $X$.

\end{document}